\newtheorem{Theorem}{Theorem}
\newtheorem*{TheoremSN}{Theorem}
\newtheorem*{conjecture}{Conjecture}
  \renewcommand{\theTheorem}{\Alph{Theorem}}
\def\Mapsto{%
  \xrightarrow[\raisebox{0.25 em}{\smash{\ensuremath{\sim}}}]{}%
}
\def\Q{{\mathbb Q}}
\def\Z{{\mathbb Z}}
\def\fq{{\mathbb F}}
\def\Nat{{\mathbb N}}
\def\p{{\mathfrak p}}
\def\P{{\mathfrak P}}
\def\Ell{{\mathfrak L}}
\def\N{{\rm N}}
\def\R{{\rm R}}
\def\GG{{\rm G}}
\def\K{{\rm K}}
\def\L{{\rm L}}
\def\F{{\rm F}}
\def\k{{\rm k}}
\def\MM{{\rm M}}
\def\GG{{\rm G}}
\def\U{{\rm U}}
\def\C{{\rm C}}
\def\V{{\rm V}}
\def\I{{\rm I}}
\def\A{{\rm A}}
\def\H{{\mathcal H}}
\def\G{{\mathcal G}}
\def\O{{\mathcal O}}
\def\E{{\mathcal E}}
\def\M{{\mathcal M}}
\def\SS{{\mathcal S}}
\def\Ll{{\mathcal L}}
\def\Gl{{\rm Gl}}
\def\Im{{\rm Im}}
\def\Gal{{\rm Gal}}
\def\Aut{{\rm Aut}}
\def\Cl{{\rm Cl}}
\def\GST{{{\rm G}_S^T}}
\def\Reg{{\rm Reg}}
\def\Norm{{\rm Norm}}
\def\ker{{\rm ker}}
\def\Ind{{\rm Ind}}
\def\Hom{{\rm Hom}}
\def\Sl{{\rm Sl}}
\def\Tor{{\rm Tor}}
\def\pel{p, {\rm el}}
\def\gamchapzero{{\Gamma_\sigma^{\circ}}}
\def\gamchap{{\Gamma_\sigma}}
\def\gamchappel{{\Gamma_\sigma^{p,el}}}
\def\fpm{{fixed-point-mixing modulo Frattini}}
\def\FPM{{FPMF}}
\def\sl{{\mathfrak s} {\mathfrak l}}
\def\1{{\bf 1}}
\def\r#1{r_{\Delta}(#1)}
\def\FF#1#2{{\displaystyle{\big(\frac{#1}{#2}\big) }}}
\begin{document}

\date{\today}

\author{Farshid Hajir}
\address{Department of Mathematics \& Statistics, University of Massachusetts, Amherst MA 01003, USA.}

\author{Christian Maire}

\address{Laboratoire de Mathématiques de Besançon, Université Bourgogne Franche-Comté et CNRS (UMR 6623), 16 route de Gray, 25030 Besançon cédex, France.}

\title{Analytic  Lie  extensions of number fields with  cyclic fixed points and tame ramification}

\subjclass{11R37, 22E20, 11R44}

\keywords{Uniform $p$-adic Lie groups, fixed points, tame Fontaine-Mazur conjecture, class field theory}
\thanks{\emph{Acknowledgements.} The second author would like to thank the Department of Mathematics \& Statistics at UMass Amherst for its hospitality during several visits.   }

\begin{abstract}
 Let $p$ be a prime number and $\K$ an algebraic number field. What is the arithmetic structure of Galois extensions $\L/\K$ having $p$-adic analytic Galois group $\Gamma=\Gal(\L/\K)$?  The celebrated Tame Fontaine-Mazur conjecture predicts that such extensions are either deeply ramified (at some prime dividing $p$) or ramified at an infinite number of primes.  In this work, we take up a study (initiated by Boston) of this type of question under the assumption  that $\L$ is Galois over some subfield $\k$ of $\K$ such that $[\K:\k]$ is a prime $\ell\neq p$.  Letting $\sigma$ be a generator of $\Gal(\K/\k)$, we study the constraints posed on the arithmetic of $\L/\K$ by the cyclic action of $\sigma$ on $\Gamma$, focusing on the critical role played by the fixed points of this action, and their relation to the ramification in $\L/\K$. The method of Boston works only when there are no non-trivial fixed points for this action. We show that even in the presence of arbitrarily many fixed points, the action of $\sigma$ places severe arithmetic conditions on the existence of finitely and tamely ramified uniform $p$-adic analytic extensions over $\K$, which in some instances leads us to be able to deduce the non-existence of such extensions over $\K$ from their non-existence over $\k$. 
\end{abstract}

\maketitle

\tableofcontents

\section{Introduction} \label{section-introduction}

\subsection{Background} \label{rappels-Boston0}

\theTheorem

Fix a prime $p$. The theory of pro-$p$ groups has seen major advances in the last few decades. In particular, the monumental work \cite{lazard} of Lazard  on \emph{$p$-adic analytic} pro-$p$ groups (that is to say Lie groups over the field $\Q_p$ of $p$-adic numbers) has been simplified and reinterpreted in the book \cite{DSMN} by Dixon, du Sautoy, Mann, and Segal and has made the subject more readily applied in many situations and much more accessible to a variety of non-experts. At the same time, the theory of Galois representations encodes vast amounts of arithmetic information via action of Galois groups on finite-dimensional $p$-adic vector spaces, which is to say creates continuous homomorphisms from Galois groups to the $p$-adic Lie groups $\Gl_n(\Q_p)$. In this paper, we are interested in using group-theoretical information to derive consequences for \emph{finitely and tamely} ramified Galois representations.

\

  We recall that a pro-$p$ group $\Gamma$ is called \emph{uniform} if $\Gamma^p=\langle x^p, \ x \in \Gamma \rangle$ contains the commutators $[\Gamma,\Gamma]$  of  $\Gamma$ and if moreover $\Gamma$ is torsion-free.  By Lazard \cite{lazard} (see also \cite{DSMN}), every finite-dimensional $p$-adic analytic group (closed subgroup of $\Gl_n(\Q_p)$ for some $n\geq 1$) has a finite-index (open) {uniform} subgroup.

In \cite{Boston2} and \cite{Boston3}, Boston initiated the study of the following situation (see also Wingberg \cite{Wingberg} and Maire \cite{Maire-MRL}).
We fix a uniform pro-$p$ group $\Gamma$ and assume that $\Gamma$ is realized as the
  Galois group of a \emph{tamely ramified} extension $\L/\K$, i.e. $\Gamma=\Gal(\L/\K)$, and we assume, moreover, that $\Gamma$ is equipped with a semi-simple Galois action. To be more explicit, from now on we assume that:
\begin{itemize}
\item $\K$ is a finite Galois extension of a number field $\k$ with Galois group $\Delta=\Gal(\K/\k)$ 
\item $\Delta$ is a cyclic group of prime order $\ell$ dividing $p-1$, and we fix a generator $\sigma$ of $\Delta$
\item $\L/\K$ is a finitely and tamely ramified Galois extension which is Galois over $\k$
\item $\Gamma=\Gal(\L/\K)$ is a uniform pro-$p$ group of finite dimension $d$
\end{itemize}

   \begin{TheoremSN}[Boston] 
  Under the above assumptions, if in addition
  \begin{itemize}
  \item $p$ does not divide the order of the class group of $\k$, and 
  \item $\L/\K$ is everywhere unramified,
  \end{itemize}
 then $\Gamma$ is trivial.
 \end{TheoremSN}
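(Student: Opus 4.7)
My plan is to use the hypothesis $p\nmid|\Cl(\k)|$ together with class field theory (in the guise of a genus-type argument) to force $\sigma$ to act on $\Gamma$ without non-trivial fixed points, and then to invoke Boston's method in that fixed-point-free case to conclude $\Gamma=1$.

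First I would pass to the Frattini quotient $V=\Gamma/\Gamma^p[\Gamma,\Gamma]$, an $\fq_p$-vector space of dimension $d=\dim\Gamma$. Since $\ell\mid p-1$ and $\gcd(\ell,p)=1$, $\Delta=\langle\sigma\rangle$ acts semi-simply on $V$, and the averaging idempotent $\frac{1}{\ell}\sum_{i=0}^{\ell-1}\sigma^i$ identifies invariants $V^\sigma$ with coinvariants $V_\sigma$. It therefore suffices to show $V_\sigma=0$: by Nakayama applied to the finitely generated $\Z_p$-module $\Gamma^{ab}_\sigma$ (finitely generated because $\Gamma$ is uniform), $V_\sigma=0$ promotes to $\Gamma^{ab}_\sigma=0$, and iterating through the Frattini series $\Phi^n(\Gamma)$ of the uniform group $\Gamma$ (each graded piece is $\sigma$-isomorphic to $V$ via the $p$-power map, hence has trivial fixed part) together with $\bigcap_n\Phi^n(\Gamma)=1$ (torsion-freeness) gives $\Gamma^\sigma=1$, placing us in the ``no fixed points'' situation.

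To show $V_\sigma=0$ I would argue by genus theory. Let $M\subset\L$ be the maximal subfield abelian over $\k$. Then $\Gal(M/\K)$ is the largest quotient of $\Gamma^{ab}$ on which $\sigma$ acts trivially, so $\Gal(M/\K)\cong\Gamma^{ab}_\sigma$; and the abelian group $\Gal(M/\k)$ decomposes as $\Gamma^{ab}_\sigma\oplus\Delta$, because $\Gamma^{ab}_\sigma$ is pro-$p$ while $\Delta$ has order $\ell$ prime to $p$. Let $F\subset M$ be the maximal $p$-subextension of $\k$, so $\Gal(F/\k)\cong\Gamma^{ab}_\sigma$. Since $\L/\K$ is everywhere unramified, any ramification in $M/\k$ must come from $\K/\k$, and at such a prime $\p$ the inertia in $M/\k$ has order exactly $\ell$; being an order-$\ell$ subgroup of the abelian group $\Gamma^{ab}_\sigma\oplus\Delta$, it lies entirely in the $\Delta$ factor and so fixes $F$. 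Consequently $F/\k$ is an everywhere unramified abelian $p$-extension, and $p\nmid|\Cl(\k)|$ forces $F=\k$, hence $\Gamma^{ab}_\sigma=0$.

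The final step — deducing $\Gamma=1$ from $\Gamma^\sigma=1$ — is Boston's contribution and the main obstacle. The tame/unramified hypothesis first gives a useful structural reduction: each inertia subgroup of $\Gal(\L/\k)$ at a prime of $\k$ ramifying in $\K/\k$ is cyclic of order $\ell$, and under $\Gamma^\sigma=1$ Schur--Zassenhaus renders all such subgroups $\Gamma$-conjugate to $\Delta$, so the normal closure of the inertia is the whole Galois group and the maximal unramified subextension of $\L/\k$ collapses to $\k$. To turn this into $\Gamma=1$, one exploits the $\Z_p$-Lie algebra $\mathfrak g$ of the uniform group $\Gamma$, decomposed as $\mathfrak g=\bigoplus_{i\neq 0}\mathfrak g_i$ into non-trivial $\sigma$-eigenspaces with the bracket constraints $[\mathfrak g_i,\mathfrak g_{\ell-i}]\subset\mathfrak g_0=0$, and combines this with the non-existence of non-trivial unramified $p$-extensions of $\k$ to rule out each non-trivial eigencomponent. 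I expect this last leap to be the genuine obstacle: the genus argument alone kills only the fixed component $V^\sigma$ of $V$, and it is precisely the elimination of the remaining non-trivial $\sigma$-eigencomponents that Boston's method handles in the fixed-point-free case and that the authors must revisit in the presence of fixed points.
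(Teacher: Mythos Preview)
Your reduction to the fixed-point-free situation is correct and matches the paper: the genus-type argument showing $(\Gamma^{ab})_\sigma=0$, and the lift to $\Gamma^\sigma=1$ via the $p$-power isomorphisms $\Gamma_n/\Gamma_{n+1}\simeq\Gamma/\Phi(\Gamma)$ of a uniform group, are exactly what is intended.

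The gap is in your final paragraph. You correctly sense that the remaining step ``$\Gamma^\sigma=1\Rightarrow\Gamma=1$'' is the substantive one, but your proposed route --- manipulating inertia subgroups and trying to kill the non-trivial $\sigma$-eigenspaces of the Lie algebra one by one via the triviality of unramified $p$-extensions of $\k$ --- does not work as stated. The non-trivial eigenspaces $\mathfrak g_i$ ($i\neq 0$) do \emph{not} correspond to abelian extensions of $\k$, so $p\nmid|\Cl(\k)|$ gives you no direct purchase on them; you have already extracted all the information that hypothesis contains when you killed $\mathfrak g_0$.

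The actual finish is short and purely group-theoretic. A pro-$p$ group admitting an automorphism of prime order $\ell\neq p$ with no non-trivial fixed point is nilpotent (this is the pro-$p$ analogue of the Higman--Thompson theorem; see Proposition~\ref{FPF}, or equivalently the Borel--Serre statement on the Lie algebra side that your bracket observation $[\mathfrak g_i,\mathfrak g_{\ell-i}]\subset\mathfrak g_0=0$ is gesturing toward). Separately, since $\L/\K$ is everywhere unramified, $\Gamma^{ab}$ is a quotient of the finite group $\Cl(\K)$, so $\Gamma^{ab}$ is finite; more generally $\Gamma$ is FAb. A nilpotent pro-$p$ group with finite abelianization is finite (iterate the commutator surjections $\Gamma^{ab}\otimes\Gamma^{(i)}/\Gamma^{(i+1)}\twoheadrightarrow\Gamma^{(i+1)}/\Gamma^{(i+2)}$ down the lower central series). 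Finally, a uniform group is torsion-free, so finite forces $\Gamma=1$. This is the argument the paper sketches; note that it uses the unramified hypothesis a second time (to get FAb), not just to set up the genus argument.
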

 
Here's the strategy of Boston's proof of this result.  The assumptions made in the theorem imply that $\sigma$ acts without non-trivial fixed points on $\Gamma^{\textrm{ab}}$ (to simplify the terminology, we say by way of shorthand that the action of $\sigma$ is "FPF (fixed-point-free)"). By the uniformity of $\Gamma$ the action of $\sigma$ is fixed-point-free also on $\Gamma$.  The existence of this fixed-point-free cyclic action on $\Gamma$ implies that $\Gamma$ is nilpotent (see Proposition \ref{FPF}). We recall that a group is called FAb if for all open subgroups $U$, the abelianization $U^{\textrm{ab}}$ is finite. Since $\L/\K$ is tamely ramified, $\Gamma$ is FAb.    Since $\Gamma$ is both nilpotent and FAb, it is finite; but as a uniform group, it is torsion-free, hence must be trivial. 

\medskip

In this work, we attempt to extend Boston's strategy to the case of (tamely) ramified $\L/\K$.  The key challenge is to handle the fixed points introduced by ramification because Boston's proof relies heavily on the fact the $\sigma$-action in the unramified case is fixed-point-free. We refer to \cite{Hajir-Maire} for a different application of this phenomenon in the context of Iwasawa theory where one allows wild ramification in $\L/\K$.

\subsection{A sample result} \label{rappel-boston}
In order to state our results, we need to introduce some more notation and hypotheses.  
Let $S$ be a finite set of places of $\K$ all of which are prime to $p$ (we say that the set $S$ is tame and indicate this by writing $(S,p)=1$). Since we will be working $p$-extensions in which the primes in $S$ are allowed to ramify, we further assume that for finite places  $\p\in S$, we have  $\# \O_\K/\p \equiv 1 (\bmod \ p).$  We let $\K_S$ be the maximal pro-$p$ extension of $\K$ unramified outside $S$ and we put $\GG_S=\GG_S(\K)=\Gal(\K_S/\K)$.

\medskip

Let us also take an auxiliary finite set $T$ of places of $\K$, disjoint from $S$, and define $\K_S^T$ to be the maximal pro-$p$ extension of $\K$ unramified outside $S$ and in which the places in $T$ split completely.  We put  $\GG_S^T=\GG_S^T(\K)=\Gal(\K_S^T/\K)$.  We note then that $\K_S^T \subset \K_S$, that $\GG_S \twoheadrightarrow \GG_S^T$ and that $\K_S^\emptyset= \K_S$.

\medskip

Recall that $\K$ is a number field admitting a non-trivial automorphism $\sigma$ of prime order $\ell$ dividing $p-1$, and $\k=\K^\sigma$ is the fixed field of $\Delta=\langle \sigma \rangle$.  We will assume that the sets $S$ and $T$ described above are stable under the action of $\sigma$.  Thus, the extension  $\K_S^T/\k$ is Galois and $\sigma$ acts on $\GG_S^T=\Gal(\K_S^T/\K)$.

\begin{defi}
Consider a continuous Galois representation $\rho : \GG_S^T(\K) \rightarrow \Gl_n(\Q_p)$, and let $\L$ be the subfield of $\K_S^T$ fixed by $\ker(\rho)$ so that the image $\Gamma$ of $\rho$ is naturally identified with 
$\Gal(\L,\K)$.  We say that $\rho$ (or $\Gamma$) is \emph{$\sigma$-uniform} if
we have (i)  $\Gamma=\Gal(\L/\K)$ is uniform; and (ii) $\L/\k$ is Galois, i.e. the action of $\sigma$ on $\GG_S^T(\K)$ induces an action on $\Gamma$.  
\end{defi}

For a finitely generated pro-$p$ group $G$, recall that closed subgroup generated by $p$th powers and commutators, $\Phi(G)=G^p[G,G]$, is the \emph{Frattini subgroup} of $G$; it is a characteristic subgroup of finite index.  The \emph{Frattini quotient} ${G}^{\pel}:=G/\Phi(G)$ is the maximal abelian exponent $p$ quotient of $G$.
The method of Boston described in \S \ref{rappels-Boston0} in the unramified case carries over to $\GG_S^T$ without any trouble only if the action of $\sigma$ on $\Gamma$ is FPF.  More precisely, if the action of $\sigma$ on  $\GG_S^T/\Phi(\GG_S^T)$ is fixed-point-free, then any $\sigma$-uniform representation of $\GG_S^T$ has trivial image. As indicated above, we try to extend the method by introducing fixed points that result from allowing tame ramification.  We show that even in the presence of non-trivial fixed points, all $\sigma$-uniform quotients of $\GG_S^T$ are trivial as long as the "new" ramification is restricted to the subgroup generated by the fixed points.   In \S \ref{section-presentation}, we will present our results in greater generality, but we first illustrate them by presenting a special case for the well-known uniform and FAb pro-$p$ group $\Sl_2^1(\Z_p):=\ker \big( \Sl_2(\Z_p) \rightarrow \Sl_2(\fq_p)\big)$  of dimension~$3$.

\begin{TheoremSN}
Suppose $\K/\k$ is a quadratic extension with Galois group $\Delta=\langle \sigma \rangle$ such that the odd prime $p$ does not divide the class number of $\k$. Let $\Gamma= \Sl_2^1(\Z_p)$. Suppose for all finite sets $\Sigma$ of places of $\k$ with $(\Sigma,p)=1$, there is no continuous Galois representation  $G_\Sigma(\k) \twoheadrightarrow \Gamma$. Then 
there exist infinitely many disjoint finite sets $S$ and $T$ of primes of $\K$, with $(S,p)=1$ and $|S|$ arbitrarily large, such that 
\begin{enumerate}
\item[(i)] $\GG_S^T(\K)$ is infinite,
\item[(ii)]  ${\GG_S^T(\K)}^{\pel}$ has $|S|$ independent fixed points under the action of $\sigma$,
\item[(iii)] there is no continuous $\sigma$-uniform representation 
$\rho:\GG_S^T(\K) \twoheadrightarrow \Gamma$.
\end{enumerate}
\end{TheoremSN}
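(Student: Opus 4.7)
The strategy is to apply the paper's general FPMF (fixed-point-mixing modulo Frattini) machinery to the uniform FAb group $\Gamma = \Sl_2^1(\Z_p)$ of dimension $3$, after engineering $S$ and $T$ so that all new $\sigma$-fixed points originate from tame inertia. Using Chebotarev in the compositum of $\K$ with $\k(\mu_p)$, I would select arbitrarily many primes $\p_1, \ldots, \p_n$ of $\k$ which are inert in $\K/\k$ and satisfy $\N\p_i \equiv 1 \pmod p$; let $\P_i$ be the unique prime of $\K$ above $\p_i$ and $S = \{\P_1, \ldots, \P_n\}$. Each $\P_i$ is pointwise $\sigma$-fixed, the decomposition group in $\Delta$ is all of $\Delta$, and $\sigma$ acts on the tame inertia generator $\tau_i$ by the local cyclotomic character, which is trivial modulo $p$ by the norm congruence. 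Hence the $\tau_i \in \GG_S(\K)^{\pel}$ are $\sigma$-fixed, and the hypothesis $p \nmid h(\k)$ (via genus theory) rules out unramified contributions to the fixed subspace, so the $\tau_i$ span a $\sigma$-fixed space of dimension $|S|$, yielding (ii).

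For (i), taking $|S|$ large enough, the tame Golod-Shafarevich inequality forces $\GG_S(\K)$, and hence $\GG_S^T(\K)$ for any admissible $\sigma$-stable tame $T$ disjoint from $S$ (even $T = \emptyset$ suffices), to be infinite. Iterating Chebotarev on disjoint ray classes then produces infinitely many disjoint valid pairs $(S,T)$ satisfying (i) and (ii) simultaneously.

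For (iii), suppose a $\sigma$-uniform surjection $\rho : \GG_S^T(\K) \twoheadrightarrow \Gamma$ existed. The involution of $\Gamma$ induced by $\sigma$ acts semisimply on $\Gamma^{\pel} \cong \fq_p^3$ (as $p$ is odd), decomposing $\Gamma^{\pel} = \Gamma^+ \oplus \Gamma^-$. By construction the images $\rho(\tau_i)$ all lie in $\Gamma^+$, and the $-$ part receives no unramified contribution, so the entire ramification of $\rho$ is \emph{supported by the fixed points} in the sense of the paper. Applying the FPMF descent theorem, $\rho$ descends to a continuous surjection $\tilde\rho : \GG_\Sigma(\k) \twoheadrightarrow \Gamma$ for some finite tame $\Sigma$ of $\k$, directly contradicting the standing hypothesis on $\k$.

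The main obstacle is the descent step in (iii): one must promote a $\sigma$-equivariant $\rho$ to a genuine Galois representation of $\GG_\Sigma(\k)$. The relevant cohomological obstruction lies in $H^2(\Delta, Z(\Gamma))$, which vanishes because $\Sl_2^1(\Z_p)$ has trivial center for odd $p$ and $|\Delta|=2$ is coprime to $p$; combined with the FAb property forced by tame ramification, this yields the descent. A secondary technical point is arranging Golod-Shafarevich, fixed-point independence, and infinitely many disjoint $(S,T)$ all at once, which is handled uniformly by Chebotarev in a sufficiently large auxiliary cyclotomic-class-field tower.
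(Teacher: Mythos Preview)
Your argument has a genuine gap in step (iii): there is no ``FPMF descent theorem'' in the paper, and the mechanism you describe is not how FPMF is used. The paper's logic is a dichotomy. If $\sigma$ acts \emph{trivially} on $\Gamma$, then $\Gal(\L/\k)\simeq\Gamma\times\langle\sigma\rangle$ and one does descend to $\k$ (this is the content of Corollary~\ref{coro1}); your $H^2(\Delta,Z(\Gamma))$ discussion is relevant only here and in fact unnecessary, since Schur--Zassenhaus already gives the splitting. If $\sigma$ acts \emph{non-trivially}, then by Proposition~\ref{prop-sl2} the action is FPMF, and the paper obtains a contradiction not by descent but by comparing $\fq_p[G]$-module structures: the arithmetic of the carefully chosen primes forces $G=\Gamma/\gamchap$ to act trivially on the inertia images generating $\gamchappel$, while FPMF says $G$ acts non-trivially on $\gamchappel$. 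You never engage this second case on its own terms.

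Your choice of inert primes does produce $\sigma$-fixed inertia in $\GG_S(\K)^{\pel}$, so (ii) is plausible. But this is not enough for the FPMF contradiction. The paper's primes are selected via the Gras--Munnier theorem inside a governing Kummer extension $\F^T/\L'$, using free $\fq_p[\G]$-submodules of $T$-units (Theorem~\ref{constante-liberte}); this guarantees that at the level of the intermediate field $\F=\L^{\gamchap}$ the group $G=\Gal(\F/\K)$ acts \emph{trivially} on each inertia generator (Lemma~\ref{lemme-technique1} and Proposition~\ref{inertie}). Your inert primes carry no such information about the $G$-action at level $\F$; they only control the $\sigma$-action at level $\K$. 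Relatedly, the set $T$ is not arbitrary: it must be large enough for $T$-unit freeness and must kill $\Cl^T(\K^H)$ (Proposition~\ref{ramification-pointfixe}) so that $\gamchap$ is automatically supported at $S'$. Taking $T=\emptyset$, as you suggest, forfeits both of these ingredients.
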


\begin{rema}
The above theorem holds with $\Gamma=\Sl_n^1(\Z_p)$ for arbitrary $n$, under the additional assumption that the action of the automorphism $\sigma$ on $\Gamma$ corresponds to conjugation by a matrix of order~$2$ in $\Gl_n(\Z_p)$. For more details, see \S \ref{sln}.
\end{rema}

\subsection{Motivation}\label{FontMaz}
An important and vast "modularity" conjecture forms the motivation for the study begun by Boston and continued here. 
In \cite{FM}, Fontaine and Mazur propose a characterization of all Galois representations which arise from the action of the absolute Galois group of $\K$ on Tate twists of \'etale cohomology groups of algebraic varieties defined over $\K$: namely they predict that these are precisely the representations which are ramified at a finite number of primes of $\K$ and are potentially semistable at the primes dividing $p$. If we
restrict our attention to $p$-adic representations which are finitely and tamely ramified, we obtain the following consequence (Conjecture 5a of \cite{FM}) of this characterization (see Kisin-Wortmann \cite{Kisin-Wortmann} for the details). 

\begin{conjecture}[Tame Fontaine-Mazur Conjecture]\label{conjectureTFM}
For  a finite set $S$ of primes of $\K$ of residue characteristic not equal to $p$, and $n\geq 1$, any continuous Galois representation $\rho:\GG_S \longrightarrow \Gl_n(\Q_p)$ has finite image. 
\end{conjecture}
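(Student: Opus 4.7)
The plan is to leverage the Boston--Hajir--Maire descent strategy developed in this paper. The first step is a Lazard-type reduction: since $\rho(\GG_S)$ is a closed pro-$p$ subgroup of $\Gl_n(\Q_p)$, it contains an open uniform subgroup. Replacing $\K$ by the finite subextension of $\K_S$ cut out by the preimage of this subgroup, and $S$ by the primes above $S$ (still of residue characteristic $\neq p$), we may assume the image $\Gamma=\rho(\GG_S)$ is itself uniform of some dimension $d\geq 0$. Our task is to show $d=0$. Since $S$ is tame, the extension $\L/\K$ fixed by $\ker\rho$ is tamely and finitely ramified, whence $\Gamma=\Gal(\L/\K)$ is FAb: every open subgroup has finite abelianization. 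Thus the conjecture reduces to the group-theoretic claim that no infinite uniform FAb pro-$p$ group can occur as the Galois group of a finitely and tamely ramified pro-$p$ extension of any number field.

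The second step is to arrange a semi-simple Galois action via descent. After possibly enlarging the base field to introduce a suitable symmetry, I would select a subfield $\k\subset\K$ of prime index $\ell\mid p-1$ with generator $\sigma$ of $\Gal(\K/\k)$, such that $S$ is $\sigma$-stable and $\L/\k$ is Galois. This equips $\Gamma$ with a $\sigma$-action, making it $\sigma$-uniform. If the induced action of $\sigma$ on the Frattini quotient $\Gamma/\Phi(\Gamma)$ is fixed-point-free, Proposition \ref{FPF} forces $\Gamma$ to be nilpotent; a nilpotent FAb torsion-free uniform pro-$p$ group is necessarily trivial, contradicting $d\geq 1$. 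When non-trivial $\sigma$-fixed points are present but are confined to the subspace of $\Gamma^{\rm ab}\otimes\fq_p$ generated by tame inertia at primes of $S$ -- the ``ramification \sfp'' regime governing the main theorems of this paper -- the refined $\sigma$-uniform analysis developed here still forces $\Gamma=1$.

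The main obstacle, and the reason the conjecture remains open, is producing such a descent for an arbitrary input $(\K,S,\rho)$. Number fields are not guaranteed to contain subfields of index dividing $p-1$, and when one enlarges $\K$ to manufacture $\sigma$ (for example by adjoining roots of unity or passing to a quadratic superfield), one typically loses control of the $\sigma$-module structure on the Frattini quotient of the new $\GG_S$: fresh fixed points, unrelated to tame inertia at primes of $S$, can appear coming from class group contributions, from local Euler characteristic defects, or from embedding problems in the ascent. The decisive step one would need to establish is therefore a \emph{uniform} descent statement of the form: for every tamely and finitely ramified uniform FAb quotient $\Gamma$ of some $\GG_S(\K)$, there exists a finite extension $\K'/\K$ and an order-$\ell$ descent $\K'/\k'$ with $\ell\mid p-1$, realizing $\Gamma$ as a $\sigma$-uniform quotient in the \fpm{} regime treated here.

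I expect this descent step to be the decisive obstacle: the group-theoretic machinery of the paper (the interplay of uniformity, FAb, $\sigma$-action and the fixed-point set supported by inertia) is ready to finish the argument, but producing its input in full generality is precisely what obstructs a proof. This is why the present paper, and its predecessors, can only establish the conjecture in special cases -- such as the $\Sl_2^1(\Z_p)$ sample result above, where the descent is given to us as a hypothesis -- rather than unconditionally.
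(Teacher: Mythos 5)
This statement is a \emph{conjecture}: the paper offers no proof of it, and indeed states explicitly that the Tame Fontaine--Mazur Conjecture ``in general appears to be completely out of reach.'' Your proposal is therefore not, and does not claim to be, a proof --- it is a strategy outline with a self-acknowledged missing step, and on that level your assessment is essentially accurate. The reduction to a uniform FAb image via Lazard, the observation that a fixed-point-free $\sigma$-action would force nilpotence and hence (by FAb plus torsion-freeness) triviality, and the identification of the descent step as the genuine obstruction are all consistent with how the paper frames the problem.

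Two caveats on your description of what the machinery ``is ready to finish.'' First, even granting a descent $\K/\k$ with $\ell \mid p-1$, the paper's main theorems do not rule out $\sigma$-uniform quotients for an \emph{arbitrary} tame set $S$: they construct, via Kummer theory and Chebotarev, \emph{special} sets $S$ (and auxiliary split sets $T$) whose Frobenius classes are prescribed in a governing field, so that the inertia subgroups land in a controlled position; the non-existence statement then holds only under the additional hypotheses that the action is {\fpm} and that $\gamchap$ is {\sfp} (supported at the chosen primes). The conjecture hands you $S$; the paper chooses it. Second, the conclusion in the fixed-point regime is not ``$\Gamma=1$'' but rather a contradiction between the $\fq_p[G]$-module structure of $\gamchap/\Phi(\gamchap)$ (on which $G=\Gamma/\gamchap$ must act non-trivially by the {\FPM} hypothesis) and the trivial $G$-action on the arithmetic module surjecting onto it --- a contradiction available only for those engineered $S$. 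So the gap is larger than a missing descent statement alone: one would also need to remove the dependence on the Chebotarev-constructed ramification and the {\FPM}/support hypotheses. With that qualification, your verdict that the statement remains a conjecture is correct.
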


The philosophy of this conjecture rests on the idea that the eigenvalues of Frobenius (under a finitely and tamely ramified $p$-adic representation $\rho$) ought to be roots of  unity.  Consequently, the image of such a representation is solvable, and hence finite by class field theory (because it is also FAb).  We refer the reader to  \cite{Kisin-Wortmann} for further details.

\

One immediately checks the Conjecture for $n=1$ by Class Field Theory.  For $n>1$, on the other hand,  the Tame Fontaine-Mazur Conjecture in general appears to be completely out of reach, and the evidence for it for $n>2$ is rather preliminary. However, for $\K=\Q$, and $n=2$, the pioneering methods of Wiles and Taylor-Wiles can be used to show that many types of 2-dimensional representations do come from algebraic geometry (in fact from weight one modular forms) and hence have finite image.  As a partial list of such results, we refer the reader to Buzzard-Taylor \cite{BT},  Buzzard \cite{Buzzard}, Kessaei \cite{Kassaei1}, Kisin \cite{Kisin}, Pilloni \cite{Pilloni}, Pilloni-Stroh \cite{Pilloni-Stroh}. 

\

Recalling that every finitely generated $p$-adic analytic group has a uniform open subgroup, and that a uniform group of dimension $1$ or $2$ has quotient isomorphic to $\Z_p$,  the Tame Fontaine-Mazur conjecture can be rephrased as follows.

\begin{conjecture}[Tame Fontaine-Mazur Conjecture -- Uniform Version]\label{conjectureFM} 
Suppose $\K$ is a number field, and $\Gamma$ is a uniform pro-$p$ group of dimension $d>2$, hence infinite.  Then there does not exist a finitely and tamely ramified Galois extension $\L/\K$ with Galois group $\Gamma=\Gal(\L/\K)$. 
\end{conjecture}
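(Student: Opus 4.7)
The conjecture is a major open problem, so any proof proposal can only be an aspirational strategy built on the paper's methods. Suppose for contradiction that $\L/\K$ were a counterexample: a finitely and tamely ramified Galois extension with $\Gamma := \Gal(\L/\K)$ uniform of dimension $d > 2$, hence infinite.

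The plan is to reduce to Boston's setup via descent. First, I would seek a subfield $\k \subset \K$ such that $\K/\k$ is cyclic of prime degree $\ell$ dividing $p-1$, and arrange $\L/\k$ to be Galois (replacing $\L$ by its Galois closure over $\k$ if needed; the resulting quotient remains uniform on an open subgroup and stays FAb because tameness is preserved). A generator $\sigma$ of $\Gal(\K/\k)$ then acts on $\Gamma$, and because $\ell \mid p-1$ the action is semisimple on the Frattini quotient $\Gamma^{\pel}$.

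Next, analyze the $\sigma$-action on $\Gamma^{\textrm{ab}}$. In the fixed-point-free case, Boston's argument as recalled in \S \ref{rappels-Boston0} finishes immediately: uniformity propagates FPF from $\Gamma^{\textrm{ab}}$ to $\Gamma$ itself, which forces nilpotence (via Proposition \ref{FPF}), while tameness gives the FAb property; so $\Gamma$ is both nilpotent and FAb, hence finite, and being torsion-free it is trivial, a contradiction. If non-trivial fixed points appear, I would invoke the paper's extension of Boston's method: show that the $\sigma$-fixed part of $\Gamma^{\textrm{ab}}$ comes from controllable tame ramification (as in the sample theorem for $\Sl_2^1(\Z_p)$), and deduce the non-existence of $\Gamma$ as a quotient of $\GG_S^T(\K)$ from its non-existence over $\k$. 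One would then induct on some complexity measure such as $[\K : \Q]$ or the dimension $d$.

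The principal obstacle, and the reason the conjecture is open, is that this descent is not available in general. One needs simultaneously: the existence of a suitable subfield $\k$ (failing already when $\K = \Q$); control of the $p$-part of the class group of $\k$; and a guarantee that the new fixed points from tame ramification at $S$ are contained in the image of the inertia subgroups generated by them. None of these is automatic, and it is precisely the paper's achievement to identify families of situations where all three conditions hold and the descent succeeds, substantially enlarging Boston's unramified case.
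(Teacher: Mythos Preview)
This statement is a \emph{conjecture}, not a theorem: the paper does not prove it and explicitly says the Tame Fontaine--Mazur Conjecture ``in general appears to be completely out of reach.'' You correctly recognize this and present your proposal as an aspirational strategy rather than a proof, which is the honest thing to do. Your outline does faithfully mirror the descent philosophy the paper pursues in the special cases of Theorems~\ref{maintheo1bis} and~\ref{maintheorem0}: introduce a cyclic action of prime order $\ell\mid p-1$, exploit Boston's argument in the FPF case, and try to control fixed points via ramification in the non-FPF case.

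That said, even as a strategy there are concrete steps that would fail. First, the Galois-closure move is not innocuous: replacing $\L$ by its Galois closure over $\k$ in general destroys the uniformity of $\Gamma$, and passing to an open uniform subgroup of the new Galois group replaces $\K$ by a larger field, so you are no longer studying the original counterexample and the ``induction on $[\K:\Q]$'' goes the wrong way. The paper avoids this entirely by \emph{assuming} from the outset that $\L/\k$ is Galois (this is built into the definition of $\sigma$-uniform). Second, your fixed-point step appeals to ``the paper's extension of Boston's method'' as if it were unconditional, but the paper's results require carefully engineered hypotheses: the action must be FPMF (Definition~\ref{defi-fpm}), the class group of $\k$ must be $p$-trivial, and the sets $S,T$ must be chosen via Chebotarev to make the inertia support $\Gamma_\sigma$. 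None of these is automatic for an arbitrary counterexample, and the paper gives no mechanism for forcing them. Third, as you yourself note, the base case $\K=\Q$ admits no such descent at all. So the proposal is a fair summary of the paper's \emph{philosophy}, but it is not a proof strategy that could be completed with current tools, and the paper makes no claim otherwise.
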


In the simplest non-trivial case, one can take $\K=\Q$ and $\Gamma=\Sl_2^1(\Z_p)$.  We must then show that  $\Sl_2^1(\Z_p)$ cannot be realized as the Galois group of a finitely and tamely ramified Galois extension over $\Q$.   Given the recent spectacular breakthroughs listed above, perhaps the current methods will one day prove sufficient to establish this special case of the Tame Fontaine-Mazur conjecture, but at the moment the theory of even Galois representations is still under-developed by comparison with odd ones. We should emphasize that in this work, we rely exclusively on group-theoretical methods.  However, as automorphic methods approach a full proof of the tame Fontaine-Mazur conjecture (for 2-dimensional representations at least) over $\Q$, one can use the group-theoretical techniques discussed here to deduce some cases of the Tame Fontaine-Mazur conjecture over quadratic fields from known cases over $\Q$.

\medskip


\section{Presentation of  results} \label{section-presentation}

\subsection{A key definition}\label{fpm} Recall that $\Gamma$ is a uniform pro-$p$ group equipped with the action of an automorphism $\sigma$ of prime order $\ell ~|~ p - 1$. 
We denote by  $$\gamchapzero=\langle \gamma \in \Gamma, \ \sigma(\gamma)=\gamma \rangle,$$ the closed subgroup of $\Gamma$ generated by the fixed points of $\Gamma$ under the action of $\sigma$, and let $\gamchap$ be its normal closure  in $\Gamma$. Let $G:=\Gamma/\gamchap$.

\begin{defi} \label{defi-fpm}
With the above assumptions, the action of $\sigma$ on $\Gamma$ is said to be \emph{\fpm} (\emph{\FPM}) if $G=\Gamma/\Gamma_\sigma$ acts non-trivially on $\gamchap/\Phi(\gamchap)$. 
\end{defi}

This notion will be essential for our work; its relevance is explained at the end of \S \ref{strat-proof}. Let us give two examples that we will study in section \ref{section-examples} and will be important to illustrate our results.

\begin{exem}[See \S \ref{sl2}] If a FAb and  uniform pro-$p$ group  of dimension $3$ admits non-trivial action by  an automorphism $\sigma$ of order $2$, then this action  is \fpm.
Thus, any involution which acts non-trivially on the linear group $\Sl_2^1(\Z_p):=\ker\big(\Sl_2(\Z_p) \rightarrow \Sl_2(\fq_p)\big)$  is \fpm.
\end{exem}

\begin{exem}[See \S \ref{sln}]
More generally, for the FAb pro-$p$ group $$\Sl_n^1(\Z_p):=\ker\big(\Sl_n(\Z_p)\rightarrow \Sl_n(\fq_p)\big)\qquad n\geq 2,$$ and the automorphism  $\sigma_A$ coming from conjugation by a matrix $A\in \Gl_n(\Z_p)$ of order~$2$, the action of $\sigma_A$ is \fpm.
\end{exem}

\subsection{When $\sigma$ is of order $2$}

The case where the automorphism $\sigma$ is an involution, i.e. $\ell=2$, is particularly interesting.  Let us begin with a definition.

\begin{defi}
Let $\Gamma$ be a uniform group of dimension $d$, which also then equals the $p$-rank of $\Gamma$, i.e.  $\Gamma/\Phi(\Gamma)$ is a $d$-dimensional vector space over $\fq_p$. Suppose 
 $\sigma \in \Aut(\Gamma)$ has order $2$.  If  the multiplicity of the trivial character in the action of $\sigma$ on $\Gamma/\Phi(\Gamma)$ is $r$, we say that the action of $\sigma$ on $\Gamma$ is of type $(r,d-r)$ and write $t_{\sigma}(\Gamma)=(r,d-r)$.  We will say that the type of action of automorphisms of order $2$ on $\Gamma$ is constant if for all  $\sigma, \tau \in \Aut(\Gamma)$ of order $2$, $t_\sigma(\Gamma)=t_\tau(\Gamma)$. 
\end{defi}

\medskip

Under our blanket assumption that $\sigma$ is non-trivial, it is easy to see that $t_\sigma(\Gamma)\neq (d,0)$.  In \cite{Boston2} and \cite{Boston3}, the assumption is always that $t_\sigma(\Gamma)=(0,d)$.   In this work, we consider the more general intermediate types $t_\sigma(\Gamma)=(r,d-r)$ with $0< r < d$, by allowing tame ramification.

 \medskip
 
 The result we want to present will involve the Hilbert $p$-class field $\K^H$ of $\K$ so we recall this concept. Recalling that the prime $p$ has been throughout fixed, we let $\Cl(\K)$ be the $p$-Sylow subgroup of the ideal class group of $\K$ and $\K^H$ the maximal abelian unramified $p$-extension of $\K$.   The Artin map gives a canonical isomorphism $\Cl(\K) \to \Gal(\K^H/\K)$.  More generally, if $S$ is a finite tame set of places of $\K$ and $T$ is another finite set of places disjoint from $S$, $\Cl_S^T$ will be the $p$-Sylow subgroup of the $T$-ray class group of $\K$ mod $S$, which corresponds via the Artin map to $\Gal(\K_S^T/\K)$.
 
 As before, put $\GG_S^T=\GG_S^T(\K)=\Gal(\K_S^T/\K)$.

\begin{Theorem} \label{maintheo1bis} Let $p>2$ and let $s \in \Nat$.
Let $\K/\k$ be a quadratic extension and suppose that  $p$ does not divide $|\Cl(\k)|$.
Let $T$ be a finite set of places of $\k$ totally split in $\K^H/\K$ of large enough cardinality (see Theorem \ref{constante-liberte} for a more exact statement), and such that $\Cl^T(\K^H)$ is trivial.
Then there exist $s$ pairwise disjoint positive-density sets $\SS_i$, $i=1,\cdots,s$ of prime ideals $\p\subset \O_\k$ of $\k$ such that for finite sets  $S=\{\p_1,\cdots, \p_s\}$, with $\p_i \in \SS_i$, we have
\begin{enumerate}
\item[(i)] under the action of $\sigma$, there are $s$ independent fixed points in $\GG_S^T/\Phi(\GG_S^T)$;
\item[(ii)] there is no continuous representation $\rho : \GG_S^T \rightarrow \Gl_m(\Q_p)$ with $\sigma$-uniform image $\Gamma$ which is \fpm.
\end{enumerate}
\end{Theorem}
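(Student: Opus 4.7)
The plan is to combine Boston's nilpotency argument, applied to the outer quotient $G=\Gamma/\gamchap$ on which $\sigma$ acts fixed-point-freely, with the arithmetic hypotheses on $\k$ to force $G=1$, and then to extract a contradiction from the \FPM\ condition. For part (i), I would build the $\SS_i$ inductively: given $\p_1,\dots,\p_{i-1}$, I would apply Chebotarev's theorem in the compositum of $\K^H$ with suitable Kummer extensions encoding the Frattini quotient $\GG_{\{\p_1,\dots,\p_{i-1}\}}^T/\Phi(\GG_{\{\p_1,\dots,\p_{i-1}\}}^T)$, obtaining a positive-density set $\SS_i$ of primes $\p$ of $\k$ with prescribed Frobenius. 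When $\p$ is inert in $\K/\k$, the unique prime above it is $\sigma$-stable, so its tame inertia generator (available because $N\p^2\equiv 1\pmod p$) is already $\sigma$-fixed; when $\p$ splits as $\P\bar\P$, the sum of the two inertia generators is $\sigma$-fixed. In either case $\p$ contributes a non-trivial fixed vector in $\GG_S^T/\Phi(\GG_S^T)$, and the Chebotarev conditions guarantee linear independence of the $s$ vectors so produced.

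\medskip

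For part (ii), assume for contradiction that $\rho:\GG_S^T\to\Gl_m(\Q_p)$ is $\sigma$-uniform with image $\Gamma=\Gal(\L/\K)$ which is \FPM. By the analysis of (i), every inertia at a prime of $\K$ above some $\p_i$ lies in $\gamchapzero\subseteq\gamchap$; hence $\L^{\gamchap}/\K$ is unramified outside $T$ (with $T$ still totally split), making $G=\Gamma/\gamchap$ a quotient of $\GG_\emptyset^T(\K)$ on which $\sigma$ acts, by construction, without non-trivial fixed points on $G/\Phi(G)$. Since the order of $\sigma$ is coprime to $p$, this FPF property lifts from $G/\Phi(G)$ to $G$ itself, so by Proposition~\ref{FPF} the group $G$ is nilpotent; moreover $G$ inherits the FAb property of $\Gamma$ (which comes from tame ramification of $\L/\K$), so $G$ is finite. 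The hypotheses $p\nmid|\Cl(\k)|$, $T$ split in $\K^H$, and $\Cl^T(\K^H)=1$ imply, via reflection/genus theory for $\K/\k$ together with the collapse of the $T$-split $p$-class field tower of $\K$ at the second step, that the $\sigma$-FPF part of $\GG_\emptyset^T(\K)$ vanishes. Consequently $G=1$, i.e.\ $\Gamma=\gamchap$; but then $G=1$ acts trivially on $\gamchap/\Phi(\gamchap)$, contradicting the \FPM\ hypothesis.

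\medskip

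The main obstacle is the step ruling out $G$: one must combine $p\nmid|\Cl(\k)|$, the splitting of $T$ in $\K^H$, and $\Cl^T(\K^H)=1$ in order to show, via reflection/genus theorems for the quadratic extension $\K/\k$ and the triviality of the second level of the $T$-split $p$-class field tower of $\K$, that $\GG_\emptyset^T(\K)$ has no non-trivial FPF quotient under $\sigma$. Tracking the $\sigma$-action on $\Cl^T(\K)_p$ through this two-step tower is where most of the arithmetic subtlety lies: the role of $T$ is precisely to collapse the tower, while the role of $p\nmid|\Cl(\k)|$ is to make the FPF part of $\Cl^T(\K)_p$ controllable via $\Cl(\k)_p$.
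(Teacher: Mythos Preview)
Your argument for (ii) has a genuine gap: the conclusion $G=\Gamma/\gamchap=1$ is simply false in general under the stated hypotheses. You correctly observe that $\F:=\L^{\gamchap}$ is unramified and $T$-split over $\K$, so $G$ is a quotient of $\GG_\emptyset^T(\K)$. Since $T$ is totally split in $\K^H/\K$ and $\Cl^T(\K^H)$ is trivial, the $T$-split unramified $p$-tower of $\K$ terminates at $\K^H$, hence $\GG_\emptyset^T(\K)\simeq\Gal(\K^H/\K)\simeq\Cl(\K)_p$. The assumption $p\nmid|\Cl(\k)|$ tells you that $\sigma$ acts by $-1$ on $\Cl(\K)_p$, so \emph{every} quotient of $\GG_\emptyset^T(\K)$ is $\sigma$-FPF; there is no ``vanishing of the FPF part''. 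The theorem is stated precisely in situations where $\Cl(\K)_p$ is non-trivial (that is the whole point of passing from $\k$ to $\K$), so $G$ can perfectly well be a non-trivial abelian $p$-group. Your attempted contradiction $\Gamma=\gamchap$ therefore does not follow.

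The paper's argument is different and does not attempt to kill $G$. Instead it uses the freedom in the \emph{choice} of the primes $\p_i$. The sets $\SS_i$ are constructed not by a generic inductive Chebotarev as you propose, but via a specific Kummer/Chebotarev argument using Theorem~\ref{constante-liberte}: one finds enough free $\fq_p[\G]$-submodules in $\O_{\K^H}^T/(\O_{\K^H}^T)^p$ (this is where ``$|T|$ large enough'' enters) and picks the $\p_i$ so that, at the level of the intermediate field $\F\subset\K^H$, the inertia module $\I^T(S,\F)\subset\GG_S^T(\F)^{\pel}$ is isomorphic to $(\fq_p)^{\oplus s}$ as a $G$-module, i.e.\ $G$ acts \emph{trivially} on it. The hypothesis $\Cl^T(\K^H)=1$ is then used, via Proposition~\ref{ramification-pointfixe} and Lemma~\ref{non-ram}, to show that $\gamchap$ is generated by the inertia at $S$ (not merely that it contains it). Combining these, the natural $G$-surjection $\I^T(S,\F)\twoheadrightarrow\gamchap^{\pel}$ forces $G$ to act trivially on $\gamchap^{\pel}$, contradicting the \FPM\ hypothesis --- regardless of whether $G$ itself is trivial. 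Your outline for (i) also misses this: the $\SS_i$ must be tailored to control the $G$-module structure of the inertia in $\F$, not just to produce $\sigma$-fixed vectors in $\GG_S^T(\K)^{\pel}$.
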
 

\begin{rema}
The key point for obtaining (ii) above is as follows. The choices of $S$ and $T$ are made so that $(\GG_S)^{\pel}=(\GG_S^T)^{\pel}$. The action of $\sigma$ on $\GG_S^T$ has type $(s,d_p \Cl(\K))$, so we can rule out the existence of $\rho$ with $\sigma$-uniform image $\Gamma$ if $t_\sigma(\Gamma)$ is not compatible with $t_\sigma(\GG_S^T)$. Such incompatibility can be caused at the level of the subgroup generated by the fixed points of $\sigma$.   Typically, condition (i) of Theorem 
 \ref{maintheo1bis} assures an incompatibility for certain automorphisms of order $2$.  We will see that when $\sigma$ has order $2$, the contradiction can be detected already at the level of the Hilbert $p$-class field of $\K$.
\end{rema}

When $\Gamma$ has constant type for all order 2 automorphisms, the following interesting situation arises.

\begin{coro}\label{coro1} 
If the uniform group $\Gamma$ of dimension $d>0$ is such that for all $\sigma \in \Aut(\Gamma)$ of order $2$, $\sigma$ is \fpm, then under the conditions of Theorem \ref{maintheo1bis}, all continuous representations 
 $\rho : \GG_S^T \rightarrow \Gl_m(\Q_p)$ with  $\sigma$-uniform image $\Gamma$ come from $\k$. In particular, such a representation does not exist if either
 \begin{enumerate}
\item[-]The Tame Fontaine-Mazur conjecture holds for  $\k$, or
\item[-]  $d > |S|$.
\end{enumerate}
\end{coro}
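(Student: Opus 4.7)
The plan is to dichotomize on whether the induced action of $\sigma$ on the image $\Gamma$ is trivial. If $\sigma$ acts non-trivially on $\Gamma$, then since $\ell=2$ it has order $2$ in $\Aut(\Gamma)$, and by the standing hypothesis of the corollary it is \fpm{} on $\Gamma$; part (ii) of Theorem \ref{maintheo1bis} then directly forbids the existence of $\rho$. So I may assume from here on that $\sigma$ acts trivially on $\Gamma$.

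Under this assumption, $\rho(\sigma g)=\rho(g)$ for all $g\in\GG_S^T(\K)$, so $\rho$ factors through the maximal quotient of $\GG_S^T(\K)$ on which $\sigma$ acts trivially. Letting $\L$ be the fixed field of $\ker\rho$ in $\K_S^T$, the exact sequence $1\to\Gamma\to\Gal(\L/\k)\to\Delta\to 1$ has trivial $\Delta$-action and splits by Schur--Zassenhaus (since $|\Delta|=2$ is coprime to the pro-$p$ order of $\Gamma$), giving $\Gal(\L/\k)\cong\Gamma\times\Delta$. Setting $\L_0:=\L^\Delta$, one has $\Gal(\L_0/\k)\cong\Gamma$. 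I would then verify that $\L_0/\k$ is finitely and tamely ramified: at a prime $\p$ of $\k$ that ramifies in $\K/\k$ but not in $\L/\K$, the inertia subgroup of $\Gal(\L/\k)$ at any prime above $\p$ meets $\Gamma$ trivially and surjects onto $\Delta$, so it lies entirely in the $\Delta$-factor and contributes no ramification in $\L_0/\k$; the only remaining ramification in $\L_0/\k$ lies above the primes of $\k$ below $S$, which are tame. Consequently $\rho$ lifts to a representation $\tilde\rho:\GG_\Sigma(\k)\to\Gl_m(\Q_p)$ for a suitable finite tame set $\Sigma$ of primes of $\k$ --- this is what it means for $\rho$ to come from $\k$.

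The two non-existence conclusions then follow at once. If the Tame Fontaine-Mazur conjecture holds for $\k$, then $\tilde\rho$ has finite image, contradicting the fact that a uniform pro-$p$ group of dimension $d>0$ is infinite. If instead $d>|S|$, note that $\Gamma^{\pel}$ is a quotient of the $\sigma$-coinvariants of $(\GG_S^T(\K))^{\pel}$; since $p>2$ and $\ell=2$, the $\fq_p[\sigma]$-module $(\GG_S^T(\K))^{\pel}$ splits into $\pm 1$-eigenspaces, and its coinvariants therefore have the same $\fq_p$-dimension as its invariants, namely $s=|S|$ by part (i) of Theorem \ref{maintheo1bis} together with $p\nmid|\Cl(\k)|$. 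Hence $d\leq|S|$, again a contradiction.

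The one step I expect to require actual care is the verification, in the trivial-action case, that the descended extension $\L_0/\k$ remains finitely and tamely ramified, so that the Tame Fontaine-Mazur conjecture over $\k$ (respectively the dimension count of $\sigma$-invariants supplied by Theorem \ref{maintheo1bis}) genuinely applies. The rest is either a direct citation of Theorem \ref{maintheo1bis} or a routine linear algebra count of $\pm 1$-eigenspaces over $\fq_p$.
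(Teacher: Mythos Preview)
Your proof is correct and follows essentially the same route as the paper's (very terse) argument: rule out a non-trivial $\sigma$-action on $\Gamma$ via Theorem~\ref{maintheo1bis}(ii), then descend to $\k$. The paper compresses your Schur--Zassenhaus and ramification discussion into the phrase ``comes from $\k$ by compositum''; your explicit verification is a welcome expansion. For the bound $d\le|S|$, the paper argues instead that once $\rho$ descends to $\k$ it factors through $\GG_S(\k)$, whence $d\le d_p\Cl_S(\k)\le|S|$ (using $p\nmid|\Cl(\k)|$); your route via the $\sigma$-coinvariants of $(\GG_S^T(\K))^{\pel}$ is equivalent and, if anything, slightly more direct since it does not require the full descent. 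Note that Theorem~\ref{maintheo1bis}(i) as stated only gives $\ge s$ fixed points; the equality you use is recorded in the Remark following that theorem (the type of $\sigma$ on $\GG_S^T$ is $(s,\,d_p\Cl(\K))$), so it would be cleaner to cite that directly.
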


\medskip

We can apply Theorem \ref{maintheo1bis} to the groups $\Sl_n^1(\Z_p)=\ker\big(\Sl_n(\Z_p) \rightarrow \Sl_n(\fq_p)\big)$, $n\geq 2$.  For all $n\geq 2$, $\Sl_n^1(\Z_p)$ is a uniform FAb group of dimension
$n^2-1$. We consider automorphisms $\sigma_A$ of order $2$ obtained via conjugation by a diagonalizable matrix
$A \in \Gl_n(\Z_p)$.  According to \cite{Seligman}, all automorphisms $\sigma$ which act trivially on the Cartan subalgebra are of the form $\sigma_A$ for some $A \in \Gl_n(\Z_p)$. We thus obtain the theorem of \S \ref{rappel-boston} and the remark following it.

\begin{coro}\label{coro-sln} Under the conditions of Theorem \ref{maintheo1bis}, there exist $s$ positive-density sets $\SS_i$, $i=1,\ldots, s$ of prime ideals  $\p\subset \O_\k$ of $\k$, such that for all finite sets 
$S=\{\p_1,\cdots, \p_s\}$,  with  $\p_i \in \SS_i$, and all $n\geq 2$, there does not exist a continuous representation $\rho : \GG_S^T \rightarrow \Gl_m(\Q_p)$ with $\sigma$-uniform image $\Sl_n^1(\Z_p)$ where the involution $\sigma=\sigma_A$ is conjugation by a diagonalizable matrix  $A \in \Gl_n(\Z_p)$.
\end{coro}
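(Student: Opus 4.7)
The plan is to read off Corollary~\ref{coro-sln} as a direct specialization of Theorem~\ref{maintheo1bis}(ii), once I observe that the positive-density sets $\SS_i$ delivered by that theorem depend only on the base data $(\K/\k, p, s, T)$ and are intrinsic to the group $\GG_S^T$; in particular, they make no reference to the target group $\Gamma$. Consequently, once such sets $\SS_i$ are fixed, a single choice of $S = \{\p_1,\ldots,\p_s\}$ with $\p_i \in \SS_i$ can be used simultaneously against all uniform targets $(\Gamma,\sigma)$ satisfying the FPMF hypothesis of part~(ii). The task therefore reduces to verifying, for each $n \geq 2$ and each diagonalizable matrix $A \in \Gl_n(\Z_p)$ of order~$2$, that the pair $(\Sl_n^1(\Z_p), \sigma_A)$ fits into that framework.

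I would then recall from the standard theory (e.g.\ \cite{DSMN}) that $\Sl_n^1(\Z_p)$ is a FAb uniform pro-$p$ group of dimension $n^2 - 1$. Conjugation by a diagonalizable $A \in \Gl_n(\Z_p)$ with $A^2$ scalar but $A$ itself non-scalar yields an automorphism $\sigma_A$ of order exactly~$2$. The essential point then is that $\sigma_A$ is \fpm, and this is precisely the content of the second example in \S\ref{fpm}, whose Lie-algebraic justification is to be carried out in \S\ref{sln}: after diagonalizing $A$ into $+1$ and $-1$ eigenblocks of sizes $r$ and $n - r$, the fixed subgroup $\gamchapzero$ exponentiates the block-diagonal trace-zero part of $\sl_n$, its normal closure $\gamchap$ is a proper normal subgroup of $\Gamma$, and the quotient $G = \Gamma/\gamchap$ is seen to act non-trivially on $\gamchap/\Phi(\gamchap)$ via the Lie bracket with the off-block pieces.

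With this hypothesis in hand, Theorem~\ref{maintheo1bis}(ii) immediately excludes the existence of any continuous $\rho : \GG_S^T \to \Gl_m(\Q_p)$ whose $\sigma$-uniform image is isomorphic to $(\Sl_n^1(\Z_p), \sigma_A)$; since $n \geq 2$ was arbitrary and the sets $\SS_i$ are independent of $n$, the uniform statement of the corollary follows. The only substantive obstacle is the \FPM{} verification for $\Sl_n^1(\Z_p)$, which is a purely Lie-theoretic computation with the $\pm 1$-eigenspace decomposition induced by $A$; once that is available, the deduction from Theorem~\ref{maintheo1bis} is essentially bookkeeping, its only subtle point being the observation that the sets $\SS_i$ do not depend on the target group.
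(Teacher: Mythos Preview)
Your proposal is correct and follows essentially the same route as the paper: the paper's own proof simply says the corollary ``can be deduced from Theorem~\ref{maintheo1bis} and Proposition~\ref{prop-sln},'' and you have unpacked exactly that deduction, including the useful remark that the sets $\SS_i$ are independent of the target $(\Gamma,\sigma)$, which is what makes the conclusion uniform in $n$. The \FPM{} verification you sketch is precisely what is done in \S\ref{sln} leading up to Proposition~\ref{prop-sln}(ii).
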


\subsection{When $\sigma$ is of order $\ell  ~|~ p-1$.}

The results of the previous section for involutions can be generalized for other automorphisms.  When $\sigma$ has order $\ell >2$, we need to introduce  a notion of ramification in relation to the normal subgroup
$\gamchap$ of $\Gamma$.

\begin{defi} Let $\rho : \GG_\Sigma^T \rightarrow \Gl_m(\Q_p)$ with image $\Gamma$ be a $\sigma$-uniform representation for some $\sigma\in \Aut(\Gamma)$ of order prime to $p$.  For a subset $S' \subset \Sigma$, we say that $\gamchap$ is supported at $S'$ if the 
inertia groups at the places in $S'$ generate $\gamchap$.
\end{defi}

\medskip

For a positive integer $n$, let  $\K_S^{(n)}/\K$  be the $n$th stage of the $p$-tower $\K_S/\K$, i.e. $\K_S^{(n)}$ is the fixed field of the $n$th term of the central series of $\GG_S=\GG_S(\K)$. Since $S$ does not
contain primes dividing $p$, by class field theory, $\K_S^{(n)}/\K$ is a finite extension.

\medskip

For a prime $\ell>3$, we put
 \begin{eqnarray}\label{entier-m}\displaystyle{m(\ell)=1+ \lceil 2^{\ell-1}\log_2(\ell -1) -\log_2(\ell-1)(\ell-2) \rceil} ;\end{eqnarray} 
we also define
  $m(2)=1$ and $m(3)=2$ -- see remark \ref{longueur-derivee} for an explanation of the motivation for this definition.
 
 \medskip

\begin{Theorem}\label{maintheorem0}  Let $\K$ be a number field admitting an automorphism $\sigma$ of order $\ell ~|~ p-1$ with fixed field $\k=\K^\sigma$.
Let $S$ be a finite set of primes of $\k$ not dividing $p$ such that the action of  $\sigma$ on $\GG_S^{{ab}}$ has no non-trivial fixed-points.
Let $s \in \Z_{>0}$.  Then there is an integer $A$ depending only on $[\K^{(m(\ell))}_S:\K]$,  $s$ and $|S|$ such that if $T$ is a finite set of places of $\k$ that split completely in $\K^{(m(\ell))}_S/\k$ satisfying $|T|\geq \A$, then there exist $s$ positive-density sets $\SS_i$, $i=1,\cdots,s$, of prime ideals $\p\subset \O_\k$ of $\k$, with the property that for all finite sets $S'=\{\p_1,\cdots, \p_s\}$, with $\p_i \in \SS_i$, we have:
\begin{enumerate}
\item[(i)] the action of $\sigma$ on $\GG_\Sigma^{T}/\Phi(\GG_\Sigma^T)$ has $s$ independent fixed points, where $\Sigma=S\cup S'$;
\item[(ii)] there is no FPMF $\sigma$-uniform continuous representation $\rho : \GG_\Sigma^T \rightarrow \Gl_m(\Q_p)$ such that $\gamchap$ is  supported at $S'$, where $\Gamma$ is the image of $\rho$.
\end{enumerate} 
\end{Theorem}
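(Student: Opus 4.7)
The plan is to prove (i) and (ii) in tandem: construct density sets $\SS_i$ whose members produce new $\sigma$-fixed generators in the Frattini quotient, and then rule out the sought representation via a Boston-style fixed-point-free reduction.

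For part (i), one seeks primes $\p$ of $\k$ that split in $\K/\k$ as $\p\O_\K = \P \cdot \sigma(\P)$ with $N\P \equiv 1 \pmod p$; tame ramification at such a $\p$ produces the $\sigma$-invariant vector $\tau_\P + \sigma(\tau_\P)$ in $\GG_\Sigma^T/\Phi(\GG_\Sigma^T)$. To force $s$ of these vectors to be $\fq_p$-linearly independent, I would pass to a sufficiently large finite $\sigma$-stable $p$-extension $F/\K$ inside $\K_S^T$ containing $\K_S^{(m(\ell))}$, and use Chebotarev in $F/\k$ to cut out the positive-density classes $\SS_i$ in such a way that any $\p_i \in \SS_i$ yields an inertia class lying outside the $\fq_p$-span of the previously produced ones. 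The hypothesis that $T$ splits completely in $\K_S^{(m(\ell))}/\k$, together with $|T| \ge \A$, arranges, through a Gras-Munnier / Shafarevich-type rank computation, that the relevant piece of the class-field tower has enough room for $s$ new $\sigma$-fixed generators; the constant $\A$ is precisely the rank threshold that emerges from that computation.

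For part (ii), suppose $\rho : \GG_\Sigma^T \to \Gl_m(\Q_p)$ is a \FPM{} $\sigma$-uniform representation with image $\Gamma$, and that $\gamchap$ is supported at $S'$. Set $G = \Gamma/\gamchap$, with its inherited $\sigma$-action. Because the inertia subgroups at $S'$ already lie in $\gamchap$, the composite $\GG_\Sigma^T \to \Gamma \twoheadrightarrow G$ is unramified at $S'$ and factors through $\GG_S^T$. The no-fixed-point hypothesis on $\sigma$ acting on $\GG_S^{\textrm{ab}}$ transfers (since $\gcd(\ell,p)=1$ kills $H^1$ of $\langle \sigma \rangle$ on $\fq_p$-modules) to $\GG_S^T/\Phi$, and hence to $G/\Phi(G)$. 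Lifting fixed-point-freeness from the Frattini quotient, $\sigma$ acts without non-trivial fixed points on $G$ itself, so $G$ is nilpotent of class at most $m(\ell)$ by Proposition \ref{FPF}. Combined with the FAb property inherited from the tame group $\GG_\Sigma^T$, this forces $G$ to be finite; the splitting of $T$ in $\K_S^{(m(\ell))}$ moreover shows that $G$ is already a quotient of the finite group $\GG_S^{(m(\ell))}$.

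The main obstacle is closing the gap from \emph{finite} to \emph{trivial} for $G$. The plan is to exploit that $\Gamma$ is uniform and that the $\sigma$-action on its Lie algebra $L(\Gamma)$ is semisimple (as $\ell \nmid p$): the closed normal subgroup $\gamchap$, being the normal closure of $\Gamma^\sigma = \exp(L(\Gamma)^\sigma)$ inside a uniform group with a coprime semisimple action, should be saturated in $\Gamma$, so that $G$ is torsion-free. A finite torsion-free group is trivial, hence $G = \{1\}$. But then $\Gamma = \gamchap$, and the \FPM{} condition requires the trivial group $G$ to act non-trivially on $\gamchap/\Phi(\gamchap)$, which is absurd. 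This contradiction establishes (ii).
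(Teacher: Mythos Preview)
Your argument for (ii) contains a genuine error that makes the whole approach collapse. You claim that $\gamchap$ is saturated in $\Gamma$ (i.e., that $G=\Gamma/\gamchap$ is torsion-free), and deduce $G=\{1\}$. This is false, and the paper explicitly computes counterexamples: for $\Gamma=\Sl_2^1(\Z_p)$ with $\sigma$ the conjugation by $\mathrm{diag}(1,-1)$, one finds $G\simeq(\Z/p\Z)^2$; more generally Proposition~\ref{prop-sln} gives $G\simeq(\Z/p\Z)^{2k(n-k)}$ for $\Sl_n^1(\Z_p)$. The subgroup $\gamchapzero=\exp(L(\Gamma)^\sigma)$ is indeed the fixed locus, but its normal closure $\gamchap$ has no reason to be an isolated subgroup; when $\Gamma$ is FAb, $\gamchap$ is \emph{open} (Proposition~\ref{fini1}), so its saturation is all of $\Gamma$, not $\gamchap$ itself. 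Your argument, if it worked, would prove (ii) for \emph{any} $S'$ supporting $\gamchap$, with no constraint on the primes---already a sign that something is off, since the statement manifestly depends on the careful construction of the $\SS_i$.

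The paper's route to the contradiction is quite different and explains why the $\SS_i$ must be chosen so delicately. One does not try to kill $G$; instead one shows that $G$ acts \emph{trivially} on $\gamchappel$, contradicting the FPMF hypothesis directly. The mechanism is as follows. Write $\L_0=\K_S^{(m(\ell))}$ and $\G=\Gal(\L_0/\k)$. The bound $|T|\ge\A$ is exactly what is needed (via Theorem~\ref{constante-liberte}) to force $\fq_p\otimes\O_{\L_0}^T$ to contain a free $\fq_p[\G]$-module of rank $s|\G|(|S||\G|+1)$; Kummer duality then plants a free submodule of the same rank inside $\Gal(\F^T/\L_0')$, and one can peel off $s$ rank-$|\G|$ blocks $\H^i$ disjoint from the Frobenius span $\F(S)$. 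The sets $\SS_i$ are the Chebotarev classes whose Frobenius hits the element $x_0^i=\sum_{g\in\G}(g-1)x_g^i\in\H^i$. The point of this element (Lemma~\ref{relation-triviale}, Proposition~\ref{inertie}) is that its annihilator is exactly $\fq_p N$: the resulting inertia module $\I_S^T(U_i,\cdot)$ is one-dimensional with \emph{trivial} $\G$-action, and this persists in every intermediate field $\F\subset\L_0$ (Lemma~\ref{lemme-technique1}). Now, given a putative FPMF $\sigma$-uniform $\Gamma$ with $\gamchap$ supported at $S'$, your observation that $\F=\L^{\gamchap}\subset\L_0$ is correct; but the conclusion is that $\I_S^T(S',\F)\simeq(\fq_p)^{\oplus s}$ as $G$-modules, and since this module surjects onto $\gamchappel$ (Proposition~\ref{analytic-arithmetic}), $G$ acts trivially there---contradiction.

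Your sketch for (i) gestures at Chebotarev and Gras--Munnier but misses the structural input: the constant $\A$ is not a ``rank threshold'' from a Shafarevich count, it is the threshold from Theorem~\ref{constante-liberte} guaranteeing enough free $\fq_p[\G]$-summands in the $T$-units of $\L_0$, and it is this freeness (not merely independence of inertia classes) that simultaneously delivers (i) and the trivial-$G$-action needed for (ii).
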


\subsection{Along the cyclotomic $\Z_p$-extension}\label{section-iwasawa}

The realm of $\Z_p$-extensions is particularly rich for providing situations where we can take $T$ (in the consideration of previous sections) as small as possible.  Let $\displaystyle{\k_\infty=\bigcup_n \k_n}$ (resp. $\displaystyle{\K_\infty=\bigcup_n \K_n}$) be the cyclotomic $\Z_p$-extension of $\k$ (resp. of $\K$), where as before $\K/\k$ is a cyclic degree $\ell ~|~ p-1$ extension with Galois group $\langle \sigma \rangle$. Let us assume that all along $\k_\infty/\k$, the $p$-class groups of the fields $\k_n$ are trivial.  Then $\sigma$ acts without fixed-points on the $p$-class groups $\Cl(\K_n)$ of the fields $\K_n$.  On the other hand, the growth of $[\K_n:\K]=p^n$ allows us to apply Theorem \ref{maintheorem0} with $T=\emptyset$ as long as $n$ is sufficiently large (when $\ell=2$, we have to assume in addition that the real infinite places of $\k$ do not complexify in $\K/\k$).

\begin{Theorem} \label{cyclo} Let $s \in \Z_{>0}$. Let
$\K/\Q$ be a real quadratic field and $\Gal(\K/\Q)=\langle \sigma \rangle$. Let $\displaystyle{\K_\infty=\bigcup_n\K_n}$ (resp. $\displaystyle{\Q_\infty=\bigcup_n\Q_n}$) be the cyclotomic $\Z_p$-extension of $\K$ (resp. of $\Q$). We assume that Greenberg Conjecture holds for $\K$ (the invariants $\mu$ and $\lambda$ vanish).
Then for $n_0\gg 0$, there exist  $s$ positive density sets $\SS_i$, $i=1,\cdots, s$, of places  $\p \subset \O_{\Q_{n_0}}$, such that for all finite sets $S =\{\p_1,\cdots, \p_s\}$, with $\p_i \in \SS_i$, and for all $n\geq n_0$,
\begin{enumerate}
\item[(i)] the action of $\sigma$ on $\GG_S(\K_n)/\Phi(\GG_S(\K_n))$ has $s$ independent fixed points;
\item[(ii)] there is no FPMF continuous $\sigma$-uniform $\rho : \GG_S(\K_n) \longrightarrow \Gl_m(\Q_p)$ such that $\gamchap$ is  supported at $S$, where $\Gamma$ is the image of $\rho$.
\end{enumerate} 
\end{Theorem}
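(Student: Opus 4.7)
The plan is to reduce Theorem \ref{cyclo} to Theorem \ref{maintheorem0}, applied layer by layer to $\K_n/\Q_n$ in the cyclotomic tower, taking the initial ramification set empty (so that our $S$ from \ref{cyclo} plays the role of $S'$ in \ref{maintheorem0}) and taking the auxiliary set $T$ empty as well. Two hypotheses of \ref{maintheorem0} then require verification: the fixed-point-free action of $\sigma$ on $\GG_\emptyset^{\mathrm{ab}}(\K_n)$, and the admissibility of $T=\emptyset$.

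For the first, recall that $\GG_\emptyset^{\mathrm{ab}}(\K_n)$ is identified via class field theory with the $p$-class group $\Cl(\K_n)$. Since $p$ is odd, the involution $\sigma$ induces an eigenspace decomposition $\Cl(\K_n)=\Cl(\K_n)^{+}\oplus\Cl(\K_n)^{-}$, and the fixed part is $\Cl(\K_n)^{+}$. For $\K/\Q$ quadratic this plus-part is identified with $\Cl(\Q_n)$ via the natural map from the base; Iwasawa's theorem gives $\Cl(\Q_n)=0$ for all $n$, so $\Cl(\K_n)^\sigma=0$ and $\sigma$ acts fixed-point-freely on $\GG_\emptyset^{\mathrm{ab}}(\K_n)$. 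Greenberg's conjecture moreover forces $\Cl(\K_n)$ itself to stabilize, so the finite data controlling \ref{maintheorem0} is uniform in $n$.

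For the second, in \ref{maintheorem0} the set $T$ is required with $|T|\geq A$, where $A$ depends on $[\K_n^{(m(\ell))}:\K_n]$, on $s$ and on $|S_0|=0$. By Greenberg's stabilization, $\K_n^{(m(\ell))}/\K_n$ stabilizes, so $[\K_n^{(m(\ell))}:\K_n]$ is bounded uniformly in $n$, while $[\K_n:\K]=p^n$ grows without bound. The role of $T$ in \ref{maintheorem0} is to trivialize a certain $T$-modified ray class group and to provide a $\sigma$-equivariant Chebotarev selection; both are supplied internally by the $\Z_p$-tower, since for $n_0$ large the layer $\K_{n_0}$ already contains enough primes splitting completely in the fixed finite extension $\K_{n_0}^{(m(\ell))}(\zeta_p)/\K_{n_0}$ to perform the selection. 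Consequently, for $n_0$ sufficiently large the construction of \ref{maintheorem0} goes through with $T=\emptyset$.

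Applying \ref{maintheorem0} with $S_0=T=\emptyset$ at level $n_0$ then produces $s$ positive-density sets $\SS_i\subset \mathrm{Spec}(\O_{\Q_{n_0}})$, cut out by Chebotarev conditions in $\Gal(\K_{n_0}^{(m(\ell))}(\zeta_p)/\Q_{n_0})$, satisfying (i) and (ii) of \ref{cyclo} at level $n_0$. Since these defining Chebotarev conditions pull back compatibly along $\Q_{n_0}\subset\Q_n$, the same $\SS_i$ continue to produce $s$ independent $\sigma$-fixed inertia contributions in $\GG_S^{\mathrm{ab}}(\K_n)/\Phi$ and to exclude FPMF $\sigma$-uniform representations with $\gamchap$ supported at $S$, uniformly for all $n\geq n_0$. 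The principal obstacle in the plan is the second step, namely making precise the absorption of $T$ by the tower: this requires a careful comparison of $\Cl^T_S(\K_n^{(m(\ell))})$ with its $T=\emptyset$ analogue, using Greenberg's stabilization together with the $p^n$-growth to show that the quantitative obstruction measured by $A$ is met automatically by the layer data for $n\gg 0$.
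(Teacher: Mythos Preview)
Your overall architecture matches the paper's: reduce to Theorem~\ref{maintheorem0} (equivalently, the proof of Theorem~\ref{maintheorem}) for the layers $\K_n/\Q_n$ with empty initial ramification set and with $T=\emptyset$, using that $\Cl(\Q_n)_p=0$ forces $\sigma$ to act FPF on $\Cl(\K_n)$, and that Greenberg stabilizes the data as $n$ grows. That part is fine.

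The gap is in your second step. You describe the role of $T$ in Theorem~\ref{maintheorem0} as ``trivializing a certain $T$-modified ray class group and providing a $\sigma$-equivariant Chebotarev selection''. That is not what $T$ does there. In the proof of Theorem~\ref{maintheorem}, $T$ enters only through Theorem~\ref{constante-liberte} (concretely, Theorem~\ref{liber}): one needs the free rank $t$ of $\fq_p\otimes\O_{\L_0}^T$ as an $\fq_p[\G]$-module to satisfy $t\geq s|\G|(|S||\G|+1)$, and $|T|$ is the knob that raises $t$. Your proposed comparison of $\Cl_S^T$ with $\Cl_S$ is therefore aimed at the wrong invariant. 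The paper's actual mechanism for absorbing $T$ into the tower is archimedean: with $\ell=2$ one has $m(2)=1$, so $\L_0=\K_n^H$; since $\K$ is totally real, $d_\infty$ at level $n$ equals $r_1 p^n$, and Theorem~\ref{liber} with $T=\emptyset$ gives
\[
t_n \;\geq\; r_1 p^n \;-\;(|\G|-1)\,d_p\Cl(\K_n)\;-\;1.
\]
By Greenberg's conjecture $|\G|$ and $d_p\Cl(\K_n)$ stabilize, so for $n\gg 0$ one gets $t_n\geq s|\G|$ and the construction of the $\SS_i$ goes through with $T=\emptyset$. This is the content you flag as ``the principal obstacle'' but do not resolve; once you replace your ray-class comparison by this unit-rank estimate, the obstacle disappears.

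Finally, your claim that the $\SS_i$ chosen at level $n_0$ work for all $n\geq n_0$ ``by pullback of Chebotarev conditions'' is too quick. The paper argues instead that the construction at two consecutive levels gives $(\GG_S(\K_{n+1}))^{\pel}\simeq(\GG_S(\K_n))^{\pel}$, and then invokes the Fukuda-type stabilization criterion (\cite{fukuda}, Theorem~1) to propagate this isomorphism to all $n\geq n_0$. You should insert that step rather than appeal to compatibility of governing fields.
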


 \begin{rema}
 We note that the pro-$p$ group $\GG_\emptyset(\K)$ is infinite as soon as the $p$-rank of $\Cl(\K)$ is at least $3$ (Schoof \cite{Schoof}). 
 \end{rema}

\subsection{Strategy of the proofs and outline of the rest of the paper}\label{strat-proof}

Our main results combine a number of ingredients: the effect of a semisimple cyclic action with fixed points on group structure, the rigid structure of uniform groups,  arithmetic properties of the arithmetic fundamental groups $\GG_S^T$, existence of Minkowski units, etc. In this subsection, we will give an outline of how these ingredients are combined together.

\medskip

$\bullet$ \emph{Criteria for infinitude of $\GG_S^T.$} To simplify, let us consider the context of Theorem \ref{maintheo1bis}. In the statement of that theorem, we refer to the need for $T$ to be "large enough" and here we wish to explain this a bit more.  In order to arrange to have enough fixed points,  we want  to take \begin{eqnarray}\label{inegalite000} |T|\geq \alpha s  + \beta, \end{eqnarray} with $\alpha$ and $\beta$ depending  on $\K$. 
On the other hand, by the theorem of Golod-Shafarevich,  the group $\GG_S^T$ is infinite when the $p$-rank of $\GG_S^T$ is sufficiently large. To be more exact, if \begin{eqnarray} \label{inegalite001} d_p \GG_S^T \geq 2+2\sqrt{|T|+r_1+r_2+1}, \end{eqnarray} where $(r_1,r_2)$ is the signature of $\K$, the pro-$p$ group $\GG_S^T$ is infinite (see for example \cite{Maire-JTNB}). Moreover, the $p$-rank of $\GG_S^T$ is at least  $s$ (because of the choice of $S$ and $T$).
 Hence, by (\ref{inegalite000}) and (\ref{inegalite001}),
 one can guarantee the infiniteness of $\GG_S^T$ by taking $s$ sufficiently large, \emph{i.e.} by introducing sufficiently many fixed points.   

\

$\bullet$ \emph{Uniform groups} (Part \ref{PartI}). Next we turn to the situation where a cyclic group $\langle \sigma \rangle$ of order $\ell$, with $\ell ~|~ p-1$, acts on a uniform group $\Gamma$. In particular we focus on the subgroup $\gamchapzero$ generated by the fixed points and its normal closure in $\Gamma$, denoted $\gamchap$. 
Here, the key result is Proposition \ref{pointfixe-uniform}:  it specifies generators for $\gamchap$ and is crucial for the rest of our work. When $\Gamma$ is FAb, the quotient group $G:=\Gamma/\gamchap$,  is a finite $p$-group. Moreover, when $\sigma$ is of order $2$, $G$ is abelian!

\medskip

$\bullet$ \emph{The choice of the prime ideals} (Part \ref{PartII}.) We fix $\K$ and consider varying sets $S,T$ where $\L \subseteq \K_S^T$ has Galois group $\Gamma=\Gal(\L/\K)$.  To simplify the exposition, we now assume $\sigma$ has order $2$.  Since $G$ is abelian, the field $\F$ fixed by $\gamchap$ is abelian over $\K$. If morevoer $\Gamma_\sigma$ is supported at $S$ then $\F$ is contained in the $p$-Hilbert class field $\K^H$.  To simplify further, let us assume $\F=\K^H$. The choice of prime ideals $\p$ of $S$ is based  on the following desired outcomes: (i) to create enough fixed points for the action of $\sigma$; (ii) to control the generators of $\GG_S(\F)$ via their inertia groups. Typically, the group $G$, acts trivially on the new ramification in $G_S^{{ab}}(\F)$.

\

To show the existence of such prime ideals, one  uses Kummer theory and  the Chebotarev density Theorem.  In order to do this, we require information about the units of the number field $\F$, namely we need $\F$ to contain "Minkowski units". {To be more precise}, let $\G=\Gal(\F/\k)$; we say that $\F$ has a Minkowski unit if the  quotient   $\O_{\F}^\times/(\O_{\F}^\times)^p$ contains a non-trivial  $\fq_p[\G]$-free module.  Note that we are not  in the semisimple case as $p\mid |\G|$! This delicate and interesting question has been studied in recent work of Ozaki \cite{Ozaki}: to estimate the  rank of the maximal free $\fq_p[\G]$-module of  $\O_{\F}^\times/(\O_{\F}^\times)^p$. Our idea here is to introduce a set $T$ and control the  $\fq_p[\G]$-structure  of  $T$-units of $\F$.   
\medskip

$\bullet$ \emph{The strategy} (\ref{PartIII}).
There exists a morphism of $\fq_p[G]$-modules $$\psi : \GG_S^{ab}(\F)/p \twoheadrightarrow \gamchap/\Phi(\gamchap).$$ 
The map $\psi$ dictates the compatibility of two  $\fq_p[G]$-modules, one of which comes from arithmetic considerations, and the other from group-theoretical ones. We suspect that the exploitation of this kind of compatibility can be useful in many other contexts.

\medskip

We now give some examples for which the   structures  of $\GG_S^{ab}(\F)/p$ and of $\gamchap/\Phi(\gamchap)$ as  $\fq_p[G]$-modules are not  compatible. 
Typically, the given situations are those for which the morphism $\psi$ is deduced from a  $\fq_p[G]$-module $\MM$ on which  $G$ acts trivially, namely we have a diagram as follows: 
$$\xymatrix{& \MM=(\fq_p)^{\oplus^s} \ar@{^(->}[ld]\ar@{.>>}[rd]^\psi& \\
\GG_S^{ab}(\F)/p \ar@{->>}[rr]  & &\gamchap/\Phi(\gamchap)
}$$ 

From the above diagram, one obtains a contradiction since  $G=\Gamma/\gamchap$ does not act trivially on  $\gamchap/\Phi(\gamchap)$.  This explains the relevance of  the notion of the action of $\sigma$ being "\fpm" that was introduced in Definition \ref{defi-fpm}.



\part{Uniform  groups and Fixed Points} \label{PartI}

Let $p$ be a prime number and let  $\Gamma$ be a finitely generated pro-$p$ group. 

\begin{itemize}
\item For two elements
$x,y$
of
$\Gamma$,  denote by $x^y:=y^{-1}xy$ the conjugate of $x$ by $y$, and by $[x,y]=x^{-1}x^y$ the commutator of $x$ and $y$. Put $[\Gamma, \Gamma]=\langle [x,y], x, y \in \Gamma\rangle$ and $\Phi(\Gamma)=[\Gamma,\Gamma]\Gamma^p$;
\item Let $\Gamma^{ab}:=\Gamma/[\Gamma,\Gamma]$ be the maximal abelian quotient of  $\Gamma$;
\item  The Frattini quotient  $\Gamma^{\pel}:=\Gamma/\Phi(\Gamma)$  is the maximal abelian $p$-elementary quotient of $\Gamma$;
\item Denote by   $d_p(\Gamma)=\dim_{\fq_p} H^1(\Gamma,\fq_p)=\dim_{\fq_p}\Gamma^{\pel}$ the $p$-rank of $\Gamma$: by the Burnside Basis Theorem, it is the minimal number of generators of $\Gamma$.
\end{itemize}

  \section{Schur-Zassenhaus}
  For this paragraph our reference is the book of  Ribes and Zalesskii \cite[Chapter 4]{RZ}.

  If $\Gamma$ is a finitely generated  pro-$p$ group of $p$-rank $d$, denote by $\Aut(\Gamma)$ the group of automorphisms  (always continous) of $\Gamma$.
  Recall that the kernel of the morphism $\ker\big(\Aut(\Gamma) \rightarrow \Aut(\Gamma^{\pel})\big)$is a pro-$p$ group  and that $\Aut(\Gamma^{\pel}) \simeq \Gl_d(\fq_p)$.
   Let us start with the following well-known result  which is crucial in our context: 
 \begin{theo}[Schur-Zassenhaus]\label{ZST}
Let  $1\longrightarrow \Gamma \longrightarrow \G \longrightarrow \G/\Gamma \longrightarrow 1$ be an exact sequence of profinite groups, where $\Gamma$ is a  pro-$p$ group finitely generated and where $\G/\Gamma$ is finite of order coprime to $p$.
Then the  group $\G$ has   a subgroup $\Delta_0$ isomorphic to the quotient $\Delta=\G/\Gamma$ and $\Delta_0$ is unique up to conjugation in $\G$. In particular:
$\G = \Gamma \rtimes \Delta_0 \simeq \Gamma \rtimes \Delta$. In other words, the  pointed set $H^1(\Delta,  \Gamma )$ is reduced to  $\{[0]\}$.

\end{theo}

\begin{proof}
See for example Theorem 2.3.15, \cite{RZ}.
\end{proof}

Let us now consider a  finitely generated  pro-$p$ group $\Gamma$ equipped with an automorphism $\sigma \in \Aut(\Gamma)$ of order coprime to $p$.
To simplify, we moreover assume here that the order of $\sigma $ is a prime number $\ell$.
 
 \begin{defi}
 Denote by  $$\gamchapzero:=\langle \gamma \in \Gamma, \ \sigma(\gamma)=\gamma \rangle$$
the closed subgroup generated by the fixed point of $\Gamma$
and by 
$$\gamchap:=\gamchapzero^{\Norm},$$ 
 the normal closure of $\gamchapzero$ in $\Gamma$. 
\end{defi}

 Of course, $\sigma$ acts trivially on $\gamchapzero$  and  $ \sigma \in\Aut(\gamchap)$.

\begin{defi} 
We say that the action of $\sigma$ on $\Gamma$ is  \emph{Fixed-Point-Free (FPF)} if  $\gamchapzero=\{e\}$.
\end{defi}

    Recall first a well-known result that shows the rigidity of the FPF-notion.

 \begin{prop}\label{FPF}
Let $\Gamma$ be a pro-$p$ group and let $\sigma \in \Aut(\Gamma)$ of order coprime to $p$.
If the action of $\sigma$ on $\Gamma$ is FPF, then $\Gamma$ is nilpotent. Moreover if $\sigma$ is of order $\ell=2$, then $\Gamma$ is abelian and if $\sigma$ is of order $3$ then $\Gamma$ is nilpotent of class at most~$2$. For $\ell \geq 5$, the nilpotency class of $\Gamma$ is at most  $\displaystyle{n(\ell):= \frac{(\ell-1)^{2^{\ell-1}-1}-1}{\ell-2}}$.
\end{prop}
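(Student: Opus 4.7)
My plan is to reduce the statement to finite continuous quotients of $\Gamma$ and then invoke the classical theorems on fixed-point-free automorphisms of prime order: a direct elementary argument when $\ell=2$, and Higman's theorem when $\ell\geq 3$. Since $\Gamma$ is pro-$p$, nilpotency of class $\leq c$ is detected on the system of finite quotients $\Gamma/N$ with $N$ open and $\sigma$-stable (which is cofinal, as $\Gamma$ is finitely generated and characteristic open subgroups are automatically $\sigma$-stable). It therefore suffices to bound the nilpotency class of each such $\Gamma/N$ uniformly in $N$.

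The key point of the reduction is that FPF descends to each such $\Gamma/N$. Applying the long exact sequence in non-abelian cohomology of $\langle\sigma\rangle$ to $1\to N \to \Gamma \to \Gamma/N \to 1$ gives the exact segment
$$\Gamma^\sigma \longrightarrow (\Gamma/N)^\sigma \longrightarrow H^1(\langle\sigma\rangle, N).$$
By Theorem \ref{ZST}, the pointed set $H^1(\langle\sigma\rangle, N)$ is reduced to $\{[0]\}$ since $N$ is pro-$p$ and $\ell$ is coprime to $p$. Hence $\Gamma^\sigma=\{e\}$ forces $(\Gamma/N)^\sigma=\{e\}$, and we are reduced to bounding the nilpotency class of a finite $p$-group $G$ carrying a fixed-point-free automorphism of prime order $\ell$.

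For $\ell=2$ I would argue directly: the map $\phi(g)=g^{-1}\sigma(g)$ is injective on $G$ by FPF (if $\phi(g)=\phi(g')$ then $g'g^{-1}\in G^\sigma$), hence bijective by finiteness of $G$. Writing any $g$ as $h^{-1}\sigma(h)$ and computing $\sigma(g)=\sigma(h)^{-1}h = g^{-1}$ shows that $\sigma$ inverts every element; then comparing $\sigma(gg')=(gg')^{-1}=(g')^{-1}g^{-1}$ with $\sigma(g)\sigma(g')=g^{-1}(g')^{-1}$ forces $G$, and so $\Gamma$, to be abelian. For $\ell=3$ and $\ell\geq 5$ I would invoke Higman's theorem: any finite nilpotent group admitting a FPF automorphism of prime order $\ell$ has nilpotency class at most $2$ when $\ell=3$, and at most $n(\ell)$ when $\ell\geq 5$. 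Applying this uniformly to the quotients $\Gamma/N$ and passing to the inverse limit, the closed subgroup $\gamma_{n(\ell)+1}(\Gamma)$ lies in every open $N$ and hence is trivial.

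The main obstacle is Higman's theorem itself, whose combinatorial proof (through the character theory of $\langle\sigma\rangle$ acting on the graded $\fq_p$-Lie algebra of the lower central series of $G$) supplies the only non-formal input; once this deep result is cited, everything else is routine given the Schur--Zassenhaus vanishing of $H^1(\langle\sigma\rangle, N)$ and the elementary direct calculation in the case $\ell=2$.
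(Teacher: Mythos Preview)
Your argument is correct and is in fact more than the paper provides: the paper's own proof consists solely of the citation ``See Corollary 4.6.10, \cite{RZ}''. Your reduction to finite $\sigma$-stable quotients via Schur--Zassenhaus, followed by the classical elementary inversion argument for $\ell=2$ and Higman's bound for $\ell\geq 3$, is exactly the content behind that reference.

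One small remark: in your parenthetical you invoke finite generation of $\Gamma$ to produce a cofinal system of characteristic (hence $\sigma$-stable) open normal subgroups, but the proposition as stated carries no such hypothesis. The repair is immediate and standard: for any open normal $N\lhd\Gamma$ the finite intersection $\bigcap_{i=0}^{\ell-1}\sigma^i(N)$ is open, normal, and $\sigma$-stable, so these subgroups are cofinal without any finiteness assumption. Similarly, Theorem~\ref{ZST} as stated in the paper includes a finite-generation hypothesis, but the vanishing $H^1(\langle\sigma\rangle,N)=\{[0]\}$ holds for any pro-$p$ group $N$ with $(|\langle\sigma\rangle|,p)=1$, so your descent of the FPF condition goes through unchanged.
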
 
    
\begin{proof}
See Corollary 4.6.10, \cite{RZ}.
\end{proof}

   Now we may present the first step of our work.
    
    \begin{prop}\label{fini1}
    Let  $\Gamma$ be a finitely generated pro-$p$ group and $\sigma \in \Aut(\Gamma)$ of order $\ell$ coprime to $p$. Put $G:=\Gamma/\gamchap$.  Then the action of $\sigma$ on $G$ is FPF, so $G$ is nilpotent. If moreover  $\Gamma$ is {FAb} then  $G$ is a finite group.
    \end{prop}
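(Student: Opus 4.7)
My plan splits into three stages: (a) show $\sigma$ acts on $G := \Gamma/\gamchap$ without non-trivial fixed points, (b) deduce nilpotency from Proposition \ref{FPF}, (c) under the FAb hypothesis, combine nilpotency with finite abelianization to force $|G| < \infty$. Only (a) is substantive; (b) is a direct application and (c) is standard.

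For (a), I would first verify that $\gamchap$ is $\sigma$-stable (its generating set $\gamchapzero$ is pointwise fixed, and $\sigma$ permutes the $\Gamma$-conjugates appearing in the normal closure), so $\sigma$ descends to $G$. To show $G^\sigma = 1$, I would apply the standard five-term exact sequence of pointed sets from non-abelian cohomology attached to $1 \to \gamchap \to \Gamma \to G \to 1$:
$$1 \longrightarrow \gamchap^\sigma \longrightarrow \Gamma^\sigma \longrightarrow G^\sigma \longrightarrow H^1(\langle \sigma \rangle, \gamchap).$$
The last term vanishes by Schur--Zassenhaus (Theorem \ref{ZST}): triviality of $H^1(\langle\sigma\rangle, \gamchap)$ is exactly the conjugacy of all complements of $\gamchap$ in $\gamchap \rtimes \langle \sigma \rangle$. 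Consequently $\Gamma^\sigma$ surjects onto $G^\sigma$; but by definition $\Gamma^\sigma \subseteq \gamchapzero \subseteq \gamchap$, so the image in $G$ is trivial, giving $G^\sigma = 1$.

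Step (b) follows by applying Proposition \ref{FPF} to the pair $(G,\sigma)$: $G$ is pro-$p$ as a quotient of $\Gamma$, $\sigma$ has order $\ell$ coprime to $p$, and the action is FPF by (a), so $G$ is nilpotent. For step (c), $G$ is finitely generated pro-$p$ (as a quotient of $\Gamma$), and FAb descends from $\Gamma$: any open $U \leq G$ has open preimage $\tilde U \leq \Gamma$ whose abelianization is finite, and $U^{ab}$ is a quotient thereof. A finitely generated nilpotent pro-$p$ group with finite abelianization is finite, because the commutator map gives surjections $G^{ab} \otimes_{\Z_p} (G_i/G_{i+1}) \twoheadrightarrow G_{i+1}/G_{i+2}$ of the lower central series quotients, so induction together with the bounded nilpotency class controls $|G|$ from finiteness of $G^{ab}$.

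The main obstacle is (a): the non-abelian $H^1$ vanishing for a potentially non-finitely-generated closed pro-$p$ subgroup $\gamchap$. One must interpret Theorem \ref{ZST} in its conjugacy-of-complements form to rule out arbitrary $1$-cocycles, and implicitly lean on the general profinite version of Schur--Zassenhaus available in \cite{RZ} when finite generation of $\gamchap$ is not a priori guaranteed.
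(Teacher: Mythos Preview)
Your proof is correct and follows essentially the same approach as the paper: the non-abelian cohomology exact sequence for $1\to\gamchap\to\Gamma\to G\to 1$, vanishing of $H^1(\langle\sigma\rangle,\gamchap)$ via Schur--Zassenhaus, then Proposition~\ref{FPF} for nilpotency, and finally FAb plus nilpotent implies finite. You supply a bit more detail than the paper (the $\sigma$-stability of $\gamchap$, the inductive argument on lower-central quotients), and your caveat about finite generation of $\gamchap$ is well taken---the paper's stated hypothesis in Theorem~\ref{ZST} is overly restrictive, but the cited result in \cite{RZ} covers the general profinite case.
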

    
    \begin{proof}
Consider the non-abelian Galois $\langle \sigma \rangle$-cohomology of the sequence:
$$1 \longrightarrow \gamchap \longrightarrow \Gamma \longrightarrow G \longrightarrow 1,$$
to obtain the sequence of pointed sets: $$0 \longrightarrow H^0(\langle \sigma \rangle,  \gamchap ) \longrightarrow H^0(\langle \sigma \rangle,  \Gamma) \longrightarrow H^0(\langle \sigma \rangle,  G ) \longrightarrow H^1(\langle \sigma \rangle,  \gamchap ) \longrightarrow \cdots$$
By the Schur-Zassenhaus Theorem \ref{ZST}, $H^1(\langle \sigma \rangle,  \gamchap )=\{[0]\}$ and then as $\gamchapzero=H^0(\langle \sigma \rangle,  \Gamma )=H^0(\langle \sigma \rangle,  \gamchap)$, one obtains $H^0(\langle \sigma \rangle,  G)=\{[0]\}$: in other words, the action of $\sigma$ on $G$ is FPF. Then by Proposition \ref{FPF} the pro-$p$ group $G$ is nilpotent of class at most $n(\ell)$. Moreover if $\Gamma$ is {FAb}, the pro-$p$ group $G$ is also {FAb}, one concludes that $G$ is finite.
    \end{proof}

\medskip

\section{Uniform pro-$p$ groups}

We first recall some basic facts about $p$-adic analytic groups (Lie groups over $\Q_p$). The main references for this section are \cite{DSMN} and \cite{lazard}.

Let $\Gamma$ be a $p$-adic  analytic pro-$p$ group: the profinite group $\Gamma$ is a closed subgroup of $\Gl_m(\Z_p)$ for some integer $m$. The group $\Gamma$ is called \emph{powerful} if $ [\Gamma,\Gamma] \subset \Gamma^p$ ($ [\Gamma,\Gamma] \subset \Gamma^4$ when $p=2$); a powerful pro-$p$ group $\Gamma$ is said  \emph{uniform} if it is torsion free. Let $\dim(\Gamma)$ be the dimension of  $\Gamma$ as analytic variety.

\begin{theo}
Every  $p$-adic analytic pro-$p$ group contains an open uniform subgroup. 
\end{theo}

For $i \geq 1$, denote by  $\Gamma_{i+1}=\Gamma_i^p[\Gamma,\Gamma_i]$ where $\Gamma_1=\Gamma$: it is the  $p$-central descending series of the   pro-$p$ group $\Gamma$. 

\begin{theo}
A powerful pro-$p$ group $\Gamma $ is uniform if and only if for $i\geq 1$,
the map $x\mapsto x^p$ induces an isomorphism between $\Gamma_{i}/\Gamma_{i+1}$ and $ \Gamma/\Gamma_2$.
\end{theo}

\begin{rema} 
It is conjectured that a torsion-free $p$-adic analytic pro-$p$ group satisfying
   $d_p(\Gamma)= \dim(\Gamma)$ must be uniform.
This is known to be the case when  $ \dim(\Gamma) < p$, see  \cite{Klopsch-Snopce}.
\end{rema}

\subsection{Additive law and automorphisms} \label{section-additive}
   
Let $\Gamma$ be a uniform pro-$p$ group.
If $\{x_1, \cdots, x_d\}$ is a minimal system of (topological) generators of $\Gamma$, then
  $\{x_1\Phi(\Gamma),\cdots, x_d\Phi(\Gamma\}$ forms a basis of $\Gamma^{\pel}=\Gamma/\Phi(\Gamma)$.
The group $\Gamma $ being uniform, the morphism $x\mapsto x^{p^n}$ induces an isomorphism $\psi_n$ between $\Gamma$ and $\Gamma_{n+1}$. By taking the limit on the  $p^n$th roots,  the group $\Gamma$ can be equipped with an additive law {(and we denote by $\Gamma_+$ this "new" group)}. More precisely, put $x+_n y = \psi_{n}^{-1}(x^{p^n}y^{p^n})$ and  $$x+y:= \lim_{n \rightarrow \infty} (x+_n y).$$ 
Then  $\Gamma_{+} := \Z_p x_1 \oplus \cdots \oplus  \Z_p x_d$ is a group isomorphic to $\Z_p^d$.

\begin{theo}[\cite{DSMN}, Theorem 4.9] \label{unicite}
Let $\Gamma$ be a uniform pro-$p$ group. Then $\Gamma=\overline{\langle x_1 \rangle}\cdots \overline{\langle x_d\rangle}$. In other words, for every $x\in \Gamma$, there exists a unique  $d$-tuple $(a_1,\cdots, a_d)\in \Z_p^d$ such that $x=x_1^{a_1}\cdots x_n^{a_n}$.
Moreover the map
$$\begin{array}{rcl}
\varphi : \Gamma & \longrightarrow & \Gamma_{+} \\
x= x_1^{a_1} \cdots x_d^{a_d} & \mapsto & a_1 x_1 \oplus \cdots \oplus a_n x_n
\end{array}$$
is a homeomorphism (not necessarily of groups).
\end{theo}

Let us fix $\sigma \in \Aut(\Gamma)$. It is not difficult to see that $\sigma(x)+_n \sigma(y)=\sigma(x+_n y)$. Hence, by passing to the limit, the action of  $\sigma$  
becomes a linear action on $\Gamma_{+}$, \emph{i.e.} $\sigma \in \Gl_d(\Z_p)$ (see \S 4.3 of \cite{DSMN}). One needs more to determine the Galois structure. 

\begin{theo}
The map  $\varphi$ induces an isomorphism of  $\fq_p[\langle \sigma \rangle]$-modules between $\Gamma^{\pel}$ and $\Gamma_{+}/p$.
\end{theo}

\begin{proof}
It suffices to note that $\varphi$ induces an isomorphism of groups between  $\Gamma^{\pel}$ and $\Gamma_{+}/p$: it is exactly Corollary 4.15 of \cite{DSMN}.
\end{proof}

\subsection{Semisimple action and fixed points}\label{semisimpleaction}

Recall the assumption that   $\sigma \in \Aut(\Gamma)$  is of finite order $\ell$, a prime number different from $p$.
We first recall a result which is valid in a more general context (not only for uniform groups), for  $\ell=2$, see \cite{HR} ; for  larger $\ell$ coprime to $p$, see \cite{Boston},  \cite{Wingberg2} and \cite{greenberg}. 
 
 \medskip

The action $\sigma$ on $\Gamma_{+}$ is semisimple, and the $\Z_p[\langle\sigma \rangle]$-module $\Gamma_{+}$ is projective. Hence the action of $\sigma$ on  $\Gamma_{+}/p$ lifts  uniquely  (up to isomorphism) to $\Gamma_{+}$ and then, one can find a family of  generators of  $\Gamma$ respecting this action, or that respects the  decomposition of $\Gamma_{+}$ as projective modules. 
If the action  of  $\sigma$ on the generators   $x_1,\cdots, x_d$ of $\Gamma_{+}$  has  $(a_{i,j})_{i,j}$ for matrix with  coefficients in $\Z_p$, we get $\displaystyle{\sigma(x_i)=\sum_{j=1}^d a_{i,j} x_j} \in \Gamma_{+}$, which becomes in  $\Gamma$: $\displaystyle{\sigma(x_i)=\prod_{j=1}^d x_j^{a_{i,j}}}$. 
 
 \medskip 
 Put $$r= \dim_{\fq_p} (\Gamma^{\pel})_\sigma \cdot$$ The integer $r$ corresponds to the dimension of the $\fq_p$-vector subspace of $\Gamma^{\pel}$ consisting of fixed points of $\Gamma^{\pel}$. The integer $r$ is the number of times that the trivial character appears  in the decomposition of the  $\fq_p[\langle\sigma\rangle]$-module $\Gamma^{\pel}$.

 \medskip
 
Now let us fix    a basis $\{x_1,\cdots, x_d\}$ of $\Gamma$ respecting  the decomposition  into irreducible characters following the action of  $\Gamma$. Suppose moreover that the  set  $\{x_1\cdots, x_r\}$ corresponds   to a basis of $(\Gamma^{\pel})_\sigma$. In particular,  $\sigma(x_i)=x_i$
 for $i=1,\cdots, r$. Clearly $\displaystyle{ \overline{\langle x_1\rangle}\cdots \overline{\langle x_r \rangle} \subseteq \gamchapzero}$.
For the reverse inclusion, one supposes moreover that  $\ell$ divides $p-1$.

 \medskip

{In the rest of this section, we will rely heavily on the following result.}

\begin{prop}\label{pointfixe-uniform} Let $\Gamma$ be a uniform  pro-$p$ group  and let $\sigma \in \Aut(\Gamma)$ be of order  $\ell$. Suppose that $\ell~|~(p-1)$. Then, with the notation introduced above, we have  $$\gamchapzero = \overline{\langle x_1\rangle}\cdots \overline{\langle x_r \rangle}=\langle x_1,\cdots, x_r\rangle \cdot$$
 \end{prop}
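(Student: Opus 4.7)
The inclusion $\overline{\langle x_1\rangle}\cdots\overline{\langle x_r\rangle}\subseteq \gamchapzero$ is immediate, since each $x_i$ with $i\leq r$ is fixed by $\sigma$ by construction. The content of the proposition is the reverse inclusion, together with the identifications of the various closed subgroups generated by $x_1,\dots,x_r$.

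The first step is to leverage the hypothesis $\ell\mid p-1$ so that the primitive $\ell$-th roots of unity lie in $\Z_p$. Combined with the semisimplicity of the action of $\sigma$ on $\Gamma_+\simeq \Z_p^d$ recalled in \S\ref{semisimpleaction}, this lets us choose the basis $\{x_1,\dots,x_d\}$ to diagonalize $\sigma$ on $\Gamma_+$: there exist roots of unity $\zeta_1,\dots,\zeta_d\in\Z_p^\times$ of order dividing $\ell$ such that $\sigma(x_i)=\zeta_i x_i$ in $\Gamma_+$, with $\zeta_i=1$ exactly for $i\leq r$ (by the definition of $r$ and the arrangement of the basis so that $x_1,\dots,x_r$ represent a basis of $(\Gamma^{\pel})_\sigma$). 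Translating back to $\Gamma$ via the formula recalled just before the proposition, this means $\sigma(x_i)=x_i^{\zeta_i}$ in $\Gamma$ itself.

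The second step is to apply Theorem \ref{unicite}: every $y\in\Gamma$ has a unique expression $y=x_1^{a_1}\cdots x_d^{a_d}$ with $(a_1,\dots,a_d)\in\Z_p^d$. Since $\sigma$ is a continuous group automorphism of $\Gamma$, we get
\[
\sigma(y)=\sigma(x_1)^{a_1}\cdots \sigma(x_d)^{a_d}=x_1^{\zeta_1 a_1}\cdots x_d^{\zeta_d a_d}.
\]
If $y$ is a fixed point, the uniqueness in Theorem \ref{unicite} forces $\zeta_i a_i=a_i$, i.e.\ $(\zeta_i-1)a_i=0$, for every $i$. The key arithmetic input is that for $i>r$, $\zeta_i$ is a non-trivial $\ell$-th root of unity with $\ell\neq p$, so $\zeta_i-1$ is a unit of $\Z_p$; hence $a_i=0$ for all $i>r$, and $y=x_1^{a_1}\cdots x_r^{a_r}\in \overline{\langle x_1\rangle}\cdots\overline{\langle x_r\rangle}$.

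Thus the fixed-point set of $\sigma$ in $\Gamma$ is exactly $\overline{\langle x_1\rangle}\cdots\overline{\langle x_r\rangle}$. This is already a subgroup of $\Gamma$ (the fixed set of any automorphism is a subgroup, and here it is moreover closed), so the closed subgroup $\gamchapzero$ it generates coincides with it, giving the first equality. For the second equality, note $\langle x_1,\dots,x_r\rangle$ denotes the closed subgroup of $\Gamma$ generated by $x_1,\dots,x_r$; it clearly contains each $\overline{\langle x_i\rangle}$ and hence their product, while conversely $\overline{\langle x_1\rangle}\cdots\overline{\langle x_r\rangle}$, being closed and containing $x_1,\dots,x_r$, contains $\langle x_1,\dots,x_r\rangle$. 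The main (and only real) obstacle in this approach is ensuring the compatibility between the multiplicative structure of $\Gamma$ and the additive structure on $\Gamma_+$ when applying $\sigma$ to a product expression; this is settled cleanly by using that $\sigma$ is a group automorphism of $\Gamma$ together with the diagonal shape $\sigma(x_i)=x_i^{\zeta_i}$ obtained from the semisimple decomposition.
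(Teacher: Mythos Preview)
Your proof is correct and follows essentially the same route as the paper's: diagonalize $\sigma$ on the chosen basis via $\ell\mid p-1$, write an arbitrary fixed point in the unique product form of Theorem~\ref{unicite}, apply $\sigma$ term by term, and use uniqueness to force $a_i=0$ for $i>r$. Your argument is slightly more explicit in noting that $\zeta_i-1\in\Z_p^\times$ for $i>r$ (the paper just uses that $\Z_p$ is a domain and $\lambda_i\neq 1$), and in spelling out why the fixed-point set equals the closed subgroup it generates, but the core mechanism is identical.
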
 

\begin{proof} As  $\ell$ divides $p-1$, the  $\Q_p$-irreducible characters of $\langle \sigma \rangle$ are all of   degree $1$. In particular, by the choice of the $x_i$, we get that for $i>r$, $\sigma(x_i)=x_i^{\lambda_i}$, where $\lambda_i \in \Z_p \backslash \{1\}$. 

 Take $x \in \gamchapzero$ and let us write $x=x_1^{a_1}\cdots x_d^{a_d}$.
  Then
 $x=\sigma(x)$, if and only if,
$$\prod_{i=1}^d x_i^{a_i} = \prod_{i =1}^d \sigma(x_i)^{a_i} \cdot$$ As for $i=1,\cdots, r$, one has  $\sigma(x_i)=x_i$, we get
$$ \prod_{i>r}^d x_i^{a_i} = \prod_{i>r} x_i^{\lambda_i a_i}\cdot$$
Thanks to the uniqueness of the product in Theorem  \ref{unicite}, one deduces that for $i>r$,   $\lambda_i a_i= a_i$, \emph{i.e.},  $a_i=0$ because $\lambda_i \neq 1$.
One has proven that $\displaystyle{\gamchapzero = \overline{\langle x_1\rangle}\cdots \overline{\langle x_r \rangle}}$. On the other hand, trivially $\langle x_1,\cdots, x_r\rangle\subset  \gamchapzero$  and $\overline{\langle x_1\rangle}\cdots \overline{\langle x_r \rangle} \subset \langle x_1,\cdots, x_r\rangle$, which prove the desired equalities.
\end{proof}

\begin{rema}
In the above considerations, the uniform property of  $\Gamma$ is essential. By Proposition \ref{pointfixe-uniform} and when $ \ell~|~(p-1)$, the group $\gamchap$ is the normal subgroup of  $\Gamma$ generated by the conjugates of the $x_1,\cdots, x_r$. 
\end{rema}

\begin{rema}
The condition $\ell~|~(p-1)$ implies  that the irreducible characters of $\sigma$ are of degree $1$ and then $\varphi \circ \sigma = \sigma \circ \varphi$.
The existence of fixed points of $\sigma$ can be detected in   $\Gamma$ or in  $\Gamma_{+}$.
Let us remark that we can omit the condition "$\ell~|~(p-1)$" when  for all irreducible representations of  $\Z_p$-basis $\{ x_{i_1}, \cdots, x_{i_t}\}$, one has $x_{i_j}x_{i_k}=x_{i_k}x_{i_j}$. If the group $\Gamma$ is obtained by a certain exponential of a  $\Z_p$-Lie algebra $\Ell$ (see  \S \ref{section-lie}), the condition on the commutativity can be tested  in $\Ell$: this remark should open some other perspectives.
\end{rema}

We recover here, with a weaker hypothesis, \emph{i.e.} $\ell~|~(p-1)$, the following corollary   used in \cite{Hajir-Maire}:

\begin{coro}\label{coro-pointfixe} Let $\Gamma$ be a uniform pro-$p$ group.
Under previous conditions,  $\displaystyle{\gamchapzero=\{e\}}$ if and only if $(\Gamma^{\pel})_\sigma=\{\overline{e}\}$.
\end{coro}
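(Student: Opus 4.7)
The plan is to derive this corollary as an essentially formal consequence of Proposition~\ref{pointfixe-uniform}, by reducing the claimed equivalence to the statement that the integer $r := \dim_{\fq_p}(\Gamma^{\pel})_\sigma$ introduced in \S\ref{semisimpleaction} is zero.

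First I would fix a minimal system of generators $\{x_1,\ldots,x_d\}$ of $\Gamma$ adapted to the semisimple $\sigma$-action (as constructed in \S\ref{semisimpleaction}), so that the images of $x_1,\ldots,x_r$ form an $\fq_p$-basis of $(\Gamma^{\pel})_\sigma$ and the remaining images span the sum of the non-trivial isotypic components. With this choice, Proposition~\ref{pointfixe-uniform} yields the identity
$$\gamchapzero \;=\; \overline{\langle x_1\rangle}\cdots \overline{\langle x_r\rangle}.$$

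The $(\Leftarrow)$ direction is then immediate: if $(\Gamma^{\pel})_\sigma=\{\overline{e}\}$, then $r=0$, the product above is empty, and hence $\gamchapzero=\{e\}$. For the $(\Rightarrow)$ direction I would argue contrapositively. Suppose $r\geq 1$; then $x_1$ lies in $\gamchapzero$ by the displayed identity. Since $\{x_1,\ldots,x_d\}$ is a minimal generating system, the image of $x_1$ in $\Gamma^{\pel}$ is non-zero by the Burnside basis theorem, so in particular $x_1\notin\Phi(\Gamma)$ and $x_1\neq e$. Because $\Gamma$ is uniform and hence torsion-free, the closed subgroup $\overline{\langle x_1\rangle}$ is isomorphic to $\Z_p$ and is therefore non-trivial, forcing $\gamchapzero\neq\{e\}$. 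This is the desired contrapositive.

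There is no genuine obstacle: all the substantive content has been packaged into Proposition~\ref{pointfixe-uniform}, and the extra ingredients needed (semisimplicity of the $\sigma$-action on $\Gamma_+/p$ guaranteed by $(\ell,p)=1$, and the one-dimensionality of the irreducible $\Q_p$-characters of $\langle\sigma\rangle$ guaranteed by $\ell\mid p-1$, which together permit the adapted basis) are already in place under the blanket hypotheses of the subsection. The only subtlety worth flagging explicitly in the write-up is the use of torsion-freeness of $\Gamma$ to pass from $x_1\neq e$ to $\overline{\langle x_1\rangle}\neq\{e\}$: without uniformity one could not in general rule out that $x_1$ generates a trivial closed subgroup after some power collapses.
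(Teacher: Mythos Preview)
Your proof is correct and matches the paper's approach: the corollary is presented there without a separate proof, as an immediate consequence of Proposition~\ref{pointfixe-uniform}. One minor remark: the final paragraph's appeal to torsion-freeness is unnecessary---once you have $x_1\neq e$ and $x_1\in\gamchapzero$, the conclusion $\gamchapzero\neq\{e\}$ is immediate, with no need to argue that $\overline{\langle x_1\rangle}\simeq\Z_p$ (and your stated worry that ``$x_1$ generates a trivial closed subgroup'' cannot occur for any non-identity element, torsion or not).
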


  \begin{coro} \label{coro-generateurs}
  {Under the conditions of Proposition \ref{pointfixe-uniform}, we have
   $d_p (\Gamma/\gamchap) = d-r$ where $d=d_p\Gamma$ and $r=\dim_{\fq_p} (\Gamma^{\pel})_\sigma$.}
  \end{coro}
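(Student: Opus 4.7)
The plan is to identify $d_p(\Gamma/\gamchap)$ by projecting to the Frattini quotient $\Gamma^{\pel}$ and reading off the codimension of the image of $\gamchap$, using the explicit generators supplied by Proposition \ref{pointfixe-uniform}. The key observation is that although $\gamchap$ is defined as a normal closure, its image in the abelian quotient $\Gamma^{\pel}$ coincides with the image of $\gamchapzero$, so the normal-closure step contributes nothing new once we pass to $\Gamma/\Phi(\Gamma)$.

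First I would recall that $d_p(\Gamma/\gamchap) = \dim_{\fq_p}\bigl((\Gamma/\gamchap)/\Phi(\Gamma/\gamchap)\bigr)$, and that $\Phi(\Gamma/\gamchap)$ is the image in $\Gamma/\gamchap$ of $\Phi(\Gamma)$. Hence
\[
\Gamma/(\gamchap\cdot\Phi(\Gamma)) \;\simeq\; \Gamma^{\pel}/\pi(\gamchap),
\]
where $\pi:\Gamma\twoheadrightarrow\Gamma^{\pel}$ denotes the natural projection. Because $\Gamma^{\pel}$ is abelian, $\pi(\gamchap)=\pi(\gamchapzero)$, so everything reduces to computing the image of $\gamchapzero$ in $\Gamma^{\pel}$.

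Next, I would invoke Proposition \ref{pointfixe-uniform}, which gives $\gamchapzero = \langle x_1,\ldots,x_r\rangle$ for the chosen basis $\{x_1,\ldots,x_d\}$ of $\Gamma$ adapted to the semisimple $\sigma$-action. The construction of this basis in \S \ref{semisimpleaction} ensures that $\{\pi(x_1),\ldots,\pi(x_r)\}$ is an $\fq_p$-basis of the isotypic component $(\Gamma^{\pel})_\sigma$. Consequently, $\pi(\gamchap) = (\Gamma^{\pel})_\sigma$, which has dimension $r$ over $\fq_p$.

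Finally, since the action of $\sigma$ on $\Gamma^{\pel}$ is semisimple (this is where $\ell\mid p-1$ and the coprimality of $\ell$ with $p$ intervene, through Maschke's theorem), $\Gamma^{\pel}$ decomposes as $(\Gamma^{\pel})_\sigma$ plus a complementary $\fq_p[\langle\sigma\rangle]$-submodule of dimension $d-r$. Therefore
\[
d_p(\Gamma/\gamchap) \;=\; \dim_{\fq_p}\bigl(\Gamma^{\pel}/(\Gamma^{\pel})_\sigma\bigr) \;=\; d-r,
\]
which is the desired equality. No serious obstacle is anticipated; the only point to be careful about is the passage from $\gamchap$ (a normal closure) to $\gamchapzero$, which is harmless after abelianization, and the identification of the image of $\{x_1,\ldots,x_r\}$ with a basis of the fixed subspace, which is built into the choice of generators from \S \ref{semisimpleaction}.
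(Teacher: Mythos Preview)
Your proof is correct and follows essentially the same approach as the paper: both reduce the computation of $d_p(\Gamma/\gamchap)$ to the Frattini quotient $\Gamma^{\pel}$ and use that the image of $\gamchap$ there is exactly the $r$-dimensional subspace spanned by $\pi(x_1),\ldots,\pi(x_r)$. The paper phrases this as a generator-counting argument (the $x_i\gamchap$ with $i>r$ generate $G$, and minimality is checked by pushing a hypothetical redundancy down to $\Gamma^{\pel}$), whereas you compute the quotient $\Gamma^{\pel}/\pi(\gamchap)$ directly; the content is the same.
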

  
  \begin{proof}
  Put $G= \Gamma/\gamchap$. Consider the minimal system of generators $(x_i)_{i=1,\cdots,d}$ of $\Gamma$ introduced above, satisfying in particular that 
 $\sigma(x_i)=x_i$ for $i=1,\cdots, r$.  The group $\gamchap $ contains the elements $x_1,\cdots, x_r$. The  quotient $G$ is  topologically generated by the classes $x_i\gamchap$, $i>r$, so $d_p G \leq d-r$.
 In fact, the classes $(x_i\gamchap)_{i>r}$ form a minimal system  of generators of $G$: indeed, if not it would show that (possibly after renumbering) the   class $x_{r+1}\gamchap$ can be expressed  in terms of the classes $x_i\gamchap$, $i\geq r+2$, which would imply that the class $x_{r+1}\Phi(\Gamma)$ could be written in terms of the classes $\big(x_j\Phi(\Gamma)\big)_{j\neq r+1}$, which contradicts  the minimality of $\{x_1,\cdots, x_d\}$. Hence $d_pG =d-r$.
  \end{proof}
  
\subsection{On the group $\gamchap$}

 Let us conserve the notations and assumptions of the preceding subsection; in particular $\Gamma$ is uniform,  $\sigma \in \Aut(\Gamma)$ is of prime order $\ell$ and $\ell~|~(p-1)$. Recall that  $\displaystyle{\gamchapzero = \langle x_1 \cdots  x_r \rangle}$ and put $G=\Gamma/\gamchap$. 

\medskip

By Proposition \ref{fini1}, if $\Gamma$ is {FAb}, the group $\gamchap$ is open in $\Gamma$, the quotient $G=\Gamma/\gamchap$ is finite and $\Z_p[[G]] \simeq \Z_p[G]$.

\begin{rema}\label{longueur-derivee}
If $\Gamma$ is {FAb}, the group $G=\Gamma/\gamchap$ is a finite  $p$-group of $p$-rank at most   $d$ and having an automorphism  $\sigma$ acting without non-trivial fixed points. 
By Shalev \cite{Shalev}, it is possible to give 
 an upper bound for the solvability length ${\rm dl}(G)$ of $G$ which depends only on  $d$: ${\rm dl}(G) \leq 2^{d+1}-d-4+\lceil \log_2 d\rceil$. 
By taking the proof  of Shalev (lemma 4.4, lemma 4.5 and Proposition 4.6 of \cite{Shalev}), we remark 
that a key point is the number of distinct eigenvalues of $\sigma$. We note that in \cite{Shalev}, a slightly different notion of $G$ being \og uniform \fg \ is used: in Shalev's terminology, for  a such group $G$, one has ${\rm dl}(G) \leq 2^{\ell-1}-1$.  If we remove the condition of $G$ being uniform in the sense of Shalev, we have a weaker bound ${\rm dl}(G) \leq d(2^{\ell-1}-1) + \lceil \log_2( d) \rceil$. One should compare these bounds to the bounds  $m(\ell)$ coming from  \cite{RZ}, Corollary 4.6.10 (see Proposition \ref{FPF}) and given in the introduction (by recalling  that ${\rm dl}(G) \leq \log_2(n(\ell) +1)$).
\end{rema}

\medskip

 Recall now as  $G$ is a pro-$p$ group,  the ring $\Z_p[[G]]$ (resp. $\fq_p[[G]]$) is a local ring, with maximal ideal the augmentation ideal $\ker(\Z_p[[G]] \rightarrow \fq_p)$ (resp. $\ker(\fq_p[[G]] \rightarrow \fq_p)$). 
 The ring  $\Z_p[[G]]$ (resp. $\fq_p[[G]]$) acts  by conjugation on $\gamchap/[\gamchap,\gamchap]$ (resp. $\gamchappel$). 
 The following proposition gives  a system of minimal  generators of this action.
 
\begin{prop} \label{generateurs} 

\begin{enumerate}
\item[(i)]  The cosets $x_1 \Phi(\gamchap),\cdots, x_r\Phi(\gamchap)$  form a minimal system  of generators of the quotient $\gamchappel$   of $\gamchap$ seen as  $\fq_p[[G]]$-module. In particular $d_p \gamchap \geq r$.
\item[(ii)] The automorphism $\sigma$ acts trivially on $\displaystyle{\big(\Gamma_\sigma^{ab}\big)_G}$.
\end{enumerate}
\end{prop}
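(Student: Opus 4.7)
The plan is to leverage Proposition~\ref{pointfixe-uniform} together with Nakayama's lemma for the pro-local ring $\fq_p[[G]]$. For part (i), Proposition~\ref{pointfixe-uniform} furnishes the identity $\gamchapzero=\langle x_1,\ldots,x_r\rangle$, so $\gamchap$ is generated as a closed normal subgroup of $\Gamma$ by $x_1,\ldots,x_r$. Because conjugation by $\gamchap$ is trivial on its own Frattini quotient, $\gamchappel$ inherits the structure of an $\fq_p[[G]]$-module, and the cosets $x_1\Phi(\gamchap),\ldots,x_r\Phi(\gamchap)$ generate it. By Nakayama, establishing minimality of this system reduces to showing that their images are $\fq_p$-linearly independent in the coinvariants $\big(\gamchappel\big)_G$.

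The key step I propose is to invoke the natural map $\gamchappel\to \Gamma^{\pel}$ induced by the inclusion $\gamchap\hookrightarrow \Gamma$. Because the conjugation action of $\Gamma$ on the abelian $p$-elementary quotient $\Gamma^{\pel}$ is trivial, this map is $G$-equivariant with trivial action on the target and hence factors through $\big(\gamchappel\big)_G\to \Gamma^{\pel}$. The image contains the classes of $x_1,\ldots,x_r$ in $\Gamma^{\pel}$, which by the $\sigma$-stable choice of basis made in \S\ref{semisimpleaction} are part of an $\fq_p$-basis of $\Gamma^{\pel}$ and therefore span an $r$-dimensional subspace. This yields $\dim_{\fq_p}\big(\gamchappel\big)_G\geq r$; combined with the matching upper bound $\dim_{\fq_p}\big(\gamchappel\big)_G\leq r$ coming from generation, one gets equality and the minimality of the generating system. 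The bound $d_p\gamchap\geq r$ is then immediate from the surjection $\gamchappel\twoheadrightarrow \big(\gamchappel\big)_G$.

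For part (ii), the plan is to lift (i) to the pro-$p$ abelian group $\big(\Gamma_\sigma^{ab}\big)_G$, whose reduction modulo $p$ is exactly $\big(\gamchappel\big)_G$. By (i), the latter has $\fq_p$-basis the classes of $x_1,\ldots,x_r$, so the topological Nakayama lemma for pro-$p$ $\Z_p$-modules implies that $x_1,\ldots,x_r$ topologically generate $\big(\Gamma_\sigma^{ab}\big)_G$. Since $\sigma(x_i)=x_i$ for every $i\leq r$, and since $\sigma$ descends to a continuous $\Z_p$-linear automorphism of the coinvariants via the compatibility $\sigma(g\cdot \gamma)=\sigma(g)\cdot \sigma(\gamma)$ (which ensures that $I_G\cdot \Gamma_\sigma^{ab}$ is $\sigma$-stable), the automorphism $\sigma$ fixes a topological generating set and therefore acts trivially on the entire module.

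The step I anticipate as the main obstacle is the linear-independence verification in (i): the normal-closure generation of $\gamchap$ is a priori a weak statement at the Frattini level, and the argument rests on a delicate interplay between this generation and the way the $\sigma$-stable decomposition constructed in \S\ref{semisimpleaction} persists inside $\Gamma^{\pel}$. Once (i) is in place, the passage to (ii) is essentially formal through Nakayama.
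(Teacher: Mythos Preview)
Your proposal is correct and rests on the same two pillars as the paper: Proposition~\ref{pointfixe-uniform} to see that $\gamchap$ is normally generated by $x_1,\ldots,x_r$, and Nakayama over the local ring $\fq_p[[G]]$ to reduce minimality to the equality $\dim_{\fq_p}\big(\gamchappel\big)_G=r$. The difference lies only in how the lower bound $\dim_{\fq_p}\big(\gamchappel\big)_G\geq r$ is obtained. The paper writes out the five-term exact sequence
\[
H_2(\Gamma,\fq_p)\longrightarrow H_2(G,\fq_p)\longrightarrow \big(\gamchappel\big)_G\longrightarrow \Gamma^{\pel}\longrightarrow G^{\pel}
\]
and performs a character comparison for the $\sigma$-action (using that $\sigma$ is fixed-point-free on $G^{\pel}$ and trivial on $\big(\gamchappel\big)_G$). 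You instead isolate only the middle map $\big(\gamchappel\big)_G\to \Gamma^{\pel}$ and note directly that it sends the classes of $x_1,\ldots,x_r$ to elements of the chosen basis of $\Gamma^{\pel}$, which are therefore independent. Your route is the more elementary one: it bypasses the $H_2$ terms and the character bookkeeping entirely. The paper's approach, in exchange, yields the extra information recorded in the remark following the proposition, namely that $H_2(\Gamma,\fq_p)\twoheadrightarrow H_2(G,\fq_p)$. Part~(ii) is argued identically in both: once $\big(\Gamma_\sigma^{ab}\big)_G$ is known to be topologically generated by the $\sigma$-fixed elements $x_1,\ldots,x_r$, triviality of the $\sigma$-action is immediate.
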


\begin{proof}
(i) By Proposition \ref{pointfixe-uniform}, the group $\displaystyle{\gamchapzero}$ is topologically generated by the elements  $x_1,\cdots, x_r$: they form a minimal system of generators.  Thus the quotient $\gamchappel$ is topologically generated by the $G$-conjuguates $G\cdot x_i\Phi(\gamchap)$ of the classes of the  $x_i$, $i=1,\cdots, r$, in $\gamchappel$. 
Consider now the exact sequence $$ \cdots H_2(\Gamma,\fq_p) \longrightarrow H_2(G,\fq_p) \longrightarrow \big(\gamchappel\big)_G \longrightarrow \Gamma^{\pel} \twoheadrightarrow G^{\pel},$$
 coming from the short exact sequence $1\longrightarrow \gamchap\longrightarrow \Gamma \longrightarrow G \longrightarrow 1$. 
The automorphism $\sigma$ acts ont these exact sequences. As the group $\gamchap$ contains the elements $x_1,\cdots, x_r$, the action of $\sigma$ on $G^{\pel}$ has no non-trivial fixed points. Moreover as  $\gamchappel$ is generated by the   $G$-conjuguates of the classes of the  $x_i$, $i=1,\cdots, r$, one gets that $\sigma$ acts trivially on $\big(\gamchappel\big)_G$. By comparing the character of the action of $\sigma$ on the initial exact sequence, one obtains that $d_p \big(\gamchappel\big)_G =r$. Thus by  Nakayama's lemma, the classes $x_1\Phi(\gamchap),\cdots, x_r\Phi(\gamchap)$ form a minimal system of generators of the $\fq_p[[G]]$-module $\gamchappel$. In conclusion we get  $d_p \gamchap \geq r$.

Now (ii)  is obvious: the group $\displaystyle{\Gamma_\sigma^{ab}}$ is generated by the  $G$-conjuguates of the  classes of the $x_i\Phi(\gamchap)$, $i=1,\cdots,r$, hence $\sigma$ acts trivially on   
$\displaystyle{\big(\Gamma_\sigma^{ab}\big)_G}$.
\end{proof}

\begin{rema} As consequence of the proof, we get an exact sequence:
$$1\longrightarrow \big(\gamchappel\big)_G \longrightarrow \Gamma^{\pel} \longrightarrow G^{\pel} \longrightarrow 1,$$
and then  $H_2(\Gamma,\fq_p) \twoheadrightarrow H_2(G,\fq_p)$.
\end{rema}

Recall that for a uniform group $\Gamma$ of dimension $d$, for all closed subgroup $\U$ of $\Gamma$, one has  $d_p \U \leq d$.

\begin{rema} Assume $G$ finite. On can define its maximal free sub-$\fq_p[G]$-module $\big(\gamchappel\big)_0$ of  $\gamchappel$.
As the $p$-rank of $\gamchap$ is smaller than $d$, one sees that   $\displaystyle{\big(\gamchappel\big)_0}$  is trivial as soon as $|G| >  d$.
\end{rema}

\medskip

We now recall a notion introduced in Definition \ref{defi-fpm}: the action of $\sigma$ on the group $\Gamma$  is called {\fpm} (FPMF) if $G=\Gamma/\Gamma_\sigma$ does not act trivially on $\gamchap/\Phi(\gamchap)=\gamchappel$.

\begin{prop}\label{critere-non-size} If the action of $\sigma$ on $\Gamma$ is not \fpm, then $\gamchap=\gamchapzero$ and $d_p \gamchap = r$. 
\end{prop}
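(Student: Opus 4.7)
The plan is to unpack the hypothesis ``not FPMF'' directly in terms of the Frattini quotient and then apply the Burnside basis theorem to $\gamchap$ to pull the conclusion back from $\gamchappel$ to $\gamchap$ itself. Recall from Definition \ref{defi-fpm} that the action being \emph{not} FPMF is the statement that the conjugation action of $G = \Gamma/\gamchap$ on $\gamchap/\Phi(\gamchap) = \gamchappel$ is trivial.

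First I would exploit Proposition \ref{generateurs}(i), which says that the classes $x_1\Phi(\gamchap), \dots, x_r\Phi(\gamchap)$ form a minimal system of generators of $\gamchappel$ viewed as an $\fq_p[[G]]$-module. Under the standing assumption that $G$ acts trivially on $\gamchappel$, the $\fq_p[[G]]$-module structure collapses to the underlying $\fq_p$-vector space structure, so these $r$ classes already generate $\gamchappel$ as an $\fq_p$-vector space. Consequently $d_p\gamchap = \dim_{\fq_p}\gamchappel = r$.

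Next I would show $\gamchap = \gamchapzero$. By Proposition \ref{pointfixe-uniform}, $\gamchapzero = \langle x_1,\dots,x_r\rangle$, so the images of $x_1,\dots,x_r$ in $\gamchap/\Phi(\gamchap)$ lie in the subgroup $\gamchapzero\cdot\Phi(\gamchap)/\Phi(\gamchap)$ and, by the previous step, span the whole $\fq_p$-vector space $\gamchappel$. Therefore $\gamchap = \gamchapzero\cdot\Phi(\gamchap)$. The Burnside basis theorem applied to the (finitely generated) pro-$p$ group $\gamchap$ then forces $\gamchapzero = \gamchap$, since a closed subgroup whose image in the Frattini quotient is everything must be the whole group.

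I do not expect any serious obstacle: once the definition of FPMF is rewritten as triviality of the $G$-action on $\gamchappel$, the statement reduces to a routine application of Nakayama/Burnside combined with the generator count given by Propositions \ref{pointfixe-uniform} and \ref{generateurs}. The only point that requires some care is making sure that one uses the \emph{closed} normal closure when forming $\gamchap$, so that the Burnside basis theorem can be invoked for the pro-$p$ subgroup $\gamchap$ itself; but this is built into the definition of $\gamchap$ given just before Proposition \ref{pointfixe-uniform}.
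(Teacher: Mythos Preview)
Your argument is correct and is precisely the unpacking of the paper's one-line proof, which simply reads ``It is a consequence of Proposition \ref{generateurs}.'' You have spelled out exactly how that consequence works: triviality of the $G$-action collapses the $\fq_p[[G]]$-generating set of Proposition \ref{generateurs}(i) to an $\fq_p$-generating set of size $r$, giving $d_p\gamchap = r$, and then Burnside together with $\gamchapzero = \langle x_1,\dots,x_r\rangle$ from Proposition \ref{pointfixe-uniform} yields $\gamchap = \gamchapzero$.
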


\begin{proof}
It is a consequence of Proposition \ref{generateurs}.
\end{proof}

\begin{prop}\label{size}
Let $\Gamma$ be a uniform group of dimension dimension $d>1$. Suppose   $\sigma\in \Aut(\Gamma)$ of order   $\ell=2$. Recall that $r=\dim_{\fq_p} (\Gamma^{\pel})_{\sigma}$.
\begin{enumerate}
\item[(i)] If $t_\sigma(\Gamma)=(1,d-1)$ and if $\gamchap$ is open (which is the case if  $\Gamma$ is {FAb}), then the action of $\sigma$ on  $\Gamma$ is \fpm.
\item[(ii)] If $t_\sigma(\Gamma)=(d-1,1)$ and if   $d_p \gamchap >r$, then  $\gamchap$ is uniform.
\end{enumerate}
\end{prop}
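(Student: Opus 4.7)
\medskip

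\noindent\emph{Proof plan.} For part (i), the plan is to apply Proposition \ref{critere-non-size} in contrapositive form. Assume, for contradiction, that the action of $\sigma$ on $\Gamma$ is not \fpm. Then Proposition \ref{critere-non-size} gives $\gamchap=\gamchapzero$ and $d_p\gamchap=r=1$. Thus $\gamchap$ is topologically generated by a single element and is a closed subgroup of the torsion-free uniform group $\Gamma$, so $\gamchap$ is either trivial or isomorphic to $\Z_p$. In either case $\dim\gamchap\le 1$. But $\gamchap$ is assumed open in $\Gamma$, which forces $\dim\gamchap=\dim\Gamma=d$; since $d>1$, this is a contradiction, and (i) follows.

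For part (ii), where $r=d-1$ and $d_p\gamchap\ge d$, the plan is to check that $\gamchap$ is both torsion-free and powerful, which will yield uniformity. Torsion-freeness is automatic since $\gamchap$ is closed in the torsion-free uniform group $\Gamma$. For powerfulness, I would first fix a $\sigma$-adapted minimal system $(x_1,\dots,x_d)$ of generators of $\Gamma$ as in \S\ref{semisimpleaction}, with $\sigma(x_i)=x_i$ for $i\le d-1$ and, since $\ell=2\mid p-1$ forces $p$ odd and the nontrivial eigenvalue of an involution to be $-1$, with $\sigma(x_d)=x_d^{-1}$ in $\Gamma_+$. By Proposition \ref{pointfixe-uniform}, $\gamchapzero=\overline{\langle x_1\rangle}\cdots\overline{\langle x_{d-1}\rangle}$. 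The hypothesis $d_p\gamchap>r$ combined with Proposition \ref{generateurs}(i) allows me to extract one further generator $y\in\gamchap$, obtainable as a $\Gamma$-conjugate of some $x_i$ (or a product of such conjugates), so that $\{x_1,\dots,x_{d-1},y\}$ is a minimal topological generating system of $\gamchap$. A $\sigma$-eigenspace adjustment using the semisimple action on $\gamchappel$ then lets me arrange $y$ to be a $\sigma$-eigenvector modulo $\Phi(\gamchap)$.

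The main obstacle is to verify the powerful property $[\gamchap,\gamchap]\subseteq\gamchap^p$. The inclusion $[\gamchap,\gamchap]\subseteq[\Gamma,\Gamma]\subseteq\Gamma^p$ is immediate from the uniformity of $\Gamma$; the substantive step is to show $\Gamma^p\cap\gamchap\subseteq\gamchap^p$. My plan here is to exploit the bijectivity of the $p$-th power map on $\Gamma$ (the key feature of uniform groups recalled in \S\ref{section-additive}, underlying the additive structure $\Gamma_+$) together with the $\sigma$-stability of $\gamchap$: any element $z=w^p\in\gamchap$ has a unique $p$-th root $w\in\Gamma$, and by $\sigma$-equivariance of the $p$-th root map, $w$ lies in a $\sigma$-stable subset that can be pinned down via the additive decomposition $\Gamma_+=\Z_p x_1\oplus\cdots\oplus\Z_p x_{d-1}\oplus\Z_p x_d$. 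Using the choice of $y$ filling out the nontrivial eigenspace of $\sigma$ within $\gamchap$, a direct comparison in $\Gamma_+$ forces $w\in\gamchap$, yielding $z\in\gamchap^p$. Together with torsion-freeness, this gives that $\gamchap$ is uniform.
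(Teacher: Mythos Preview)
Your argument for part (i) is correct and matches the paper's proof exactly: contrapose via Proposition \ref{critere-non-size}, get $d_p\gamchap=1$, hence $\gamchap$ is procyclic of dimension $\le 1$, contradicting openness in $\Gamma$ since $d>1$.

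For part (ii), you are working much harder than necessary, and the sketch has a real gap. The paper's proof is a single line: since $\gamchap$ is a closed subgroup of the uniform group $\Gamma$ of dimension $d$, one always has $d_p\gamchap\le d$ (recalled just before the proposition); combined with the hypothesis $d_p\gamchap>r=d-1$, this forces $d_p\gamchap=d$, and it is a standard fact from \cite{DSMN} that a closed subgroup of a $d$-dimensional uniform group whose $p$-rank attains the maximum $d$ is itself uniform. Your alternative route --- verifying powerfulness by hand via $\Gamma^p\cap\gamchap\subseteq\gamchap^p$ --- hinges on the claim that $w^p\in\gamchap$ implies $w\in\gamchap$, justified by ``a direct comparison in $\Gamma_+$''. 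But $\gamchap$ is a \emph{multiplicative} subgroup, and there is no reason it is a $\Z_p$-submodule of $\Gamma_+$ before you already know it is uniform; the correspondence between closed subgroups and additive sublattices (or Lie ideals) in \S\ref{section-lie} requires uniformity as an input, not as a conclusion. So the $\sigma$-eigenspace bookkeeping does not pin down $w$, and the argument is circular as written. Replace the whole plan for (ii) by the one-line rank argument.
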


\begin{proof}
(i)  If $d_p \gamchap=1$, the group $\gamchap$ is generated by only one element and then is  procyclic. If $\gamchap$ is open, the quotient $\Gamma/\gamchap$ is finite and then the $p$-adic analytic group  $\Gamma$ is of dimension $1$ which is a contradiction. Thus, $d_p \gamchap>1$ and the action of $\sigma$ on $\Gamma$ is \fpm \ thanks to Proposition \ref{critere-non-size}.

(ii) Here  $d_p \gamchap=d$ which is equivalent to $\gamchap$ being a uniform group (of dimension $d$).
\end{proof}

\begin{rema} We will see in Proposition \ref{calcul-} that when $\Gamma$ is FAb and uniform of dimension $d$, $t_\sigma(\Gamma) \neq (d-1,1)$.
\end{rema}

\begin{rema}[See \cite{LM}]
For every  uniform group $\Gamma$, there exists an open subgroup  $\Gamma_0$ such that for all open subgroup   $\U$ of $\Gamma_0$, one has $d_p \U \geq \dim(\Ell(\Gamma))$, where $\dim(\Ell(\Gamma))$ is the dimension of the Lie algebra associated to $\Gamma$ (see the next section).  
If  moreover $\Gamma$ is  {FAb}, one has also $\dim(\Ell(\Gamma)) < \dim(\Gamma)=d_p \Gamma$.
\end{rema}


\subsection{Uniform groups and Lie algebras}\label{section-lie}

\subsubsection{The correspondence} \label{section-correspondance}

Consider a uniform group $\Gamma $ of dimension $d$. We have seen how to associate to $\Gamma$ a uniform abelian group  $\Gamma_{+} \simeq \Z_p^d$. In fact, this group is naturally equipped with more algebraic structure, as we now explain.

\

For $x, y \in \Gamma$, put $(x,y)_n:=\psi_{2n}([x^{p^n},y^{p^n}])$ and define $$(x,y)=\lim_{n\rightarrow \infty}(x,y)_n\cdot$$

\medskip

\begin{theo}[\cite{DSMN},Theorem 4.30]
The $\Z_p$-module $\Gamma_{+}$ equipped with the bracket $( \cdot, \cdot)$ is a $\Z_p$-Lie algebra of dimension $d$. Denote by $\Ll_\Gamma$ this new Lie algebra.
\end{theo}

\begin{rema} Recall that each $\sigma \in \Aut(\Gamma)$ induces an automorphism of  $\Gamma_{+}$.
By noting that  $\sigma((x,y)_n)=(\sigma(x),\sigma(y))_n$, we see that  $\sigma(x,y)=(\sigma(x),\sigma(y))$, so $\sigma$ becomes an automorphism of  the $\Z_p$-Lie algebra $\Ll_{\Gamma}$.
\end{rema}

Remark that as $\Gamma$ is uniform, thus $[\Gamma,\Gamma] \subset \Gamma^{2p}$ and  $(x,y)_n \in \Gamma^{2p}$; by passing to the limit, one obtains: $(\Ll_{\Gamma},\Ll_{\Gamma})\subset 2p \  \Ll_{\Gamma}$.

\begin{defi}
A $\Z_p$-Lie algebra $\Ll$ is called  powerful if $(\Ll,\Ll) \subset 2p \Ll$. 
\end{defi}

Recall now the following correspondence.

\begin{theo}[\cite{DSMN}, Theorem 9.10] \label{Correspondance}
There exists a bijective  correspondence between the category of uniform groups of dimension $d$ and the  category of powerful $\Z_p$-Lie algebras of dimension $d$.
\end{theo}

Given a uniform group of dimension $d$, we have already seen how to associate to it  a $\Z_p$-Lie algebra of dimension $d$. 
The inverse map is obtained by using  the development  of Campbell-Hausdorff $\Phi$ (see section 9.4 of \cite{DSMN}), which we now sketch. Using the formal series of $\Q_p[[X]]$  $$E(X)=\sum_{n\geq 0} \frac{1}{n !}X^n \ \ {\rm and} \ \ L(X)=\sum_{n\geq 1} \frac{(-1)^{n+1}}{n} X^n,$$
we define $$\Phi(X,Y)=L(E(X)E(Y)-1)\cdot $$

We consider the series $\Phi$  in the Lie algebra $\Q_p((X,Y))$ equipped with the bracket  $[X,Y]=XY-YX$. 
For $U_1,\cdots, U_t \in \Q_p((X,Y))$, let us define by induction  $[U_1,\cdots, U_t]=[[U_1,\cdots, U_{t-1}],U_t]$. If $e$ is the multiset $e=(e_1,\cdots,e_s)$, $e_i\geq 1$ for all $i$,  put
$[X,Y]_e=[X,X(e_1),Y(e_2), \cdots ]$, where $X(e_i)$ (resp. $Y(e_j)$) denotes the constant $e_i$-tuple  $X(e_i)=(X,\cdots, X)$ (resp. the $e_j$-tuple $Y(e_j)=(Y,\cdots, Y)$).  Hence $[X,Y]_e$ is a bracket of length $\langle e \rangle := 1+ e_1+\cdots + e_s$. 
 
In $\Q_p((X,Y))$, let us write $\displaystyle{ \Phi = \sum_n u_n(X,Y)}$, where  $u_n$ denote the  elements of  degree  $n$ which can be expressed as a sum
$$u_n(X,Y)=\sum_{\langle e\rangle =n-1} q_e [X,Y]_e,$$  where $q_e \in \Q$.
One translate these formulas to the   $\Z_p$-Lie algebras $\Ll$ equipped with the bracket  $(\cdot, \cdot)$. Suppose $\Ll$ powerful.
Then for all $x,y \in \Ll$, the specialization of $\Phi$ at the bracket $(x,y)$ converges in $\Ll$ (see Corollary   6.38 of \cite{DSMN}). 
Concretely, in the formulas one has replaced  $[X,Y]$ by $(x ,y)$.

\begin{theo}[\cite{DSMN}, Theorem 9.8 and Theorem 9.10]\label{theo-correspondance} Let  $\Ll$ be a powerful $\Z_p$-Lie algebra of  dimension $d$. Let  $\{x_1,\cdots, x_d\}$ be a $\Z_p$-basis of $\Ll$. 
The law $x*y=\Phi(x,y)$ makes $\Ll$ into a uniform group  $\Gamma_\Ll$ of dimension $d$, topologically generated by $\{x_1,\cdots, x_d\}$. 
Moreover $\Ll_{\Gamma_\Ll} \simeq \Ll$ and $\Gamma_{\Ll_\Gamma} \simeq \Gamma$.
\end{theo}

\begin{rema}\label{matrices}
Let us examine carefully the case where $\Ll$ is a  powerful sub-Lie-algebra  of the Lie algebra $\M_n(\Q_p)$ of $p$-adic $n\times n$ matrices  equipped  with the bracket $(A,B)=AB-BA$. 
Consider the map   "exponential" $\exp$ and "logarithm" $\log$ of matrices well-defined in our context  (see \S 6.3 of \cite{DSMN}):
$\xymatrix{ \Ll \ar@/^1pc/[rr]^{\exp} && \ar@/^1pc/[ll]^{\log}\exp(\Ll)
}$.
Thus for $A,B \in \Ll$, we get $\exp(A) \exp(B)= \exp(\Phi(A,B))$, where $\Phi$ is the Campbell-Hausdorff series (see Proposition 6.27) and then $\exp(\Ll)$  is isomorphic to the uniform group $\Gamma_{\Ll}$ (see Corollary 6.25 of \cite{DSMN}). 
\end{rema}

{Since we are especially interested in uniform groups which are FAb, we give a characterization of such groups, which is probably well-known to  specialists.}

\begin{prop}\label{critere-fab}
A uniform  group $\Gamma$ is FAb if and only if   $$\Ll_\Gamma(\Q_p)=(\Ll_\Gamma(\Q_p),\Ll_\Gamma(\Q_p)),$$ where $\Ll(\Q_p)$ is the $\Q_p$-Lie algebra obtained from $\Ll$ by extending the scalars to $\Q_p$.
\end{prop}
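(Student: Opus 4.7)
The plan is to reduce the FAb condition on $\Gamma$ to the vanishing of the Lie algebra abelianization, by combining the Lie correspondence (Theorem~\ref{Correspondance}) with the fact that FAb is controlled by the behavior of open uniform subgroups, all of which share the same $\Q_p$-Lie algebra.

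The central step I would establish first is the following identification for any uniform pro-$p$ group $\Lambda$:
$$\Lambda^{\mathrm{ab}} \otimes_{\Z_p} \Q_p \;\cong\; \Ll_\Lambda(\Q_p)\,\big/\,(\Ll_\Lambda(\Q_p),\Ll_\Lambda(\Q_p)).$$
Granting this, the finitely generated $\Z_p$-module $\Lambda^{\mathrm{ab}}$ is finite if and only if the right-hand side vanishes, i.e.\ $\Ll_\Lambda(\Q_p)$ is perfect. To justify the identification, I would argue that the closed derived subgroup $\overline{[\Lambda,\Lambda]}$ is a closed (hence $p$-adic analytic) subgroup of $\Lambda$ whose associated $\Q_p$-Lie subalgebra is exactly $(\Ll_\Lambda(\Q_p),\Ll_\Lambda(\Q_p))$, via the functoriality of the correspondence of Theorem~\ref{theo-correspondance} and the explicit Campbell--Hausdorff law recalled in \S\ref{section-correspondance} together with the limit formula $(x,y)=\lim_{n\to\infty}\psi_{2n}([x^{p^n},y^{p^n}])$.

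With the key identification in hand, both directions follow quickly. For $(\Rightarrow)$, if $\Gamma$ is FAb then in particular $\Gamma^{\mathrm{ab}}$ is finite, and applying the identification with $\Lambda=\Gamma$ gives $\Ll_\Gamma(\Q_p)=(\Ll_\Gamma(\Q_p),\Ll_\Gamma(\Q_p))$. For $(\Leftarrow)$, suppose $\Ll_\Gamma(\Q_p)$ is perfect and let $U \leq \Gamma$ be open. I choose an open uniform subgroup $V \subseteq U$; since open subgroups of a $p$-adic analytic group share the same Lie algebra after tensoring with $\Q_p$, we have $\Ll_V(\Q_p) = \Ll_\Gamma(\Q_p)$, which is again perfect, so by the central step $V^{\mathrm{ab}}$ is finite. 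The composition $V \hookrightarrow U \twoheadrightarrow U^{\mathrm{ab}}$ factors through the finite group $V^{\mathrm{ab}}$ and its image has index at most $[U:V]<\infty$ in $U^{\mathrm{ab}}$, so $U^{\mathrm{ab}}$ is an extension of a finite group by a finite one, hence finite.

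The main obstacle I anticipate is the precise identification, inside the central step, of the $\Q_p$-Lie algebra of $\overline{[\Lambda,\Lambda]}$ with the derived Lie subalgebra $(\Ll_\Lambda(\Q_p),\Ll_\Lambda(\Q_p))$. One inclusion is painless: since $\Lambda/\overline{[\Lambda,\Lambda]}$ is abelian, its Lie algebra is abelian, forcing $(\Ll_\Lambda(\Q_p),\Ll_\Lambda(\Q_p))$ to lie inside the Lie algebra of $\overline{[\Lambda,\Lambda]}$. The reverse inclusion is more delicate: every element of $(\Ll_\Lambda(\Q_p),\Ll_\Lambda(\Q_p))$ must be recovered, up to $\Q_p$-scaling by a power of $p$, from iterated group commutators, which is exactly what the bracket limit formula of \S\ref{section-correspondance} provides after clearing denominators. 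Thus the obstacle reduces to careful bookkeeping with the Campbell--Hausdorff series rather than any genuinely new ingredient.
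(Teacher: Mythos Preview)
Your proposal is correct and lands on the same equivalence the paper proves, but the paper organizes the argument so as to avoid precisely the obstacle you flag. Rather than identifying the $\Q_p$-Lie algebra of $\overline{[\Lambda,\Lambda]}$ with the derived subalgebra (which, as you note, requires some Campbell--Hausdorff bookkeeping for the reverse inclusion), the paper works only with \emph{uniform} normal subgroups, where the correspondence of \cite{DSMN} is clean. For the direction ``$\Gamma^{\mathrm{ab}}$ infinite $\Rightarrow$ not perfect'', it picks a normal $H$ with $\Gamma/H\simeq\Z_p$, invokes Proposition~4.31 of \cite{DSMN} to get that $H$ is uniform with $\Ll_{\Gamma/H}\simeq\Ll_\Gamma/\Ll_H$ abelian, and concludes $(\Ll_\Gamma,\Ll_\Gamma)\subset\Ll_H\subsetneq\Ll_\Gamma$. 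For the converse, starting from $(\Ll_\Gamma(\Q_p),\Ll_\Gamma(\Q_p))\subsetneq\Ll_\Gamma(\Q_p)$, it builds an ideal $\Ll_2\supseteq(\Ll_\Gamma,\Ll_\Gamma)$ with $\Ll_\Gamma/\Ll_2\simeq\Z_p^t$ torsion-free (by killing the torsion of $\Ll_\Gamma/(\Ll_\Gamma,\Ll_\Gamma)$), then applies Proposition~7.15 of \cite{DSMN} to obtain a uniform $H$ with $\Gamma/H\simeq\Z_p^t$. Your route via the single identification $\Lambda^{\mathrm{ab}}\otimes\Q_p\cong\Ll_\Lambda(\Q_p)/(\Ll_\Lambda(\Q_p),\Ll_\Lambda(\Q_p))$ is more conceptual and reusable; the paper's route trades that generality for staying entirely inside the uniform category, where the cited propositions apply directly and the delicate inclusion never needs to be checked. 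Your reduction of FAb to ``$\Gamma^{\mathrm{ab}}$ finite'' via an open uniform $V\subseteq U$ is also fine and matches the paper's opening observation that all open subgroups share $\Ll_\Gamma(\Q_p)$.
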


\begin{proof}
For every open subgroup $H$ of the uniform group $\Gamma$,  $\Ll_H(\Q_p)=\Ll_\Gamma(\Q_p)$. Hence,
one has to prove that  $\Gamma^{ab}$ is finite if and only if, $\Ll_\Gamma(\Q_p)=(\Ll_\Gamma(\Q_p),\Ll_\Gamma(\Q_p))$. 
Suppose $\Gamma^{ab}$ infinite. There exists a closed and normal subgroup  $H$ of $\Gamma$ such that $\Gamma/H \simeq \Z_p$. By Proposition 4.31 of \cite{DSMN}, the subgroup $H$ is uniform, the  $\Z_p$-Lie algebra  $\Ll_H$ is an ideal of $\Ll_\Gamma$, and $\Ll_{\Gamma/H} \simeq \Ll_\Gamma/\Ll_H$. As $\Gamma/H $ is abelian, the Lie algebra  $\Ll_{\Gamma/H}$ is commtutative (corollary 7.16 of \cite{DSMN}). In fact,  $\Ll_{\Gamma/H}=\Z_p$. Then $\Ll_H$ contains $[\Ll_\Gamma,\Ll_\Gamma]$; thus $\Ll_\Gamma/(\Ll_\Gamma,\Ll_\Gamma) \twoheadrightarrow \Ll_\Gamma/\Ll_H \simeq \Z_p$ and  therefore  $(\Ll_\Gamma(\Q_p),\Ll_\Gamma(\Q_p)) \subsetneq \Ll_\Gamma(\Q_p)$.

\

In the other direction, suppose that $(\Ll_\Gamma(\Q_p),\Ll_\Gamma(\Q_p)) \subsetneq \Ll_\Gamma(\Q_p)$, or equivalently that the  $\Z_p$-rank of $\Ll_\Gamma/(\Ll_\Gamma,\Ll_\Gamma)$ is not trivial. Put $\Ll_1=\Ll_\Gamma/(\Ll_\Gamma,\Ll_\Gamma)$. As  $\Z_p$-modules, let us write $\Ll_1=\Ll_0 \oplus \Tor (\Ll_1)$. It is then easy to  see that $\Tor(\Ll_1)$ is an ideal of the  Lie algebra $\Ll_1$. Thus consider the quotient $\Ll_0:=\Ll_1/\Tor(\Ll_1)$: it is a non trivial, commutative and torsion-free $\Z_p$-Lie algebra! By the  correspondence of Theorem \ref{Correspondance}, the algebra $\Ll_0$ corresponds to a uniform abelian group $\Gamma_0$ (by Corollary 7.16 of \cite{DSMN}). In fact, as $\Ll_0 \simeq \Z_p^t$, with $t>0$, on has $\Gamma_0\simeq  \Z_p^t$.
The algebra   $\Ll_0$ is also the quotient of $\Ll_\Gamma$ by the ideal  $\Ll_2$ generated by  $(\Ll_\Gamma,\Ll_\Gamma)$ and the lifts of $\Tor(\Ll_1)$.  By Proposition 7.15 of \cite{DSMN},  under the correspondence of  Theorem  \ref{Correspondance}, the algebra $\Ll_2$ corresponds to a uniform closed subgroup $H$ of $\Gamma$; moreover $\Gamma/H$ is uniform. Therefore as for the  previous implication, we get  $\Ll_{\Gamma/H} \simeq \Ll_\Gamma/\Ll_2 \simeq \Ll_0 \simeq \Z_p^t$ and then $\Gamma/H \simeq \Gamma_0 \simeq \Z_p^t$.
\end{proof}

The proof has shown  the following result:
\begin{coro}
A uniform group $\Gamma$ is {FAb} if and only if $\Gamma^{ab}$ is finite.
\end{coro}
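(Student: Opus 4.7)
The forward direction is immediate from the definition: taking $U = \Gamma$ in the definition of FAb gives that $\Gamma^{ab}$ must be finite, so no separate argument is needed.

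For the converse, I would invoke Proposition \ref{critere-fab} (or more precisely, the intermediate equivalence established inside its proof), which shows that the finiteness of $\Gamma^{ab}$ is equivalent to the Lie algebra $\Ll_\Gamma(\Q_p)$ being perfect, i.e.\ $\Ll_\Gamma(\Q_p) = (\Ll_\Gamma(\Q_p),\Ll_\Gamma(\Q_p))$. Combined with the opening remark of that proof, namely $\Ll_H(\Q_p) = \Ll_\Gamma(\Q_p)$ for every open uniform subgroup $H \leq \Gamma$, we see that the perfectness condition is inherited by every uniform open subgroup of $\Gamma$.

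Now let $U$ be an arbitrary open subgroup of $\Gamma$; we need only show $U^{ab}$ is finite. Since $\Gamma$ is $p$-adic analytic, $U$ itself is $p$-adic analytic and so contains an open uniform subgroup $V$. Applying Proposition \ref{critere-fab} to $V$ (whose $\Q_p$-Lie algebra is perfect by the previous paragraph), we conclude that $V$ is FAb, and in particular $V^{ab}$ is finite. Since $V$ has finite index in $U$ and $[V,V] \subseteq [U,U]$, the commutator subgroup $[U,U]$ has finite index in $U$, hence $U^{ab}$ is finite.

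The corollary is therefore just a packaging of Proposition \ref{critere-fab}: since the criterion given there depends only on the $\Q_p$-Lie algebra, which is an invariant of the $p$-adic analytic group (not just of one uniform model), being FAb is detected by the single group $\Gamma^{ab}$. There is no real obstacle; the only subtlety is the routine passage from uniform open subgroups to arbitrary open ones, which is handled by the finite-index commutator argument above.
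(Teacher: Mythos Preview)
Your proposal is correct and follows essentially the same approach as the paper: the corollary is simply read off from the proof of Proposition~\ref{critere-fab}, where the reduction ``$\Ll_H(\Q_p)=\Ll_\Gamma(\Q_p)$ for every open $H$'' already shows that the perfectness criterion (equivalently, finiteness of $\Gamma^{ab}$) governs all open subgroups at once. Your explicit passage from a uniform open $V\subset U$ to arbitrary open $U$ via the finite-index commutator argument just spells out what the paper leaves implicit.
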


In this  subsection, we have seen the relevance of  powerful $\Z_p$-Lie algebras and their automorphisms in our study. If moreover we restrict attention to FAb and uniform groups, one sees the importance of simple algebras. Indeed, it follows from definitions that  every $\Z_p$-Lie algebra  $\Ll$  which is simple or even semisimple (after extending scalars) produces a uniform FAb group by  Proposition \ref{critere-fab}.

\subsubsection{Lie algebras and  fixed points}

We now further explore the Lie algebra  $\Ll$ over  $\Q_p$. Denote by $(\cdot, \cdot)$ the Lie bracket of $\Ll$.
For algebras of dimension   $2$ or $3$, see for example \cite{Jacobson2}, \S I.4.

\begin{defi}
Let $\Ll$ be a Lie algebra and let  $\sigma \in \Aut(\Ll)$. Put  $\Ll_\sigma=\{ x \in \Ll, \sigma(x)=x\}$. 
\end{defi}

Let us introduce the notion of FAb algebra.

\begin{defi}
A Lie algebra $\Ll$ over $\Q_p$ is called {FAb} if $(\Ll,\Ll)=\Ll$. In particular a semisimple Lie algebra is FAb.
\end{defi}

  As for  pro-$p$ groups having a FPF automorphism $\sigma$ of order $\ell \neq p$,  the same phenomenon occurs  for Lie algebras.
  Indeed as a consequence of a result of  Borel and  Serre \cite{Borel-Serre} (cf the remark  of Jacobson \cite{Jacobson1}, page 281), we have the following Proposition.

\begin{prop}\label{lie-nilpotente}
Let $\Ll$ be a \emph{FAb} Lie algebra and let $\sigma \in \Aut(\Ll)$ of order $\ell$. Then $\Ll_\sigma \neq \{0\}$.
\end{prop}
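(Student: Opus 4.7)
The plan is to argue by contradiction, reducing the statement to the Borel--Serre theorem cited in the paper. Assume, toward a contradiction, that $\Ll \neq \{0\}$ and $\Ll_\sigma = \{0\}$. Since $\sigma$ has finite order $\ell$ and we are working over the characteristic-zero field $\Q_p$, the cyclic group $\langle \sigma \rangle$ acts semisimply on the finite-dimensional $\Q_p$-vector space $\Ll$. The hypothesis $\Ll_\sigma = \{0\}$ then says that the trivial character does not appear in the decomposition of $\Ll$ under $\langle \sigma \rangle$, i.e.\ $\sigma$ acts on $\Ll$ without non-trivial fixed points.

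At this point I would invoke the theorem of Borel--Serre \cite{Borel-Serre} (in the form recalled by Jacobson \cite{Jacobson1}, p.~281): a finite-dimensional Lie algebra over a field of characteristic zero admitting a finite-order automorphism with no non-zero fixed point is necessarily nilpotent. Applied to $\sigma$ acting on $\Ll$, this yields that $\Ll$ is a nilpotent Lie algebra.

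Once nilpotency is in hand, the conclusion follows from the very definition of nilpotency. Indeed, for a nilpotent Lie algebra $\Ll$, the descending central series
\[
\Ll \supseteq (\Ll,\Ll) \supseteq (\Ll,(\Ll,\Ll)) \supseteq \cdots
\]
terminates in $\{0\}$; in particular, if $\Ll \neq \{0\}$ one has the strict inclusion $(\Ll,\Ll) \subsetneq \Ll$, for otherwise the descending central series would stabilize at $\Ll$ rather than reach $\{0\}$. But the FAb hypothesis is exactly $(\Ll,\Ll) = \Ll$, which contradicts this strict inclusion. Hence the assumption $\Ll_\sigma = \{0\}$ was untenable, and $\Ll_\sigma \neq \{0\}$ as claimed.

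The only non-routine ingredient is the Borel--Serre theorem itself, which is the true engine of the proof; everything else is a short logical rearrangement. There is no serious obstacle, provided one is comfortable citing that result in the stated form.
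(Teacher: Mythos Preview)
Your proposal is correct and follows exactly the same route as the paper: assume $\Ll_\sigma=\{0\}$, invoke Borel--Serre to conclude $\Ll$ is nilpotent, and observe that this contradicts the FAb condition $(\Ll,\Ll)=\Ll$. The paper's proof is just a one-line version of what you wrote; your additional remarks on semisimplicity and the descending central series are fine but not needed.
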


\begin{proof}
Indeed, by Proposition 4 of \cite{Borel-Serre}, if $\Ll_\sigma=\{0\}$ then  $\Ll$ is nilpotent and the conclusion is obvious.
\end{proof}

An automorphism of order $\ell$ of a FAb $\Q_p$-Lie algebra must have a non-trivial fixed point. One finds again  Proposition \ref{FPF} in the context of uniform groups.
  If  $\sigma \in \Aut(\Ll)$ is of order $2$, as for pro-$p$ groups, one define the $\sigma$-type of $\Ll$  as  $t_\sigma(\Ll)=(a,b)$, where $a=\dim \ker(\sigma - \iota)$ and  $b=\dim \ker(\sigma + \iota)$, $\iota$ being the trivial automorphism. We have $a=\dim \Ll_\sigma$ and $b=d-a$ where $d=\dim \Ll$.

\begin{prop} \label{calcul-}
Let $\Ll$ be a FAb $\Q_p$-Lie algebra of   dimension $d$ and let $\sigma \in \Aut(\Ll)$ be of order $2$. Let  $t_\sigma(\Ll)=(a,b)$ be the $\sigma$-type of $\Ll$. Then  $a\neq 0$ and $b > 1$.
\end{prop}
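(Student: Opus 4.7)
The plan is to handle the two inequalities separately. The claim $a \neq 0$ is immediate from Proposition \ref{lie-nilpotente}: since $\Ll$ is FAb and $\sigma$ has finite order, the fixed space $\Ll_\sigma$ is non-zero, so $a \geq 1$. The real content of the proposition is the strict inequality $b > 1$, and this is where the assumption $\ell = 2$ (together with a small dimension count) will bite.

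For $b > 1$, I proceed by contradiction. Since $\sigma$ has order exactly $2$ it is non-trivial, so $b \geq 1$, and the only case to exclude is $b = 1$. Assume this and write $\Ll = \Ll_+ \oplus \Ll_-$ for the decomposition into the $\pm 1$-eigenspaces of $\sigma$, where $\Ll_- = \Q_p\, y$ is one-dimensional. Because $\sigma$ is an automorphism, the bracket respects the $\Z/2$-grading: $(\Ll_\varepsilon, \Ll_{\varepsilon'}) \subset \Ll_{\varepsilon \varepsilon'}$. In particular $(\Ll_-, \Ll_-) \subset \Ll_+$ vanishes for free, since $\Ll_-$ is one-dimensional and $(y, y) = 0$. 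Hence
\[
(\Ll, \Ll) \;=\; (\Ll_+, \Ll_+) \,\oplus\, (\Ll_+, \Ll_-) \;\subset\; \Ll_+ \oplus \Ll_-.
\]
The FAb hypothesis $(\Ll, \Ll) = \Ll$ now forces \emph{both} $(\Ll_+, \Ll_+) = \Ll_+$ (so $\Ll_+$ is itself FAb) and $(\Ll_+, \Ll_-) = \Ll_-$.

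These two requirements are incompatible. Indeed, the adjoint action of $\Ll_+$ on the one-dimensional space $\Ll_-$ defines a Lie-algebra homomorphism $\rho : \Ll_+ \to \mathrm{End}_{\Q_p}(\Ll_-) \simeq \Q_p$ into the abelian Lie algebra $\Q_p$; any such $\rho$ must vanish on the derived subalgebra, hence factors through $\Ll_+/(\Ll_+, \Ll_+)$. Since we just established $\Ll_+ = (\Ll_+, \Ll_+)$, we conclude $\rho \equiv 0$, whence $(\Ll_+, \Ll_-) = 0$, contradicting the requirement $(\Ll_+, \Ll_-) = \Ll_-$. I expect no serious obstacle in writing this out; the only conceptual point is recognising that a one-dimensional anti-invariant piece $\Ll_-$ reduces the $\Ll_+$-action on it to an abelian character, which must then die on the perfect algebra $\Ll_+$.
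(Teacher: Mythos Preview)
Your proof is correct and follows essentially the same approach as the paper: decompose $\Ll$ into the $\pm 1$-eigenspaces of $\sigma$, use FAb to force both $(\Ll_+,\Ll_+)=\Ll_+$ and $(\Ll_+,\Ll_-)=\Ll_-$, and then derive a contradiction via the Jacobi identity. The paper carries this out in coordinates, writing $(e_i,\varepsilon)=\lambda_i\varepsilon$ and computing explicitly that Jacobi gives $\sum_k a_k(i,j)\lambda_k=0$, whence full rank of $(a_k(i,j))$ forces all $\lambda_k=0$; your version packages the same computation invariantly as ``the adjoint representation $\rho:\Ll_+\to\mathfrak{gl}(\Ll_-)\simeq\Q_p$ is a Lie-algebra map into an abelian target, hence kills $(\Ll_+,\Ll_+)=\Ll_+$.'' The content is identical, but your formulation is cleaner and makes the role of the Jacobi identity (hidden in the fact that $\mathrm{ad}$ is a Lie homomorphism) more transparent.
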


\begin{proof} By Proposition \ref{lie-nilpotente}, the type $(0,d)$ is excluded.
Suppose  $\Ll$ of type $(d-1,1)$.
Take a $\Q_p$-basis $\{e_1,e_2,\cdots, e_{d-1},\varepsilon\}$ of $\Ll$   respecting the action $\sigma$, \emph{i.e.} for $i=1,\cdots, d-1$, 
$\sigma(e_i)=e_i$ and $\sigma(\varepsilon)=-\varepsilon$.
One then remarks that  $\sigma$ acts by $+1$  on $(e_i,e_j)$ and by $-1$ on $(e_i,\varepsilon)$: therefore $(e_i,e_j) \in \langle e_1 , \cdots, e_{d-1} \rangle$ and $(e_i,\varepsilon) \in \langle \varepsilon \rangle$.
Hence, for $i\neq j$, $\displaystyle{ (e_i,e_j)=\sum_{k=1}^{d-1} a_k(i,j) e_k}$, with $a_k \in \Q_p$, and also for $i=1,\cdots, d-1$, $(e_i,\varepsilon)=\lambda_i \varepsilon$.
As the Lie algebra is {FAb}, the matrix  $(a_k(i,j))_{((i,j),k)}$ of size $\displaystyle{\frac{(d-1)(d-2)}{2} \times (d-1)} $ must be of maximal rank, \emph{i.e.} $d-1$.
Also the vector  $(\lambda_1,\cdots, \lambda_{d-1})$ are non zero. 

Now the elements $(e_i)_i$ and $\varepsilon$ should verify the Jacobi identity; in particular one should have for $i\neq j$~: 
$$(e_i,(e_j,\varepsilon))+(e_j,(\varepsilon,e_i))+(\varepsilon,(e_i,e_j))=0 \cdot$$
Thus one gets
$$
\begin{array}{rcl} (e_i,(e_j,\varepsilon))+(e_j,(\varepsilon,e_i))+(\varepsilon,(e_i,e_j)) & = &\displaystyle{\lambda_j(e_i,\varepsilon) - \lambda_i(e_j,\varepsilon)+ \sum_{k=1}^{d-1}a_k(i,j) (\varepsilon, e_k) } \\
&=& \displaystyle{\lambda_j\lambda_i \varepsilon - \lambda_i\lambda_j \varepsilon - \sum_{k=1}^{d-1}a_k(i,j) \lambda_k \varepsilon}
\end{array}$$
and then $$\sum_{k=1}^{d-1}a_k(i,j) \lambda_k=0 \cdot$$
If the matrix $(a_k(i,j))_{((i,j),k)}$ is of maximal rank, then  $\lambda_k=0$ for all   $k$ and $\Ll$ is not {FAb}.
 \end{proof}

Applying the correspondence of uniform groups/Lie algebras, this proposition allows us to obtain  the following corollary:

\begin{coro}\label{coro-calcul-}
Let $\Gamma$ be a FAb uniform group of dimension $d$ and let $\sigma \in \Aut(\Gamma)$ be of order $2$.
Then  $t_\sigma(\Gamma)=(d-k,k)$ with $k \geq 2$. Therefore for  a {FAb}  uniform group of dimension $3$ the 
type  of every automorphism $\sigma$ of order $2$ is constant and equal to  $t_\sigma(\Gamma)=(1,2)$. \end{coro}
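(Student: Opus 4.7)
The plan is to transfer the problem from $\Gamma$ to its associated $\Q_p$-Lie algebra and invoke Proposition \ref{calcul-}. Let $\Ll = \Ll_\Gamma(\Q_p)$ be the $\Q_p$-Lie algebra obtained by extending scalars in the powerful $\Z_p$-Lie algebra $\Ll_\Gamma$ attached to $\Gamma$ via Theorem \ref{theo-correspondance}. First I would observe that the automorphism $\sigma \in \Aut(\Gamma)$ gives rise to an automorphism of $\Gamma_{+}$, hence of $\Ll_\Gamma$ (see the remark in \S\ref{section-correspondance}), and so of $\Ll$, still of order $2$. Since $\Gamma$ is FAb, Proposition \ref{critere-fab} tells us that $\Ll = (\Ll,\Ll)$, so $\Ll$ is a FAb $\Q_p$-Lie algebra in the sense of the preceding subsection.

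Next I would match up the types. By Theorem \ref{unicite} and the discussion of \S\ref{section-additive}, the map $\varphi$ is an isomorphism of $\fq_p[\langle\sigma\rangle]$-modules between $\Gamma^{\pel}$ and $\Gamma_{+}/p$. Since $\sigma$ has order $\ell = 2$ coprime to $p$ (recall $p$ is odd because $\ell \mid p-1$ with $\ell = 2$ forces $p$ odd), its action on the free $\Z_p$-module $\Gamma_{+}$ is semisimple, so the dimensions of the $\pm 1$-eigenspaces computed over $\fq_p$ on $\Gamma_{+}/p$ coincide with those computed over $\Q_p$ on $\Gamma_{+}\otimes\Q_p = \Ll$. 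In particular the type $t_\sigma(\Gamma)$ read off from $\Gamma^{\pel}$ equals the type $t_\sigma(\Ll)$ read off from the Lie algebra.

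With these identifications in place, Proposition \ref{calcul-} applied to $\Ll$ yields $t_\sigma(\Ll) = (a,b)$ with $a \neq 0$ and $b > 1$. Writing $k = b$, this gives $t_\sigma(\Gamma) = (d-k,k)$ with $k \geq 2$, which is the first assertion. For the second assertion, specialize to $d = 3$: the constraints $a \neq 0$ and $b \geq 2$ combined with $a+b = 3$ force $a = 1$ and $b = 2$, so $t_\sigma(\Gamma) = (1,2)$ for every order-$2$ automorphism $\sigma$, showing the type is constant.

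The only delicate point is the semisimplicity step that lets us equate the mod-$p$ type on $\Gamma^{\pel}$ with the rational type on $\Ll$; this is essentially automatic from $\ell$ being coprime to $p$, but one should state it cleanly because Proposition \ref{calcul-} is formulated over $\Q_p$ whereas the definition of $t_\sigma(\Gamma)$ is formulated over $\fq_p$.
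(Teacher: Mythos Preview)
Your proposal is correct and follows exactly the approach the paper intends: the paper's proof consists of the single sentence ``Applying the correspondence of uniform groups/Lie algebras, this proposition allows us to obtain the following corollary,'' and you have simply unpacked that sentence in detail, including the passage through Proposition~\ref{critere-fab} and the semisimplicity argument matching the $\fq_p$-type on $\Gamma^{\pel}$ with the $\Q_p$-type on $\Ll_\Gamma(\Q_p)$. Your remark that this last identification deserves explicit mention is well taken, since the paper glosses over it.
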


 On other hand, look at Lie algebras  $\Ll$ having few  fixed points. 
Consider, say, a Lie algebra $\Ll$ of dimension $4$ such that $\Ll_\sigma$ is of dimension $1$.
Let  $\{e_1,e_2,e_3, \varepsilon\}$ be a $\Q_p$-basis  of $\Ll$ respecting the action if $\sigma$, \emph{i.e.} here
$\sigma(e_i)=-e_i$ and $\sigma(\varepsilon)=\varepsilon$. 
Then  $(e_i,e_j) \in \langle \varepsilon \rangle$ and $(e_i,\varepsilon) \in \langle e_1,\cdots, e_3 \rangle$.
A  linear algebra computation similar to those of Proposition \ref{calcul-} shows  that $\Ll$ can not be FAb: necessarily, $\Ll/( \Ll,\Ll) \twoheadrightarrow  \Q_p$. 
The same holds for the  dimension $5$.
In fact, it is a general and well-known phenomenon  for semisimple Lie algebras $\Ll$. Indeed dimension of $\Ll_\sigma$ grows with the dimension of $\Ll$ (see Theorem 10 and Theorem 8 of \cite{Jacobson1}).

\section{Examples}\label{section-examples}

\subsection{First examples}

We first give   examples of non FAb uniform groups having an automorphism $\sigma$ of order  $2$ with fixed points.   We assume that $p>2$.

\subsubsection{Direct product} Consider  $\Z_p \times \Z_p=\langle x\rangle \times \langle y \rangle $ with $\sigma(x)=x$ and $\sigma(y)=y^{-1}$; $t_\sigma(\Gamma)=(1,1)$.
Then $\displaystyle{\gamchap=\gamchapzero=\langle x\rangle }$ and $G = \Gamma /\gamchap =\langle y \rangle \simeq \Z_p$.
 Here $\Z_p[[G]] \simeq \Z_p[[T]]$ and the action of $y$ on $x$ is trivial. 
 
 \subsubsection{Semidirect product} For $a \in \Z_p^\times$, 
 Consider $$\Gamma=\Gamma(a)=\langle x,y \ | \ y^x=y^a \rangle$$
 which can be realized concretely as $\Gamma= \langle \xi, \eta \rangle \subset \Gl_2(\Z_p)$ where
 $$
 \xi=\left(\begin{array}{cc}a^{-1} & 0 \\ 0 &1 \end{array} \right), \ \eta =\left(\begin{array}{cc}1 & p \\ 0 &1 \end{array} \right).
 $$
 Without loss of generality, we may take $a=1+p^k$.
 Let $\sigma \in \Aut(\Gamma)$  of order $2$ such that $\sigma(x)=x$ and $\sigma(y)=y^{-1}$. 
 Here, $t_\sigma(\Gamma)=(1,1)$. 
 One then has $\gamchap=\langle x, y^{a-1}\rangle$ and  $G \simeq \Z/p^k\Z$, where $k$ is the  $p$-adic valuation of $a-1$: the subgroup $\gamchap$ is open. One has $[x,y]=y^{p^k}$, therefore $[x,y^n]=y^{np^k}$. Put $x_i=\sigma^i(x)$ and let us use the additive notation. For $2 \leq n \leq p^{k}-1$, one has the relation
 $x_n=(1-n)x_0+nx_1$.  Thus $\displaystyle{(\gamchap)^{\pel}=\fq_p[G] \cdot x=\langle x_0, x_1 \rangle}$ which is
 of  $p$-rank $2$.
 The action of $\sigma$ on  $\Gamma$ is \fpm \ (see also Proposition \ref{size}).
  
  \medskip
 If we consider the decomposition of $\displaystyle{(\gamchap)^{\pel}}$ as the sum of   indecomposable modules (see \cite{CR}, lemma 64.2), one necessarily obtains $(\gamchap)^{\pel}\simeq_G I^{p^k-2}$, where $I:=\ker\big( \fq_p[G] \rightarrow \fq_p \big)$ is the augmentation ideal of $\fq_p[G]$.

 \subsubsection{The Heisenberg group}
 Let $\Ll$ be the $\Z_p$-Lie algebra of dimension $3$ generated by   the matrices $\displaystyle{x=\left(\begin{array}{cc}p & 0 \\ 0 &0 \end{array} \right), \ y =\left(\begin{array}{cc}0 & 0 \\ 0 &p \end{array} \right), \   z=\left(\begin{array}{cc}0 & p \\ 0 &0 \end{array} \right) }$, with bracket $[A,B]=AB-BA$.
 Hence $[x,y]=0$, $[x,z]=pz$ and $[y,z]=-pz$. Moreover $\Ll$ is  powerful   but not {FAb} because $[\Ll(\Q_p),\Ll(\Q_p)]=\Q_p z$.
 Denote by  $\Gamma$ the  uniform group  generated by the exponentials of $x,y$ and $z$:
 $$X=\exp(x)=\left(\begin{array}{cc}e^p & 0\\ 0 &1 \end{array} \right), \ Y=\exp(y)=\left(\begin{array}{cc}1 & 0 \\ 0 &e^p \end{array} \right), \ 
Z=\exp(z)=\left(\begin{array}{cc}1 & p \\ 0 &1 \end{array} \right),$$
under the correspondence of Theorem \ref{theo-correspondance} (see remark \ref{matrices}). The $\Z_p$-rank of $\Gamma^{ab}$ is equal to  $2$ and $\Gamma^{ab} \simeq \Z_p^2 \times \Z/p\Z$.
Let $\sigma=\sigma_A \in \Aut(\Gamma)$ be the automorphism of order $2$ corresponding to conjugation by the matrix $A=\left(\begin{array}{cc}1 &0 \\
0& -1
\end{array}\right)$: $\sigma(M)=A^{-1}MA$. Then $\displaystyle{\gamchapzero= \langle X,Y \rangle \simeq \Z_p^2}$ and $G:=\Gamma/\gamchap=\langle Z \gamchap \rangle \simeq \Z/p\Z$.
As here  $G$ is finite and as $\displaystyle{\gamchapzero}$ is of analytic dimension  $2$, necessarily $\displaystyle{\gamchapzero \subsetneq   \gamchap}$ and then $G$ does not act trivially on $\displaystyle{(\gamchap)^{\pel}}$.

\subsection{The group $\Sl_2^1(\Z_p)$} \label{sl2} As before, we assume $p>2$.
Let us start with the $\Z_p$-Lie algebra $\sl_2$ of dimension $3$ generated by the matrices $$x=\left(\begin{array}{cc}0 & p \\ 0 &0 \end{array} \right), \ y =\left(\begin{array}{cc}0 & 0 \\ p &0 \end{array} \right), \   z=\left(\begin{array}{cc}p & 0 \\ 0 &-p \end{array} \right) \cdot$$ The algebra $\sl_2$ is the subalgebra of the trace zero matrices for which the reduction    modulo $p$ is trivial.
One has the relations  $[x,y]=p z$, $[x,z]=-2px$ and $[y,z]=2py$.  As $[\sl_2,\sl_2]\subset p\cdot \sl_2$, the algebra $\sl_2$ is FAb and  powerful. 
Put
$$X=\exp(x)=\left(\begin{array}{cc}1 & p \\ 0 &1 \end{array} \right), \ Y=\exp(y)=\left(\begin{array}{cc}1 & 0 \\ p &1 \end{array} \right), \ 
Z=\exp(z)=\left(\begin{array}{cc}e^p & 0 \\ 0 &e^{-p} \end{array} \right) \cdot $$
Let $\Sl_2^1(\Z_p)$  be the subgroup of $\Sl_2(\Z_p)$ generated by the  $X,Y, Z$; it is the kernel of the reduction morphism $\Sl_2(\Z_p)\rightarrow \Sl_2(\fq_p)$. The group $\Sl_2^1(\Z_p)$ is FAb, uniform and of dimension  $3$.

\begin{prop}\label{prop-sl2} For every involution $\sigma$ of the uniform pro-$p$ group $\Sl_2^1(\Z_p)$, the action of $\sigma$ is \fpm.
\end{prop}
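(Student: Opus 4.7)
The plan is to combine two earlier results that together yield the statement almost immediately. First I would invoke Corollary \ref{coro-calcul-}: since $\Sl_2^1(\Z_p)$ is a FAb uniform pro-$p$ group of dimension $d=3$, any involution $\sigma \in \Aut(\Sl_2^1(\Z_p))$ has constant type $t_\sigma(\Gamma)=(1,2)$. In particular the integer $r=\dim_{\fq_p} (\Gamma^{\pel})_\sigma$ equals $1$, so $\Gamma_\sigma^\circ$ is procyclic, generated by the single fixed basis element provided by Proposition \ref{pointfixe-uniform}.

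Next I would observe that $\gamchap$ is open in $\Gamma$. Indeed, by Proposition \ref{fini1} applied to the FAb group $\Gamma=\Sl_2^1(\Z_p)$, the quotient $G=\Gamma/\gamchap$ is a finite $p$-group, so $\gamchap$ has finite index in $\Gamma$ and is therefore open.

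Finally I would apply Proposition \ref{size}(i): having shown $t_\sigma(\Gamma)=(1,d-1)$ and that $\gamchap$ is open, this proposition yields directly that the action of $\sigma$ on $\Gamma$ is \fpm. Contained in that argument is the key geometric reason: if the action were not \FPM, Proposition \ref{critere-non-size} would force $\gamchap=\gamchapzero$ to be procyclic, whereas an open subgroup of a $3$-dimensional $p$-adic Lie group cannot be $1$-dimensional.

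There is essentially no obstacle here beyond correctly citing the preceding results; the work has already been done in Corollary \ref{coro-calcul-} (which rules out the types $(0,3)$ and $(2,1)$ via the Jacobi identity computation on $\sl_2$) and in Proposition \ref{size}(i). The proposition for $\Sl_2^1(\Z_p)$ is the cleanest instance where the dimension constraint $d=3$ forces the only surviving involution type to be precisely the one handled by Proposition \ref{size}(i).
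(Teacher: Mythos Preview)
Your proof is correct and follows essentially the same approach as the paper: invoke Corollary \ref{coro-calcul-} to get $t_\sigma(\Gamma)=(1,2)$, then apply Proposition \ref{size}(i). The paper's proof is simply more terse, absorbing your openness observation into the parenthetical remark already present in Proposition \ref{size}(i) (``which is the case if $\Gamma$ is FAb''), but your explicit appeal to Proposition \ref{fini1} for this point is perfectly fine.
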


\begin{proof}
 The uniform pro-$p$ group $\Sl_2^1(\Z_p)$ is FAb and of dimension     $3$: by Corollary    \ref{coro-calcul-}, every  automorphism $\sigma$ of order  $2$  is of type $(1,2)$. One concludes with  Proposition \ref{size}.\end{proof}

By using the correspondence of  Section \ref{section-correspondance}, one can say more.
Consider the homomorphisms  "exponential" and "logarithm" of matrices:
$\xymatrix{ \sl_2 \ar@/^1pc/[rr]^{\exp} && \ar@/^1pc/[ll]^{\log}\Sl_2^1(\Z_p)
}$.
 
Given $A\in \Gl_2(\Z_p)$ and  $B \in \sl_2$,  one has $\exp(\sigma_A(B))=\sigma_A(\exp B)$, where $\sigma=\sigma_A$ is the  conjugation by $A$,  providing the passage from $\Aut(\Sl_2^1(\Z_p))$ to $\Aut(\sl_2)$, and \emph{vice  versa}.

Let $\tau $ be an involution of $\Sl_2^1(\Z_p)$. The automorphism $\tau$ induces  an involution  on $\sl_2$ and then on $\sl_2(\Q_p)$. The key point comes from the fact that every  automorphism of  $\sl_2(\Q_p)$ is the automorphism  $\sigma_A$ of conjugation by  a certain matrix $A \in \Gl_2(\Q_p)$ (see \cite{Seligman}, Theorem 1); thus $\tau(T)=\sigma_A(T)=A^{-1}TA$. Now,  one can assume that the coefficients of  $A$ are in $\Z_p$. Then, as $\sigma$ is of order $2$  and acts on $\sl_2$,  the minimal polynomial of $\A$ is $X^2-1$ or $X^2-\varepsilon$, for  $\varepsilon \in \Z_p^\times \backslash (\Z_p^\times)^2$. 
And, at the end, one can assume that  $A\in \Gl_2(\Z_p)$.
Hence the matrix $A$ is similar in $\Gl_2(\Z_p) $ to  $D=\left(\begin{array}{cc}1 & 0 \\ 0 &-1 \end{array} \right)$
or to $D_\varepsilon= \left(\begin{array}{cc}0 & \varepsilon \\ 1 &0 \end{array} \right)$ (see \cite{Pomfret}).

Let $\sigma=\sigma_D$ and $\sigma_\varepsilon=\sigma_{D_\varepsilon}$ be the two involutions of  $\Sl_2^1(\Z_p)$ corresponding to the conjugation by $D$ or by $D_\epsilon$ respectively. They are  defined by  \begin{itemize}
\item  $\sigma(X)=X^{-1}$, $\sigma(Y)=Y^{-1}$ and $\sigma(Z)=Z$;
\item $\sigma_\varepsilon(X)=Y^{1/\varepsilon}$, $\sigma_\varepsilon(Y)=X^\varepsilon$ and $\sigma_\varepsilon(Z)=Z^{-1}$.
\end{itemize}

Hence there exists  $M \in \Gl_2(\Z_p)$ such that $MAM^{-1}=D$ or $D_\varepsilon$. Thus $\tau= \sigma_A= \sigma_M^{-1}\sigma \sigma_M$ or $\tau=\sigma_M^{-1}\sigma_\varepsilon \sigma_M$. As the matrices $M$ and $A$ are in $\Gl_2(\Z_p)$, $\sigma_{M}$ and $\sigma_A$ are in $ \Aut(\Sl_2^1(\Z_p))$:  the correspondence gives that the identity $\tau= \sigma_M^{-1}\sigma \sigma_M$ can be seen to be in  $\Aut(\Sl_2^1(\Z_p))$ showing that     $\tau$ is conjuguate to $\sigma$ or to $\sigma_\varepsilon$ in $\Aut(\Sl_2^1(\Z_p))$.  
Lastly,  let us remark that for $\sigma$, the situation is easy to describe: one has $\gamchapzero=\langle Z\rangle$. Let  $\gamchap=\langle Z \rangle^{\rm Norm}$ be the normal subgroup of $\Sl_2^1(\Z_p)$ generated by the conjuguates of $Z$ and put $G:=\Gamma/  \gamchap$. As $XZX^{-1}Z^{-1}=\left(\begin{array}{cc} 1 & p(1-e^{2p}) \\ 0 &1 \end{array} \right)$ becomes trivial in $G$,  one has that $X^p$ is trivial in $G$. Then thanks to  Proposition \ref{coro-generateurs}, one has: $G\simeq \Z/p\Z \times \Z/p\Z$.

\subsection{The group $\Sl_n^1(\Z_p)$}\label{sln}

Let $\sl_n(\Q_p)$ be the $\Q_p$-Lie algebra constitued by  the square matrices  $n\times n$ with  coefficients in  $\Q_p$ and of zero trace. It is a simple algebra of dimension $n^2-1$. 
Recall a natural basis of it:
\begin{enumerate} 
\item[(a)] for $i \neq j$, $E_{i,j}=(e_{k,l})_{k,l}$ for which all the coefficient are zero excepted  $e_{i,j}$ that takes value $p$;
\item[(b)] for $t>1$, $D_i=(d_{k,l})_{k,l}$ which is the diagonal matrix $D_i=(p,0,\cdots,0,-p,0,\cdots, 0)$, where $d_{i,i}=-p$. 
\end{enumerate}

Let $\sl_n$ be the $\Z_p$-Lie algebra generated by the $ E_{i,j}$ and the $D_i$. 
The algebra  $\sl_n$ is  powerful and uniform.

Put $X_{i,j}=\exp E_{i,j}$ and $Y_i=\exp D_i$. Denote by $\Sl_n^1(\Z_p)$ the subgroup of $\Sl_n(\Z_p)$ generated by the matrices $X_{i,j}$ and $Y_i$. The group $\Sl_n^1(\Z_p)$ is FAb, uniform  and of  dimension $n^2-1$. It is also the kernel of the reduction map  of  $\Sl_n(\Z_p)$ modulo $p$.

\medskip

By Seligman \cite{Seligman}, one knows that the automorphisms of the algebra $\sl_n(\Q_p)$ are {generated by those of two types, namely those} of the form   $\sigma_A(X)=X^{-1}AX$ with $A \in \Gl_n(\Q_p)$, or of the form $X\mapsto \sigma_A(-X')$, where $X'$ corresponds to the transposition of $X$.

\medskip

We now examine the inner automorphisms $\sigma_A$.
Put $\Gamma =\Sl_n^1(\Z_p)$ and take an automorphism $\sigma$ of order  $2$. The automorphism $\sigma$ induces an automorphism of order  $2$ on 
$\sl_n(\Q_p)$ that we assume of the form $\sigma_A$. As for  $\Sl_2^1(\Z_p)$, one reduces to the case   where $A$ has  $X^2-1$ or $X^2-\varepsilon$ as minimal polynomial, for some $\varepsilon \in \Z_p^\times \backslash (\Z_p^\times)^2$. In particular, when $n$ is odd, this polynomial is $X^2-1$. It is also the case if we have: $$\Gamma \rtimes \langle \sigma \rangle \simeq \Gamma \rtimes \langle A \rangle \hookrightarrow  \Gl_n(\Z_p),$$ where $A$ is a matrix of order $2$ that acts by conjugation. 
Let us assume that $A$ is of order~$2$. {As with the case of $\Sl_2^1(\Z_p)$, we can simplify to the case where $A$ is diagonal with $\pm 1$ eigenvalues.
 Denote by  $k=\dim \ker(A-I)$, \emph{i.e.} the number of $+1$s on the diagonal.}

\begin{lemm} With the above assumptions,  
the vector subspace $\big(\sl_n\big)_{\sigma_A}$ of the fixed points of the algebra $\sl_n$ under conjugation by $A$ is generated by the diagonal matrices and by the   matrices  $E_{i,j}$ for  $\{i,j\} \subset \{1,\cdots, k\}$ or for $\{i,j\} \subset \{k+1,\cdots, n\}$.
The matrices $E_{i,j}$ and $E_{j,i}$, with $i\leq k$ and $j>k$, form a basis of the subspace of the eigenvalue  $-1$.
\end{lemm}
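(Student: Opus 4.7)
The plan is to compute $\sigma_A$ explicitly on each element of the natural basis of $\sl_n$ given in (a) and (b) above. Since $A$ is diagonal, we may write $A=\mathrm{diag}(a_1,\ldots,a_n)$ with $a_i=+1$ for $1\leq i\leq k$ and $a_i=-1$ for $k<i\leq n$. Because $A$ is its own inverse, conjugation takes the form $\sigma_A(X)=AXA$, and a direct entry-wise calculation gives, for the elementary matrices,
\[
\sigma_A(E_{i,j})\;=\;a_i a_j\, E_{i,j},
\]
while for any diagonal matrix $D$ (in particular for each $D_i$) we have $\sigma_A(D)=D$ since diagonal matrices commute with $A$.

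The first part of the statement is then immediate: the diagonal basis elements $D_i$ are all fixed, and $\sigma_A(E_{i,j})=E_{i,j}$ exactly when $a_i a_j=+1$, that is, when $\{i,j\}\subset\{1,\ldots,k\}$ or $\{i,j\}\subset\{k+1,\ldots,n\}$. Conversely, $\sigma_A(E_{i,j})=-E_{i,j}$ exactly when $a_i a_j=-1$, i.e.\ when one of $i,j$ lies in $\{1,\ldots,k\}$ and the other in $\{k+1,\ldots,n\}$. Since the $D_i$ together with the $E_{i,j}$ form a $\Z_p$-basis of $\sl_n$, and since each basis vector is an eigenvector for $\sigma_A$ with eigenvalue $\pm 1$, decomposing into the two eigenspaces yields at once both assertions of the lemma.

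The only possible subtlety is in ruling out other fixed vectors: this is handled by the observation that $\sigma_A$ acts semisimply on the basis with eigenvalues $\pm 1$, so the $\pm 1$-eigenspaces are spanned precisely by the basis elements falling into each class, with no room for mixing. There is no real obstacle here; the statement is essentially a book-keeping consequence of the formula $\sigma_A(E_{i,j})=a_i a_j E_{i,j}$.
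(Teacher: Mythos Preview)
Your proof is correct and is precisely the ``simple computation'' the paper alludes to: diagonalizing $\sigma_A$ on the natural basis via the formula $\sigma_A(E_{i,j})=a_i a_j E_{i,j}$ and noting that diagonal matrices commute with $A$. There is nothing to add; your argument is exactly what the authors had in mind.
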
 

\begin{proof}
It is a simple computation.
\end{proof}

Denote by $H$ the subgroup of  $\Gamma$ generated by the matrices $X_{i,j}$ for $\{i,j\} \subset \{1,\cdots, k\}$ and for $\{i,j\} \subset \{k+1,\cdots, n\}$.

\begin{prop} Under the above conditions,
one has 
\begin{enumerate}
\item[(i)] $\displaystyle{\gamchapzero=\langle X_{i,j}, \{i,j\} \subset \{1,\cdots, k\}, \{i,j\}\subset \{k+1,\cdots, n\}, \   D_l, i\neq j, l>1 \rangle \cdot} $
\item[(ii)] $H \lhd {\gamchapzero}$ ;
\item[(iii)] $H \subset  \left(\begin{array}{c|c}A_{k,k}   &  0 \\  \hline
 0  & B_{n-k,n-k} \end{array} \right)$
\end{enumerate}
\end{prop}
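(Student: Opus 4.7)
The plan is to deduce the three assertions from results already established, principally Proposition \ref{pointfixe-uniform} combined with the preceding lemma describing a basis of $(\sl_n)_{\sigma_A}$, all transported through the exponential correspondence from $\sl_n$ to $\Sl_n^1(\Z_p)$ (Theorem \ref{theo-correspondance} and Remark \ref{matrices}).

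\medskip

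For (i), the preceding lemma supplies a $\Z_p$-basis of $\sl_n$ which respects the action of $\sigma_A$: the $(+1)$-eigenvectors are precisely the $E_{i,j}$ with $\{i,j\}\subset\{1,\ldots,k\}$ or $\{i,j\}\subset\{k+1,\ldots,n\}$ together with the diagonal matrices $D_l$ for $l>1$, while the remaining basis elements (the $E_{i,j}$ with $i\le k<j$ or $j\le k<i$) lie in the $(-1)$-eigenspace. Applying Proposition \ref{pointfixe-uniform} to $\Gamma=\Sl_n^1(\Z_p)$ with this choice of ordered basis, $\gamchapzero$ is the closed subgroup generated by the exponentials of the fixed basis vectors, namely the $X_{i,j}$ for $\{i,j\}$ in a single block and the $Y_l=\exp(D_l)$ for $l>1$, which is the content of (i).

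\medskip

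For (iii), the key observation is that every generator of $H$ is block-diagonal with respect to the partition $\{1,\ldots,k\}\sqcup\{k+1,\ldots,n\}$: indeed each $E_{i,j}$ with $\{i,j\}$ contained in one block is supported in that block, and because $E_{i,j}^2=0$ one has $X_{i,j}=I+E_{i,j}$, still block-diagonal. Since the set of elements of $\Sl_n^1(\Z_p)$ of the required block-diagonal shape is a closed subgroup under multiplication and inversion, it contains $H$. For (ii), I would verify that conjugation of each topological generator $X_{i,j}$ of $H$ by each topological generator of $\gamchapzero$ lands back in $H$. Conjugation of $X_{i,j}$ by a generator $X_{i',j'}$ with indices in the same block stays in $H$ because both elements already belong to $H$. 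Conjugation by an $X_{i',j'}$ with indices in the opposite block is trivial, since matrices supported in disjoint blocks commute. Conjugation by a diagonal $Y_l$ rescales the unique off-diagonal entry, producing an element of the form $\exp(\lambda E_{i,j})=X_{i,j}^{\lambda}$ with $\lambda\in\Z_p^{\times}$, which lies in the closed subgroup $H$ since $H$ contains $\overline{\langle X_{i,j}\rangle}\simeq\Z_p$.

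\medskip

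The main, and rather mild, obstacle is the bookkeeping for (ii): one must account not only for conjugation by generators already in $H$ (trivial) but also by the diagonal elements $Y_l$, and use the closedness of $H$ to absorb the resulting $\Z_p$-power. Beyond this verification, the entire argument reduces to transporting the obvious block-stability in $\sl_n$ through the bijection of Theorem \ref{Correspondance}.
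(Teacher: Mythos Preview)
Your proposal is correct and follows precisely the route indicated by the paper, which simply records that (i) is a consequence of Proposition \ref{pointfixe-uniform} and that (ii) is ``an easy computation'' (with (iii) being immediate from the block shape). Your write-up merely makes these two steps explicit, so there is nothing to add.
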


\begin{proof} (i) is a consequence of  Proposition \ref{pointfixe-uniform}.

(ii) is an easy computation. 
\end{proof}

Hence one sees that the subgroup  $H$ is of dimension (as variety over $\Q_p$) at most $k^2+(n-k)^2$ which is strictly smaller than $n^2-1$.
On the other hand,  the quotient $\displaystyle{{\gamchapzero}/H}$ is generated by the diagonal  matrices, and is hence abelian; it will be finite if the subgroup  $\displaystyle{\gamchapzero}$ is open in $\Gamma$, because $\Gamma$ is FAb. Therefore $\Gamma$, which is of dimension $n^2-1$, is of the same dimension as $\gamchap$, which can not be of the same dimension as  $H$. Then $\displaystyle{\gamchapzero \subsetneq \gamchap}$, which proves that  the action of $\sigma$ on $\Gamma$ is \fpm.

\begin{prop} \label{prop-sln} Let $n\geq 2$ and let $\sigma=\sigma_A$ with  $\A \in \Gl_n(\Z_p)$ of order $2$. 
Then 
\begin{enumerate}
\item[(i)] $\Sl_n^1(\Z_p)/\Sl_n^1(\Z_p)_\sigma \simeq (\Z/p\Z)^{2 k(n-k)}$ ;
\item[(ii)]  The action of $\sigma$ on $\Sl_n^1(\Z_p)$  is \fpm.
\end{enumerate}
\end{prop}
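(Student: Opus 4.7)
My plan is to treat parts (ii) and (i) in turn, building on the case-analysis already sketched in the paragraph preceding the statement. Throughout I assume $A$ has been conjugated to the diagonal form with $+1$ entries in positions $1,\dots,k$ and $-1$ entries in positions $k+1,\dots,n$, and that $0<k<n$ (otherwise $\sigma$ would be trivial).

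For (ii) I would argue by contradiction. Suppose the action of $\sigma$ on $\Gamma:=\Sl_n^1(\Z_p)$ is not \fpm. By Proposition \ref{critere-non-size} this forces $\gamchap=\gamchapzero$. Since $\Gamma$ is FAb, Proposition \ref{fini1} makes $G:=\Gamma/\gamchap$ finite, so $\gamchap=\gamchapzero$ is open in $\Gamma$ and hence has full analytic dimension $n^2-1$. On the other hand, as recalled just before the statement, $\gamchapzero$ contains the block-diagonal subgroup $H$ of dimension at most $k^2+(n-k)^2$, and $\gamchapzero/H$ is abelian (generated by the images of the diagonal $Y_l$). As an abelian quotient of an open FAb subgroup, $\gamchapzero/H$ is finite, whence $\dim\gamchapzero=\dim H\leq k^2+(n-k)^2<n^2-1$, a contradiction.

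For (i) I would first read off $d_pG=d-r=2k(n-k)$ from Corollary \ref{coro-generateurs}, using $d=n^2-1$ and $r=k^2+(n-k)^2-1$. Since $\sigma$ has order $2$ and acts fixed-point-freely on $G$ by Proposition \ref{fini1}, Proposition \ref{FPF} gives that $G$ is abelian, so it remains to check that $G$ has exponent $p$. The anti-fixed generators of $\Gamma$ are exactly the $X_{i,j}=\exp(E_{i,j})$ with exactly one of $i,j$ in $\{1,\dots,k\}$, and their images generate $G$. For each such pair, I would pick a diagonal fixed generator $Y_l=\exp(D_l)\in\gamchap$ for which $\mathrm{ad}(D_l)$ acts on $E_{i,j}$ by a non-zero eigenvalue $\lambda\in\{\pm p,\pm 2p\}$. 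Because $E_{i,j}^2=0$, a direct application of Campbell--Hausdorff (or just matrix multiplication) yields
\[Y_l\,X_{i,j}\,Y_l^{-1}=\exp\bigl(e^{\mathrm{ad}(D_l)}E_{i,j}\bigr)=\exp(e^{\lambda}E_{i,j})=X_{i,j}^{e^{\lambda}},\]
so $[Y_l,X_{i,j}]=X_{i,j}^{e^{\lambda}-1}$. For $p>2$ the exponent $e^{\lambda}-1$ lies in $p\Z_p^{\times}$, while the commutator lies in $\gamchap$ by normality; therefore the image of $X_{i,j}$ in $G$ is killed by $p$. Combined with the $p$-rank computation, this gives $G\simeq(\Z/p\Z)^{2k(n-k)}$.

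The only delicate point is ensuring that a suitable $Y_l$ exists for each anti-fixed $X_{i,j}$, i.e.\ that $\mathrm{ad}(D_l)$ has a non-zero eigenvalue on $E_{i,j}$ for some $l\in\{2,\dots,n\}$ fixed by $\sigma$: since $0<k<n$, taking $l$ to be whichever of $i,j$ lies in $\{2,\dots,n\}$ always works, because at most one of $i,j$ can equal $1$. I do not anticipate any genuine obstacle beyond this indexing bookkeeping; the heart of the argument is the dimension count in (ii) and the commutator identity in (i).
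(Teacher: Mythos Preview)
Your proposal is correct and follows the paper's own line. Part (ii) is precisely the dimension argument with the block-diagonal subgroup $H$ that the paper carries out in the paragraph immediately preceding the proposition (you have simply recast it as a clean contradiction via Proposition~\ref{critere-non-size}); for part (i) the paper merely says ``it is a computation as for $\Sl_2^1(\Z_p)$'', and your commutator identity $Y_l X_{i,j} Y_l^{-1}=X_{i,j}^{e^{\lambda}}$ with $\lambda\in\{\pm p,\pm 2p\}$ is exactly the general-$n$ version of the $\Sl_2^1$ calculation $XZX^{-1}Z^{-1}=X^{1-e^{2p}}$ used there, combined with the rank count $d-r=2k(n-k)$ from Corollary~\ref{coro-generateurs}.
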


\begin{proof}
One only has to verify (i): it is a computation as for $\Sl_2^1(\Z_p)$.
\end{proof}



\part{Arithmetic results} \label{PartII}

First, let us recall some notations. 
\begin{itemize}
\item  $p$ is a prime number.
\item If   $\K$  is a fixed number field, and if    $S$ and $T$ are  two finite and disjoints sets of primes ideals of $\O_K$, denote by  $\K_S^T$ the maximal pro-$p$ extension of  $\K$ unramified outside  $S$ and totally split at  $T$;  $\GST = \Gal(\K_S^T/\K)$.
\item
\emph{We assume throughout that  $S$ contains no primes above $p$} and  that for finite places  $\p\in S$, we have  $\# \O_\K/\p \equiv 1 (\bmod \ p).$  Hence, by class field theory, the pro-$p$ group  $\GST$ is {FAb}. 
\item Put $\Cl_S^T(\K):=\GST^{ab}$. It is the  $p$-Sylow of the $S$-ray  $T$-class class group of $\K$.
\item  
Let  $\O_{\K}^T$ be the group of $T$-units
  of~$\K$.
  \item Let $\O_{\K,S}^T$ be the subgroup of  $\O_\K^T$ defined as the kernel of the natural map  $$\O_\K^T \rightarrow \prod_{v\in S} \fq(\K,v)^\times,$$
 where  $\fq(\K,v)$ denote the residue field  of $\K$ at $v$; in other words, $\O_{\K,S}^T$ is the group of   $T$-units of  $\K$ congruent to   $1$ modulo $\p$, for all $\p \in S$.
\item If $\L/\K$ is an extension of  $\K$, we still denote by abuse,  $S=S(\L)$ be the set of primes of $\O_\L$ above the primes $\p\in S$.
\end{itemize}

\section{On the freeness of $T$-units in the non-semisimple case} \label{section-unités}

\subsection{The context} \label{thecontext}
\begin{itemize}
\item Let us start with a number field $\k$ with two finite and disjoints sets  $S$ and $T$ of primes of $\k$.  
\item Let $\L/\k$ be a finite Galois extension with Galois group $\G$. We assume that $\G$ has only one $p$-Sylow  subgroup $G$; put $\Delta:= \G/G$. Hence $\Delta$ is a finite group of order coprime to $p$. Put $\K=\L^G$. The group  $\O_{\L}^T$ of $T$-units of $\L$ has a structure of 
 $\Z[\G]$-module. 
\item Put $\E:=\O_\L^T/(\O_\L^T)^p=\fq_p\otimes \O_\L^T$.
\item Henceforth, we assume that: 
\begin{enumerate}
\item[$-$] the archimedean places of $\K$ split completely in $\L$,
\item[$-$]   the primes of  $T$ split completely in $\L/\k$, 
\item[$-$]  the extension  $\L/\K$ is unramified outside $S$.
\end{enumerate}
\item Denote by $S_{ram}$ the set of primes of primes of $\O_\K$ in $S$ that are ramified in $\L/\K$.  
\end{itemize}

\medskip
 
We are interested in finding some arithmetic situations for which the  $\fq_p[\G]$-module  $\E$ contains a non-trivial  free $\G$-module.
 To this end, we are going to use and extend  an idea of  Ozaki \cite{Ozaki}.
 
 \medskip

 Let us first recall the  semisimple version of Dirichlet's  unit theorem that will be of interest to us.

\begin{theo}[Dirichlet's unit theorem - see \cite{gras2}]\label{Dirichlet}
With all the notation and assumptions as listed above, let  $D_1,\cdots, D_m$ be the decomposition groups of the archimedean places of $\k$ in $\K/\k$.
Then one has the  isomorphism of  $\fq_p[\Delta]$-modules:
 $$\1\oplus  \O_\K^T/(\O_\K^T)^p  \simeq \Ind_{D_1}^\Delta \fq_p \oplus \cdots \oplus \Ind_{D_m}^\Delta \fq_p \oplus \fq_p[\Delta]^{|T|} \oplus \chi_p ,$$
 where   $\chi_p$ is the  cyclotomic character corresponding to the action  of $\Delta$ on the $p$th roots of the unity  in $\K$.
\end{theo}

\subsection{Technical results}

\subsubsection{On some structural elements}

Let $\Delta$ be a finite group of order coprime to $p$.

\begin{defi} If $M$ is a finitely generated $\fq_p[\Delta]$-module, denote by  $\r{M}$ its minimal number of  generators. 
\end{defi}

\begin{rema} The semisimplicity of $\fq_p[\Delta]$ allows us to study  $\r{M}$.
First, let us recall that an $\fq_p[\Delta]$-module $M$ is monogenic if and only if its character  $\chi_M$ is contained in the regular character $\Reg$ of $\fq_p[\Delta]$ (we write this as $\chi_M \leq \Reg$). Hence, given an  $\fq_p[\Delta]$-module $M$, to determine $\r{M}$  is equivalent to {resolving} the  decomposition of  $\chi_M$ into characters, all of them being in the regular representation.
\end{rema}

Denote by   $(\cdot)^*$   the  Pontryagin dual ${\Hom}(\ \cdot \ , \Q_p/\Z_p)$.
From now on, all the $\fq_p[\Delta]$-modules  under consideration are assumed finitely generated.

\begin{lemm}\label{rgsemisimple}  

(i)If    $A \twoheadrightarrow B$ is a surjective  $\fq_p[\Delta]$-morphism, then $\r{A} \geq \r{B}$. 

(ii) If $A$ is an $\fq_p[\Delta]$-module, then $\r{A}=\r{A^*}$.

(iii) If $A$ and $B$ are  $\fq_p[\Delta]$-modules such that    $A \hookrightarrow B$, then $\r{A} \leq \r{B}$.

(iv) Let $ \cdots \longrightarrow A \longrightarrow B \longrightarrow C \longrightarrow \cdots  $ be an exact sequence of   $\fq_p[\Delta]$-modules. Then  
 $$ \r{B}\leq \r{A}+\r{C} \ \cdot$$

(v) If $A$ and  $B$ are two  $\fq_p[\Delta]$-modules such that $B \simeq\fq_p[\Delta]^t \oplus A$, then $\r{B}=t+ \r{A}$. 

(vi) If $A$ and $B$ are two $\Delta$-modules such that  $pA=0$ and  $A \hookrightarrow B$, then $\r{A} \leq d_p B$.
\end{lemm}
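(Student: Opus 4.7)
The common thread is that, since $\gcd(p,|\Delta|)=1$, Maschke's theorem makes $\fq_p[\Delta]$ semisimple; every finitely generated $\fq_p[\Delta]$-module $M$ therefore splits as $M \cong \bigoplus_i V_i^{m_i(M)}$ over the finite set $\{V_i\}$ of isomorphism classes of simple modules. The Wedderburn decomposition $\fq_p[\Delta] \cong \prod_i M_{n_i}(D_i)$ (each $D_i$ a finite field, by Wedderburn's little theorem) yields the explicit formula
$$\r{M} \ = \ \max_i \lceil m_i(M)/n_i \rceil,$$
where $\fq_p[\Delta] \cong \bigoplus_i V_i^{n_i}$ is the regular representation. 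This formula is the single calculational ingredient; everything that follows is bookkeeping made essentially trivial by semisimplicity.

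Parts (i), (iii), (iv) follow directly. For (i), any generating set of $A$ maps to a generating set of $B$ under a surjection. For (iii), the semisimple splitting $B \cong A \oplus C$ together with (i) applied to the projection $B \twoheadrightarrow A$ gives $\r{A} \leq \r{B}$. For (iv), I would factor the given sequence through $I := \mathrm{im}(A \to B) = \ker(B \to C)$: from $A \twoheadrightarrow I$ and $B/I \hookrightarrow C$ one gets $\r{I} \leq \r{A}$ and $\r{B/I} \leq \r{C}$ by (i) and (iii), and the splitting $B \cong I \oplus (B/I)$ lets one concatenate generating sets, yielding $\r{B} \leq \r{I}+\r{B/I} \leq \r{A}+\r{C}$.

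For (v), the upper bound comes from (iv) and the obvious $\r{\fq_p[\Delta]^t} \leq t$; the matching lower bound drops out of the explicit formula via the identity $\lceil (a_i + t n_i)/n_i \rceil = t + \lceil a_i/n_i \rceil$ applied to $A \cong \bigoplus V_i^{a_i}$. For (ii), Pontryagin duality sends simple $\fq_p[\Delta]$-modules to simple ones, and the self-duality $\fq_p[\Delta]^* \cong \fq_p[\Delta]$ coming from $\delta \mapsto \delta^{-1}$ forces the resulting involution $V_i \mapsto V_i^*$ on simples to preserve the multiplicities $n_i$ of the regular representation; since $m_i(A^*) = m_{i^*}(A)$, the formula for $\r{\cdot}$ is invariant under this involution and gives $\r{A^*} = \r{A}$. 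Finally, for (vi), $pA = 0$ forces the inclusion $A \hookrightarrow B$ to factor through $B[p]$, so (iii) gives $\r{A} \leq \r{B[p]}$, and any $\fq_p$-basis of $B[p]$ \emph{a fortiori} generates it as an $\fq_p[\Delta]$-module, whence $\r{B[p]} \leq \dim_{\fq_p} B[p] \leq d_p B$. No step is a real obstacle; the only point requiring care is the explicit formula (because of the possibly non-split isotypic components of $\fq_p[\Delta]$), and this is settled at once by Wedderburn.
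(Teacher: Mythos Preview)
Your proof is correct, and it takes a somewhat different organizational route from the paper. The paper works throughout with characters: it uses the criterion that $M$ is monogenic iff $\chi_M \leq \Reg$, so $\r{M}$ is the minimum number of pieces in a decomposition $\chi_M=\chi_1+\cdots+\chi_s$ with each $\chi_i\leq\Reg$. For (ii) it dualizes each piece (using $\Reg^*=\Reg$) to get $\r{A^*}\leq\r{A}$ and then invokes $A\simeq A^{**}$; for (iii) it passes to Pontryagin duals to turn the injection into a surjection and applies (i) and (ii); for (v) it argues greedily, completing $\chi_1$ to a full copy of $\Reg$ by borrowing irreducibles from the remaining $\chi_i$. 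You instead establish the closed Wedderburn formula $\r{M}=\max_i\lceil m_i(M)/n_i\rceil$ once, after which (ii) and (v) become one-line computations, and you prove (iii) directly from a semisimple splitting $B\cong A\oplus C$ without the detour through duality. Your approach is more systematic and makes (v) especially transparent; the paper's character language is lighter-weight in that it never invokes the full Wedderburn structure theorem, only the $\chi\leq\Reg$ criterion. Both arguments for (vi) amount to the same thing: $A\hookrightarrow B[p]$, and then bound by the $\fq_p$-dimension.
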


\begin{proof}

(i) is obvious. 

For (ii), let us write $\chi_A=\chi_1+\cdots + \chi_s$  with  $s=\r{A}$ and  $\chi_i \leq \Reg$, $i=1,\cdots, s$. Then $\chi_{A^*}=\chi_A^*=  \chi_1^*+\cdots + \chi_s^*$, where $\chi_i^*$ is the dual character of  $\chi_i$ defined by $\chi^*(s)=\chi(s^{-1})$. But trivially,  $\chi_i^* \leq \Reg$ and then
$\r{A^*}\leq \r{A}$. It then implies,  $\r{A^{**}} \leq \r{A^*}$ and the conclusion holds by using the   $\Delta$-isomorphism $A\simeq A^{**}$.
 
 (iii) is a consequence of (i) and  (ii), after passing to the  dual.

 (iv) is consequence of  (i) and (iii).

For (v): it suffices to prove the equality for $t=1$. First of all,  one has   
$\r{B}\leq 1 + \r{A}$, \emph{i.e.} $\r{A} \geq \r{B}-1$.
Then let us write $\chi_B= \chi_1+\cdots \chi_s$, with $s=\r{B}$ and $\chi_i\leq \Reg$, $i=1,\cdots, s$. As $\chi_B$ contains the regular representation $\Reg$,  one may complete for example the  character $\chi_1$ with some irreducible characters coming from the other characters $\chi_i$, $i>1$, to obtain $\Reg$. Then we get $\chi_B = \Reg + \chi_2'+\cdots+ \chi_s'$, with for  $i=2,\cdots, s$, $\chi_i'\leq \chi_i\leq \Reg$. It implies that  $\Reg + \chi_A = \chi_B= \Reg+ \chi_2'+\cdots + \chi_s'$ showing  
$\chi_A= \chi_2'+\cdots + \chi_s'$ and then $\r{A} \leq s-1=\r{B}-1$. In conclusion, one has the  desired equality:  $\r{B}=\r{A}+1$.

(vi) It is a trivial estimation. It is clear that $\r{A} \leq d_p A$; the conclusion follows from the fact that   $d_p A \leq d_p B$.
\end{proof}

\medskip

\subsubsection{Semilocal rings and a lower bound} \label{section5.1.3}

We first recall some classical results about semilocal rings; for more details see  \cite[Chapter 2]{Lam}.

\medskip

Let us conserve the arithmetic context of the previous section \ref{thecontext}:  we start with $\G \simeq G \rtimes \Delta$, where   $G$ is the unique $p$-Sylow $G$ of $\G$.  

\medskip

The group algebra  $\fq_p[\G]$ is a semilocal ring of radical $\displaystyle{R:= \langle g- 1, \ g\in G \rangle}$;  the quotient  $\fq_p[\G]/R$ is isomorphic to the semisimple algebra $\fq_p[\Delta]$ (see \cite{CR}, \S 64, exercice).

\medskip

The algebra $\fq_p[\G]$ is also Frobenius: one has the  $\fq_p[\G]$-modules isomorphism $\fq_p[\G] \simeq \big(\fq_p[\G]\big)^*$, coming from the symmetric non-degenerate bilinear   form $\displaystyle{(f,g)=\sum_{g\in \G}f(h) g(h^{-1})}$, for  $\displaystyle{f=\sum_h f(h)h}$ and  $\displaystyle{g=\sum_h g(h) h} \in \fq_p[\G]$. Thus, every  free submodule $ M_0$ of a finitely generated $\fq_p[G]$-module  $ M$ is in direct factor in  $M$: indeed, if  $\fq_p[\G] \hookrightarrow M$ then by duality  $M^* \twoheadrightarrow \fq_p[\G]^*$; by  projectivity of $\fq_p[\G]$, the free module can be lifted in direct factor in $M^*$, and it suffices to take the dual again. 

\

In conclusion,  every  finitely generated  $\fq_p[\G]$-module  $M$ has a free maximal submodule  (in direct factor):  there exists an integer  $t=t(M)$ such that  $$M\simeq \fq_p[\G]^t\oplus N,$$ where $N$ is of torsion (for all  $ x \in N$, there exists $\lambda \in \fq_p[\G]$, $\lambda \neq 0$, such that $\lambda\cdot x =0$). The integer $t$ is unique.
We deduce a first relation: $$d_p M = |\G|t+ d_p N,$$
where here  $d_p$ means as usual the  $p$-rank.

On the other hand,  as $\fq_p[\G]^G\simeq \fq_p[\Delta]$, we get  $M^{G} \simeq \fq_p[\Delta]^t \oplus N^G$, and then by \ref{rgsemisimple} $(iv)$, $\r{M^G}=t+\r{N^G}$.

\medskip

 Recall that for a finitely generated $\fq_p[\G]$-module $A$, one has a $\Delta$-isomorphism: $(A^*)_G \simeq (A^G)^*$.  

\medskip

Recall now Nakayama's Lemma (see \cite[Chapter 2 \S 4]{Lam}).

\begin{lemm} \label{lemme-Nakayama}
The $\fq_p[\G]$-module $A$ is generated by  $m_1,\cdots, m_s$ if and only if, the  $\fq_p[\Delta]$-module $A/(R A)=A_G$ is generated by   $m_1 RA, \cdots, m_s RA$. In particular, one can take  $s=\r{A_G}$.
\end{lemm}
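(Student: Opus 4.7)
The plan is to derive the statement as a direct application of the classical Nakayama lemma for the semilocal ring $\fq_p[\G]$. The preliminary discussion already records what is needed: $\fq_p[\G]$ is semilocal with Jacobson radical $R=\langle g-1,\ g\in G\rangle$, and $\fq_p[\G]/R\simeq\fq_p[\Delta]$.

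First I would unwind the identification $A/RA=A_G$. Because $R$ is generated as a left ideal by the elements $g-1$, $g\in G$, the submodule $RA$ coincides with the $\fq_p$-span of $\{(g-1)a : g\in G,\ a\in A\}$, which is exactly the kernel of the canonical coinvariants map $A\twoheadrightarrow A_G$. Hence $A/RA=A_G$, and the $\fq_p[\G]$-action on this quotient factors through $\fq_p[\G]/R\simeq\fq_p[\Delta]$, giving $A_G$ its claimed $\fq_p[\Delta]$-module structure. The "only if" direction is then immediate by reduction modulo $RA$.

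For the converse, I would suppose that $\bar m_1,\ldots,\bar m_s$ generate $A_G$ as $\fq_p[\Delta]$-module and set $A':=\sum_{i=1}^{s}\fq_p[\G]\,m_i\subseteq A$. Lifting each $\fq_p[\Delta]$-relation to $\fq_p[\G]$ yields $A=A'+RA$, and passing to the quotient gives $R\cdot(A/A')=A/A'$. Since $A$ is a finitely generated $\fq_p[\G]$-module (tacit throughout this section, as noted before Lemma \ref{rgsemisimple}), so is $A/A'$, and Nakayama's lemma for the Jacobson radical $R$ of the semilocal ring $\fq_p[\G]$ forces $A/A'=0$. Thus $A=A'$, and the $m_i$ generate $A$ over $\fq_p[\G]$.

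The final assertion that one may take $s=\r{A_G}$ is then just the definition of $\r{A_G}$: pick a minimal system of $\fq_p[\Delta]$-generators of $A_G$ of size $\r{A_G}$, lift it arbitrarily to $A$, and apply the implication just established. There is no real obstacle in this argument — it is pure semilocal-ring bookkeeping; the only point that merits any care is the clean identification $RA=\ker(A\to A_G)$, which follows at once from the description of $R$ by the generators $g-1$.
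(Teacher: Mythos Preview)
Your proof is correct and is exactly the argument the paper has in mind: the paper's own ``proof'' is simply a reference to Lam, \emph{A first course in noncommutative rings}, Chapter~2, \S4, i.e.\ Nakayama's lemma for the Jacobson radical of the semilocal ring $\fq_p[\G]$. You have faithfully unpacked that citation, including the identification $RA=\ker(A\to A_G)$ via the generators $g-1$ of $R$.
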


Let us come back to our context and start with $M=\fq_p[\G]^t\oplus N$. By Lemma \ref{rgsemisimple} (ii), $\r{N^G}=\r{(N^G)^*}$. By  Nakayama's Lemma \ref{lemme-Nakayama}, the $\fq_p[\G]$-module $N^*$ can be generated by \  $\r{(N^*)_G}=\r{(N^G)^*}=\r{N^G}= \r{M^G}-t$ \ elements. Then, let us remark that: $$d_p N^*\leq (|\G|-1) \big(\r{M^G}-t\big) \cdot$$
 Indeed, as $N^*$ is of torsion (because $N$ is), for every element $x \in N\backslash \{0\}$, we have $\fq_p[\G]\cdot x \simeq \fq_p[\G]/{\rm Ann}(x)$, where ${\rm Ann}(x)$ is the annulator of $x$, it is a non zero ideal of  $\fq_p[\G]$, and  then $d_p \fq_p[\G] \cdot x  = |\G| - d_p {\rm Ann}(x) \leq |\G|-1$. 
 Then $$d_p N =d_p N^* \leq \big(|\G|-1\big) \r{N^*},$$
 by Lemma \ref{lemme-Nakayama}.

 \begin{prop}\label{minoration-rang} Let $M$ be a finitely generated $\fq_p[\G]$-module. Then there is a well-defined non-negative integer $t=t(M)$ such that  $M\simeq \fq_p[\G]^t\oplus N$, with $N$ of torsion. For this $t$, we have the following lower bound:
$$t \geq  d_p M - \big(|\G|-1\big ) \cdot \r{M^G} \cdot$$
 \end{prop}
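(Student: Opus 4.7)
The proof I would give essentially assembles the observations already made in the text immediately preceding the statement into a single chain of inequalities. My plan is as follows.

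First I would establish the decomposition $M \simeq \fq_p[\G]^t \oplus N$ with $N$ of torsion. Since $\fq_p[\G]$ is a Frobenius algebra, any free $\fq_p[\G]$-submodule of a finitely generated module splits off as a direct summand (via the self-duality $\fq_p[\G] \simeq \fq_p[\G]^*$ combined with projectivity of $\fq_p[\G]$). Iterating, there is a maximal $t \geq 0$ such that $\fq_p[\G]^t$ embeds as a direct summand; uniqueness of $t$ follows by counting, and the complementary summand $N$ is forced to be of torsion (else one could split off another free copy).

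Next I would extract three equalities/inequalities from this decomposition. Taking $p$-ranks gives $d_p M = |\G| \, t + d_p N$. Taking $G$-fixed points and using $\fq_p[\G]^G \simeq \fq_p[\Delta]$ together with Lemma \ref{rgsemisimple}(v) gives $\r{M^G} = t + \r{N^G}$. Finally, since $N$ is torsion, for every nonzero $x \in N^*$ the cyclic module $\fq_p[\G]\cdot x \simeq \fq_p[\G]/\mathrm{Ann}(x)$ has $\fq_p$-dimension at most $|\G|-1$, so applying Nakayama's Lemma \ref{lemme-Nakayama} to $N^*$ and summing over a minimal system of generators yields
\[
d_p N = d_p N^* \;\leq\; (|\G|-1) \cdot \r{(N^*)_G} = (|\G|-1)\cdot \r{(N^G)^*} = (|\G|-1)\cdot \r{N^G},
\]
where the last equality uses Lemma \ref{rgsemisimple}(ii) and the $\Delta$-isomorphism $(N^*)_G \simeq (N^G)^*$.

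Combining these, I would write
\[
d_p M \;=\; |\G|\,t + d_p N \;\leq\; |\G|\,t + (|\G|-1)\,\r{N^G} \;=\; |\G|\,t + (|\G|-1)\bigl(\r{M^G} - t\bigr),
\]
which simplifies to $d_p M \leq t + (|\G|-1)\,\r{M^G}$, i.e. the desired inequality $t \geq d_p M - (|\G|-1)\,\r{M^G}$. There is no genuine obstacle here: the only mildly delicate point is the bookkeeping with duality ensuring that $\r{(N^G)^*} = \r{N^G}$ and the use of the Frobenius property to guarantee that free submodules split off; both have been isolated as lemmas already.
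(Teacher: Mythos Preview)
Your proof is correct and follows essentially the same approach as the paper: it assembles exactly the ingredients laid out in the text preceding the proposition (the Frobenius splitting, the rank relation $d_p M = |\G|\,t + d_p N$, the identity $\r{M^G} = t + \r{N^G}$, and the torsion bound $d_p N \leq (|\G|-1)\,\r{N^G}$ via Nakayama and duality) into the same chain of inequalities. The paper's own proof is simply a two-line summary of this computation, so your write-up is in fact more detailed but identical in substance.
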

 
 \begin{proof}
 Note that   $d_p N= d_p M - t |\G|$ and that  $d_p N \leq (|\G|-1) \big(\r{\MM^G}-t\big)$: 
 the conclusion is  obvious.
 \end{proof}

\subsection{Cohomology of units}

We are going to apply  Proposition \ref{minoration-rang} to the  $\fq_p[\G]$-module $M=\E=\fq_p\otimes \O_{\L}^T$.
Our arithmetic context will allow us to give some situations where, {in the notation of the preceding proposition,}  $t$ may {be as big as possible} (thanks to the choice of $T$). To do this, we need to give a sharp estimate of $\r{\E^G}$.
More precisely, we propose to show the following result in this direction. 

\begin{theo}\label{constante-liberte}
{
As introduced at the beginning of this section,
let $\L/\k$ be a finite Galois extension with Galois group $\G$. We assume that $\G$ has only one $p$-Sylow  subgroup $G$; put $\Delta:= \G/G$. Hence $\Delta$ is a finite group of order coprime to $p$. Put $\K=\L^G$. We assume that the archimedean places of $\K$ split completely in $\L$.
There exists a constant $\A=\A(\L/\K) \in \Z$ (depending on   the "arithmetic" in $\L/\K$) such that if $m$ is any given positive integer, there exists a choice of a  set $T$ of size $|T|\leq m+A$ consisting of finite places of $\k$ that split completely in $\L/\k$, such that the $\fq_p[\G]$-module $\E=\fq_p\otimes \O_{\L}^T$ contains a  submodule isomorphic to $\fq_p[\G]^m$.
}
\end{theo} 

This theorem will be proved in two stages  (Theorem  \ref{liber} and Theorem \ref{liberté-inégalité-bis}) which will give an explicit formula for $\A(\L/\K)$ depending on whether $\L$ contains a primitve $p$th root of unity or not.  The proofs will occupy us in the next two subsections.

\begin{rema}
We will see that when $[\K:\Q]$ is small with respect to $|\G|$ then $\A >0$. But, as we will see too, the context of the cyclotomic $\Z_p$-extension produces some situations where $\A$ is negative.
\end{rema}

\begin{rema}
When the arithmetic context  $\L/\k$ is fixed, the growth  of $T$ allows us to  assure that  the $T$-units admit an arbitrarily large free $\fq_p[\G]$-submodule. This explains the appearance of the constant $\A$  in Theorems \ref{maintheo1bis}
and \ref{maintheorem0}.
\end{rema}

To achieve our goal, we are going to  develop  an idea of  Ozaki  \cite{Ozaki}.
Let us introduce a bit more notation. Let us write   $d_\infty$ for the number of archimedean places of  $\k$ that split completely in  $\K/\k$ and  $r_\infty$ the number of ramified archimedean places ({\emph{i.e.} those that are real in $\k$ and not real in $\K$). {We let $(r_1,r_2)$ be the signature of $\k$. We also let $\mu_p=\langle \zeta_p\rangle$ be the group of $p$th roots of unity. The proof will very much depend on whether non-trivial $p$th roots of unity are present in $\L$.}

\subsubsection{When $\O_\L^T$ does not contain $\mu_p$}

  We prove
 
\begin{theo} \label{liber} Let $\L/\K$ be a Galois extension with Galois group $\G \simeq G \rtimes \Delta$, where the order of $\Delta$ is coprime to the $p$-Sylow $G$ of $\G$. \emph{Suppose that $\L$ does not containt $\mu_p$}. Let us decompose the $\fq_p[\G]$-module $\E:= \fq_p \otimes \O_\L^T$ as $\fq_p[\G]^t\oplus N$, where $N$ is of  torsion.
Under the arithmetic conditions of section \ref{thecontext}, one has $$ t \geq |T|+ d_\infty +\frac{1}{2}r_\infty -(|\G|-1)\big(r_1+r_2-d_\infty-\frac{1}{2} r_\infty +d_p \Cl_S^T(\K)+|\G||S|+|S_{ram}|\big) -1 \cdot$$
\end{theo}

\begin{rema}\label{remaA0}
When $\mu_p$ is not contained in $\O_\L^T$, put $$\A=-\big(d_\infty +\frac{1}{2}r_\infty -(|\G|-1)\big(r_1+r_2-d_\infty-\frac{1}{2} r_\infty +d_p \Cl_S(\K)+|\G||S|+|S_{ram}|\big) -1\big).$$
{We note that $d_p \Cl_S^T(\K)\leq d_p \Cl_S(\K)$.}
\end{rema}

The goal of the end of this section  is to prove Theorem \ref{liber}.

\medskip
 
First of all, let us remark that all the cohomology groups  $H^\bullet(G, \cdot)$ are  finite $\Delta$-modules.

\medskip

Let us start with the exact sequence of  $G$-modules  corresponding to raising to the $p$th-power~: $$\1 \longrightarrow \O_\L^T \stackrel{p}{\longrightarrow} \O_\L^T \longrightarrow \E \longrightarrow 1 \cdot $$
We then have \begin{eqnarray}\label{suite-exacte-fonda} \fq_p\otimes \O_\K^T \hookrightarrow \E^G \twoheadrightarrow H^1(G,\O_\L^T)[p] \cdot\end{eqnarray}
Thanks to Lemma \ref{rgsemisimple} (iv), this exact sequence of $\fq_p[\Delta]$-module gives the inequality  \begin{eqnarray}\label{suite-exacte2} \r{\E^G} \leq \r{H^1(G,\O_\L^T)[p]}+\r{\fq_p\otimes \O_\K^T} \cdot \end{eqnarray}

\

We want to  bound the quantities $\r{H^1(G,\O_\L^T)[p]}$ and $\r{\fq_p\otimes \O_\K^T}$.

\

As the extension $\L/\K$ is unramified outside  $S$, it is possible to control the cohomology group $H^1(G,\O_\L^T)$. Indeed put $j_{\L/\K,S}^T : \Cl_{S}^T(\K) \rightarrow \Cl_{S}^T(\L)$ for the natural "conorm" morphism of ideal classes.
The following proposition, that is a generalization of a result of Iwasawa \cite{Iwasawa} when $T=S=\emptyset$,  has been proved in   Maire  \cite{Maire}:

\begin{prop}\label{prop-maire}
Let $\L/\K$ be a finite  Galois extension unramified outside $S$ and tamely ramified at  $S$. Put  $G=\Gal(\L/\K)$. Then $H^1(G,\O_{L,S}^T) \simeq_\Delta \ker (j_{L/K,S}^T)$.
\end{prop}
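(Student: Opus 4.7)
The strategy is to generalize Iwasawa's classical argument for the unramified case $S=T=\emptyset$ through an explicit $1$-cocycle construction relating capitulation to $H^1$. First I would define a $\Delta$-equivariant map $\Phi\colon \ker(j_{L/K,S}^T) \to H^1(G, \O_{L,S}^T)$ as follows. Given $[\mathfrak{a}] \in \ker j$ with $\mathfrak{a} \in I_K^T$, the hypothesis $j_{L/K,S}^T([\mathfrak{a}])=0$ gives $\mathfrak{a}\O_L = (\beta)$ for some $\beta \in \L^\times$ coprime to $T$ with $\beta \equiv 1 \pmod \P$ for every $\P \mid S$. Setting $u_\sigma := \sigma(\beta)/\beta$ for $\sigma \in G$, the $G$-invariance of $\mathfrak{a}$ shows each $u_\sigma$ is a unit, and the conditions on $\beta$ propagate to give $u_\sigma \in \O_{L,S}^T$. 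A routine verification shows $(u_\sigma)$ is a $1$-cocycle and that a different choice of $\beta$ changes $u$ by a coboundary (the ratio of two valid choices lies in $\O_{L,S}^T$), hence $\Phi$ is well-defined; $\Delta$-equivariance is inherent in the construction.

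Injectivity of $\Phi$ is direct: if $u_\sigma = \sigma(v)/v$ with $v \in \O_{L,S}^T$, then $\beta/v$ is $G$-fixed and lies in $\K^\times$, is coprime to $T$, and satisfies $\beta/v \equiv 1 \pmod \p$ for $\p \in S$, so $\mathfrak{a} = (\beta/v)$ represents the trivial class in $\Cl_S^T(K)$.

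The core of the proof is surjectivity. Given any $1$-cocycle $u \colon G \to \O_{L,S}^T$, Hilbert 90 applied to $\L^\times$ produces $\beta \in \L^\times$ with $u_\sigma = \sigma(\beta)/\beta$. Two steps remain: (a) adjust $\beta$ modulo $\K^\times$ and modulo $\O_{L,S}^T$-coboundaries, via weak approximation, so that $\beta$ becomes coprime to $T$ and congruent to $1$ modulo each prime of $L$ above $S$; and (b) show that the resulting divisor $(\beta)$ descends to an ideal $\mathfrak{a} \in I_K^T$ with $\mathfrak{a}\O_L = (\beta)$. The principal difficulty is (b): since $\L/\K$ is ramified at $S$, the cokernel of the natural inclusion $I_K^T \hookrightarrow (I_L^T)^G$ equals $\bigoplus_{\p \in S_{ram}} \Z/e_\p\Z$, generated by the ideals $\mathfrak{b}_\p := \prod_{\P \mid \p} \P$, so a $G$-stable ideal of $L$ coprime to $T$ need not come from $K$. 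The tame ramification hypothesis rescues the argument through the following local calculation: let $\sigma_0$ generate the cyclic inertia subgroup $I_\P$ at $\P \mid \p \in S_{ram}$, so that $\sigma_0(\pi_\P) \equiv \zeta\,\pi_\P \pmod{\P^2}$ for a primitive $e_\p$-th root of unity $\zeta$ in $\O_L/\P$; expanding $\sigma_0(\beta)/\beta$ to leading order at $\P$ yields $u_{\sigma_0} \equiv \zeta^{v_\P(\beta)} \pmod \P$. The cocycle condition $u_{\sigma_0} \equiv 1 \pmod \P$ thus forces $e_\p \mid v_\P(\beta)$, which is exactly the divisibility required for the $\p$-part of $(\beta)$ to descend to $I_K^T$. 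The $T$-split hypothesis is used in step (a) to guarantee enough flexibility in weak approximation. The resulting class $[\mathfrak{a}] \in \Cl_S^T(K)$ lies in $\ker j_{L/K,S}^T$ and satisfies $\Phi([\mathfrak{a}]) = [u]$, completing the proof.
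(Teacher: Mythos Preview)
The paper itself gives no proof of this proposition: the entire argument reads ``See \cite{Maire}.'' So there is nothing to compare against in the present paper; your task was really to reconstruct the argument of the cited reference, and your sketch does this correctly along the classical Iwasawa line.

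The cocycle construction, well-definedness, $\Delta$-equivariance, and injectivity are all fine. For surjectivity, the crucial step is indeed (b), and your tame local computation is exactly the right idea: writing $\beta=\pi_\P^{n}w$ with $w$ a local unit, the inertia generator $\sigma_0$ acts trivially on the residue field, so $\sigma_0(w)/w\equiv 1\pmod\P$, while $\sigma_0(\pi_\P)/\pi_\P\equiv\zeta\pmod\P$ with $\zeta$ a primitive $e_\p$-th root of unity; hence $u_{\sigma_0}\equiv\zeta^{n}\pmod\P$, and the hypothesis $u_{\sigma_0}\in\O_{L,S}^T$ forces $e_\p\mid n=v_\P(\beta)$. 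This is precisely where tameness enters and is the heart of the generalization beyond Iwasawa's unramified case.

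One small correction to step~(a): you cannot in general arrange $\beta$ to be coprime to $T$, since $u_\sigma\in\O_{L}^T$ allows $v_\Q(\beta)$ to vary with $\Q\mid\q\in T$, and no $\K^\times$-twist can kill such variation. Fortunately this is irrelevant, as primes in $T$ are already trivial in $\Cl_S^T$. What does matter is the congruence at $S$. Here a short extra argument is needed: once $\beta$ is a unit at each $\P\mid\p$, the cocycle condition $\sigma(\beta)\equiv\beta\pmod\P$ for all $\sigma\in G$ shows first that $\bar\beta_\P\in\O_\K/\p$, and second (taking $\tau$ with $\tau(\P)=\P'$ and using that $\tau$ fixes $\O_\K/\p$) that $\bar\beta_\P=\bar\beta_{\P'}$ as elements of $\O_\K/\p$. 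Thus a single $\alpha\in\K^\times$ chosen by weak approximation achieves $\beta/\alpha\equiv 1\pmod\P$ for every $\P\mid S$ simultaneously. With this refinement your argument is complete.
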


\begin{proof} See \cite{Maire}.
\end{proof}

\medskip

Let us estimate the difference between  $\O_\L^T$ and $\O_{L,S}^T$.
Start with the exact sequence $$1 \longrightarrow \O_{L,S}^T \longrightarrow \O_\L^T \longrightarrow B \longrightarrow 1,$$
where $\displaystyle{\varphi: B \hookrightarrow \prod_{v\in S} \fq(\L,v)^\times}$ and where  $\fq(\L,v)$
is the residue field of  $\L$ at $v$.
One has $$ \cdots \longrightarrow  H^1(G,\O_{\L,S}^T) \longrightarrow H^1(G,\O_\L^T) \longrightarrow H^1(G,B) \longrightarrow \cdots$$
and  $$\cdots \longrightarrow H^0(G,{\rm coker} \varphi) \longrightarrow H^1(G,B) \longrightarrow H^1(G,\prod_{v\in S} \fq(\L,v)^\times) \longrightarrow \cdots$$

Denote by $\alpha$ the morphism $\alpha: H^1(G,\O_\L^T)[p] \longrightarrow H^1(G,B)[p]$. Then, one has the 
 exact sequence of  $\fq_p[\Delta]$-modules: $$ 1 \longrightarrow \ker(\alpha) \longrightarrow H^1(G,\O_\L^T)[p] \longrightarrow \Im(\alpha) \longrightarrow 1 \cdot$$ 
By Lemma \ref{rgsemisimple} (iv), one has \begin{eqnarray} \label{inegalite10}\r{H^1(G,\O_L^T)[p]} \leq \r{\Im(\alpha)} + \r{\ker(\alpha)}. \end{eqnarray}

By Lemma \ref{rgsemisimple} (iii), as $\Im(\alpha) \hookrightarrow H^1(G,B)[p]$, one obtains \begin{eqnarray}\label{inegalite1110} \r{\Im(\alpha)} \leq \r{H^1(G,B)[p]}. \end{eqnarray} Moreover, if we denote   $B':=\Im\big(B^G \rightarrow H^1(G,\O_{\L,S}^T)\big)$, then  $$\ker(\alpha) \simeq \big(H^1(G,\O_{\L,S}^T)/B'\big)[p] \hookrightarrow H^1(G,\O_{\L,S}^T)/B' \twoheadleftarrow H^1(G, \O_{\L,S}^T)$$ and then, by Proposition \ref{prop-maire} and Lemma \ref{rgsemisimple} (vi), 
one obtains the upper bound:
\begin{eqnarray} \label{inegalite110} \r{\ker(\alpha)} \leq  d_p \Cl_S^T(\K).\end{eqnarray}
Using (\ref{inegalite10}), (\ref{inegalite1110}) and (\ref{inegalite110}),  one obtains:\begin{eqnarray}\label{inegalite11} \r{H^1(G,\O_\L^T)[p]}\leq \r{H^1(G,B)[p]}+ d_p \Cl_S^T(\K) \cdot\end{eqnarray}

\

To estimate $\r{H^1(G,B)[p]}$, 
let $\beta$  be the morphism $$\displaystyle{\beta: H^1(G,B)[p] \longrightarrow \big(H^1(G,\prod_{v\in S} \fq(\L,v)^\times)\big)[p]}.$$ One has
\begin{eqnarray}\label{inegalite20} \r{H^1(G,B)[p]} \leq \r{\Im(\beta)}+\r{\ker(\beta)}.\end{eqnarray} 
As before, the  evaluation of  $\r{\Im(\beta)}$ is made thanks to the estimation of $$\displaystyle{\r{H^1(G,\prod_{v\in S} \fq(\L,v)^\times)[p]}}.$$
By Shapiro's Lemma, one has the $\Delta$-isomorphism $$\displaystyle{H^1(G,\prod_{v\in S} \fq(\L,v)^\times) \simeq \bigoplus_{v\in S} H^1(G_v, \fq(\L,v)^\times)\cdot}$$
Let us fix  $v\in S$. Let $G_v=\Gal(\L_v/\K_v)$ be the  decomposition group in $\L/\K$ of a place of $\L$ above $v$; put  $I_v$ the inertia group  $I_v$ associated to $G_v$. The quotient $G_v/I_v$ corresponds to the Galois group of the local  maximal unramified extension   $\L_v^{nr}/\K_v$ in  $\L_v/\K_v$.  The quotient  $G_v/I_v$ is isomorphic to   the Galois group of the associated residual extensions; as $\fq(\L,v)$ is isomorphic to the residual field of  $\L_v^{nr}$, By Hilbert Theorem 90, one has:  $H^1(G_v/I_v, \fq(\L,v)^\times)=0$.
Hence the exact sequence $1\longrightarrow I_v \longrightarrow G_v \longrightarrow G_v/I_v \longrightarrow 1$ induces 
$$1 \longrightarrow H^1(G_v,\fq(\L,v)^\times) \longrightarrow H^1(I_v,\fq(\L,v)^\times)^{G_v/I_v} \longrightarrow \cdots $$
The finite group $I_v$ acts trivially on  the cyclic group $\fq(\L,v)^\times$, then $H^1(I_v,\fq(\L,v)^\times)$ is cyclic because the ramification of  $v$ is tame.
To resume,  $$\displaystyle{H^1(G,\prod_{v\in S} \fq(\L,v)^\times)  \simeq \bigoplus_{v\in S_{ram}(\K)} C_v } ,$$
where $S_{ram}(\K)$ is the set of places of  $S(\K)$ that are ramified in $\L/\K$ and where $C_v$ is a cyclic group with $C_v:=\ker\big(H^1(I_v,\fq(\L,v)^\times) \rightarrow H^2(G_v/I_v,\fq(\L,v)^\times)\big)$.
 
 For  $v_0 \in S(\k)$, the group $\Delta$ acts transitively on $\displaystyle{\prod_{v|v_0} C_v}$, where the produce is on the  places $v$ of $\K$ above $v_0$. Then  $$\r{H^1(G,\prod_{v\in S} \fq(\L,v)^\times)[p]} \leq \sum_{v_0 \in S_{ram(\k)}} \r{\prod_{v|v_0} C_v[p]} \leq   |S_{ram}|,$$
where here $S_{ram}=S_{ram}(\k)$ is the subset of places of  $S$ ($=S(\k)$) ramified in $\L/\K$. 
Finally \begin{eqnarray}\label{inegalite21}\r{\Im(\beta)} \leq |S_{ram}|.\end{eqnarray}

To control  $\ker(\beta)$, as before, one uses the  estimation of Lemma \ref{rgsemisimple} (vi) to obtain $\r{\ker(\beta)} \leq d_p \big({\rm coker} \varphi\big)^G$  and then:
\begin{eqnarray}\label{inegalite22}  d_p \big({\rm coker} \varphi\big)^G \leq d_p {\rm coker} \varphi \leq d_p \big(\prod_{v\in S(\L)} \fq(\L,v)^\times\big) \leq |S(\L)| \leq |S||\G|  \cdot \end{eqnarray}

\medskip

By using (\ref{inegalite11}), (\ref{inegalite20}), (\ref{inegalite21}), and (\ref{inegalite22}):$$\begin{array}{rcl} \displaystyle{\r{H^1(G,\O_\L^T)[p]} }  &\leq &  \displaystyle{\r{H^1(G,B)[p]}+d_p \Cl_S^T(\K)} \\ 
&\leq &  |S||\G|  +|S_{ram}|+ d_p \Cl_S^T(\K) ,\end{array}$$
where we put  $S=S(\k)$ and $S_{ram}=S_{ram}(\k)$.
Now it suffices to use Dirichlet's unit Theorem \ref{Dirichlet} to obtain:
$$\r{\E^G}\leq \r{H^1(G,\O_\L^T)[p]} + \r{\fq_p\otimes \O_\K^T} \leq |\G| |S|+|S_{ram}|+d_p\Cl_S^T(\K) + r_1 + r_2 -1 + |T|\cdot$$
We finish the proof of Theorem \ref{liber} thanks to Proposition \ref{minoration-rang} and to the fact that $$d_p \E= |\G| \big(\frac{1}{2} r_\infty+ d_\infty+|T|\big)-1 \cdot$$

\subsubsection{When $\O_\L^T$ contains $\mu_p$}
As for the previous section, let us write the $\fq_p[\G]$-module $\E:= \fq_p \otimes \O_\L^T$ as  $\fq_p[\G]^t\oplus N$, where $N$ is of  torsion. 

\begin{theo} \label{liberté-inégalité-bis}
Let us consider the same context as for Theorem \ref{liber}, with the exception that $\L$ contains $\mu_p$. Let us decompose the $\fq_p[\G]$-module $\E:= \fq_p \otimes \O_\L^T$ as $\fq_p[\G]^t\oplus N$, where $N$ is of  torsion. Then
 $$ t \geq |T| + d_\infty +\frac{1}{2}r_\infty -(|\G|-1)\big(r_1+r_2+1-d_\infty-\frac{1}{2} r_\infty +d_p \Cl_S^T(\K)+|\G| |S|+|\G| |S_{ram}|+ d_p H^2(G,\fq_p) \big)  \cdot$$
\end{theo}

\begin{rema} \label{remaA1} When $\mu_p \subset \O_\L^T$, put $$\A= -\big[d_\infty +\frac{1}{2}r_\infty -(|\G|-1)\big(r_1+r_2+1-d_\infty-\frac{1}{2} r_\infty +d_p \Cl_S(\K)+|\G| |S|+|\G| |S_{ram}|+ d_p H^2(G,\fq_p) \big) \big].$$
{As before, we note that $d_p \Cl_S^T(\K) \leq d_p \Cl_S(\K)$.}
\end{rema}

Let us start with the following sequence of   $G$-modules  \begin{eqnarray}\label{suite-exacte2} 1 \longrightarrow \O_\L^T/ \mu_p \stackrel{p}{\longrightarrow} \O_\L^T \longrightarrow \E \longrightarrow 1  \ \cdot \end{eqnarray}

Then (\ref{suite-exacte2}) becomes
\begin{eqnarray} \label{2nde-suite-exacte-fonda}  \fq_p\otimes \O_\K^T \hookrightarrow \E^G \longrightarrow H^1(G,\O_K^T/\mu_p) \longrightarrow \cdots   \end{eqnarray}

Consider $$1 \longrightarrow  \mu_p \longrightarrow \O_\L^T {\longrightarrow} \O_\L^T / \mu_p \longrightarrow 1  $$
which gives
$$ \cdots \longrightarrow H^1(G,\mu_p) \longrightarrow H^1(G,\O_\L^T)  \longrightarrow H^1(G, \O_\L^T/\mu_p) \longrightarrow H^2(G,\mu_p) \longrightarrow H^2(G,\O_\L^T) \longrightarrow \cdots $$
Let us remark here that the $p$-group $G$ acts trivially on $\mu_p$. Thus  for $i=1,2$, the groups $H^i(G,\mu_p)$ describe generators and relations of $G$.

One then has $$ d_p H^1(G,\O_\L^T/\mu_p) \leq d_p H^2(G,\fq_p)+ d_p H^1(G,\O_{\L,S}^T)\cdot$$
and $$\begin{array}{rcl} \r{\E^G} &\leq& \r{\fq_p\otimes \O_\K^T}+ d_p H^1(G,\O_\L^T/\mu_p) \\
& \leq & \r{\fq_p\otimes \O_K^T} + d_p H^1(G,\O_\L^T)+d_p H^2(G,\fq_p) \\
& \leq & \r{\fq_p\otimes \O_K^T} + |\G||S|+|\G| |S_{ram}| + d_p \Cl_S^T(\K)+d_pH^2(G,\fq_p)
\end{array} $$
where for the last inequality, one takes the previous computation concerning $H^1(G,\O_\L^T)$ to obtain an upper bound  for $d_p H^1(G,\O_\L^T)$. 
The conclusion may be deduced from Proposition \ref{minoration-rang}.

\section{Ramification with prescribed Galois action}

\subsection{Preparation} \label{rappel-kummer} 

\subsubsection{Kummer Theory} Our reference here is the book of Gras \cite{gras}, \S 6, chapter I.

 Let us start with a Galois extension $\L/\k$ of Galois group $\G$ and recall some notations.

\begin{itemize}
\item Denote by $\chi_p=\fq_p(1)$ the  cyclotomic character resulting from the action on the $p$th roots of unity.  For a $\fq_p[\G]$-module $M$, put $M(1)=M\otimes_{\fq_p} \fq_p(1)$.
\item
 Let  $T$ be a finite set of primes of $\O_\k$ \emph{all of which split completely in} $\L$, and consider $$\V^T=\{ \alpha \in \L^\times, \ v_\P(\alpha)\equiv 0 \ (\bmod p), \forall \O_L\text{-primes } \P|\p \notin  T\}\cdot$$ 
\item  
Consider now the governing field $\F^T:=\L'(\sqrt[p]{\V^T})$, where $\L'=\L(\zeta_p)$. The Kummer extension  $\F^T/\L'$ is unramified outside  $T\cup S_p(\L')$.
\item We also define analogous objects over $\k$, namely: $$\V_\k^T=\{ \alpha \in \\k^\times, \ v_\P(\alpha)\equiv 0 \ (\bmod p),  \forall \O_L\text{-primes } \P|\p  \notin  T\}\},$$ and  the governing field  $\F_\k^T:=\k(\zeta_p,\sqrt[p]{\V_\k^T})$.
 \end{itemize}
 
 \begin{rema} One easily see that  $\fq_p\otimes \O_\L^T \hookrightarrow  \V^T/(\L^\times)^p$. {We will be interested in finding some }free sub-$\fq_p[\G]$-modules of $\V^T/(\L^\times)^p$: they will appear thanks to control over the group of $T$-units   $\fq_p\otimes \O_\L^T$ in conjunction with Theorem \ref{constante-liberte}.
\end{rema}

Put $\H=\Gal(\F^T/\L')$; the group $\G$ acts on $\V^T/(\L^*)^p$ and then on  $\H$.
Recall that the bilinear form $$\begin{array}{rcl} b : \V^T/(\L^*)^p \times \H & \longrightarrow & \mu_p \\
(x,h) &\mapsto & \sqrt[p]{x}^{h-1}
\end{array}$$ is non-degenerate and functorial with respect to the action of $\G$ : $$b_\varepsilon(g(x),h)=b_\varepsilon(x,g^{-1}(h)^{\chi_p(g)}).$$
This bilinear form induces an isomorphism of    $\G$-modules~: 
\begin{eqnarray}\label{iso} \Theta :  \big(\V^T/(\L^\times)^p\big)^*(1) \stackrel{\approxeq}{\longrightarrow} \H  \ \cdot\end{eqnarray}

\begin{rema} Put $M= \V^T/(\L^\times)^p$.
The bilinear form induces an isomorphism between    $\H$ and $\Hom(M,\mu_p)$, the latter  being isomorphic to  $\Hom(M,\fq_p)\otimes \mu_p = M^*(1)$. 
\end{rema}

\begin{prop}\label{liberté}
If the  $\fq_p[\G]$-module $\fq_p\otimes \O_\L^T$ contains  a free submodule $\langle  \varepsilon \rangle_\G$ generated by the unit $\varepsilon$, then $\H=\Gal(\F^T/\L')$ contains, as a direct factor, a free sub-$\fq_p[\G]$-module  $\H_{\varepsilon}$ of rank $1$, isomorphic to  $\big(\langle \varepsilon \rangle_\G\big) ^*(1)$, the latter being isomorphic to  $\H_\varepsilon:=\Gal(\L'(\sqrt[p]{\langle\varepsilon \rangle})/\L')$.
\end{prop}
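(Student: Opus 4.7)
The plan is to combine two structural ingredients already established in the paper: the Kummer duality (\ref{iso}) and the Frobenius (self-injectivity) property of $\fq_p[\G]$ recalled in \S\ref{section5.1.3}. First, I would note that the natural inclusion $\fq_p\otimes\O_\L^T \hookrightarrow \V^T/(\L^\times)^p$ is $\fq_p[\G]$-linear, so the hypothesis produces a free rank-one $\fq_p[\G]$-submodule $\langle\varepsilon\rangle_\G \simeq \fq_p[\G]$ sitting inside $\V^T/(\L^\times)^p$.

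Because $\fq_p[\G]$ is Frobenius, every embedding of a free submodule into a $\fq_p[\G]$-module splits. Hence we obtain a $\fq_p[\G]$-decomposition
$$\V^T/(\L^\times)^p \;=\; \langle\varepsilon\rangle_\G \,\oplus\, M'$$
for some complementary submodule $M'$. Applying the $\fq_p[\G]$-equivariant Kummer duality $\Theta$ from (\ref{iso}) and using that the linear dual $(\,\cdot\,)^*$ and the cyclotomic twist $(1)$ commute with direct sums, this gives
$$\H \;\simeq\; \bigl(\langle\varepsilon\rangle_\G\bigr)^*(1) \;\oplus\; (M')^*(1)$$
as $\fq_p[\G]$-modules. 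Under the Kummer correspondence, the first summand is precisely $\Gal(\L'(\sqrt[p]{\langle\varepsilon\rangle_\G})/\L') = \H_\varepsilon$, and the second is $\Gal(\L'(\sqrt[p]{M'})/\L')$. This already exhibits $\H_\varepsilon$ as a direct factor of $\H$ with $\H_\varepsilon \simeq \bigl(\langle\varepsilon\rangle_\G\bigr)^*(1)$.

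To finish, I must verify that $\bigl(\langle\varepsilon\rangle_\G\bigr)^*(1) \simeq (\fq_p[\G])^*(1)$ is itself free of rank one. The dual $(\fq_p[\G])^*$ is isomorphic to $\fq_p[\G]$ by the Frobenius property. Tensoring a rank-one free $\fq_p[\G]$-module by the one-dimensional $\fq_p$-character $\chi_p$ yields a cyclic $\fq_p[\G]$-module of total $\fq_p$-dimension $|\G|$, which is therefore again free of rank one.

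The one subtle point to handle carefully is $\G$-equivariance through the combined operation of dualization and cyclotomic twist: one must invoke the functoriality relation $b(g(x),h) = b(x, g^{-1}(h)^{\chi_p(g)})$ stated just before (\ref{iso}), which is precisely what forces the $(1)$ to appear and makes $\Theta$ an isomorphism of $\fq_p[\G]$-modules rather than merely of $\fq_p[\H]$-modules. Beyond this, the argument is formal once Frobenius self-injectivity is in hand.
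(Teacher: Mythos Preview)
Your proposal is correct and follows essentially the same approach as the paper's proof: use the Frobenius property of $\fq_p[\G]$ (from \S\ref{section5.1.3}) to split off $\langle\varepsilon\rangle_\G$ as a direct factor of $\V^T/(\L^\times)^p$, then apply the Kummer isomorphism $\Theta$ to obtain the corresponding direct factor $(\langle\varepsilon\rangle_\G)^*(1)\simeq\H_\varepsilon$ inside $\H$. Your write-up is simply more detailed than the paper's three-line argument, making explicit the freeness of the twisted dual and the $\G$-equivariance of $\Theta$.
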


\begin{proof}  As $\langle \varepsilon \rangle_\G$ is free, it is a direct factor in  $\V^T/(\L^\times)^p$.
By passing to the dual, the module $\big(\langle\varepsilon \rangle_\G\big)^*(1)$ is free and is in direct factor in  $\big(\V^T/(\L^\times)^p\big)^*(1) \stackrel{\Theta}{\Mapsto} \H=\Gal(\F^T/\L')$.
Finally by Kummer theory, $\big(\langle \varepsilon \rangle_\G\big)^*(1) \simeq \H_\varepsilon$. 
\end{proof}

\begin{defi}
 Under the hypothesis of Proposition \ref{liberté},
denote by $x_\varepsilon$ a generator of the free module $\H_\varepsilon$.
 \end{defi}

\subsubsection{The Theorem of Gras-Munnier}

 \begin{defi} Let $\K$ be a number field and $S$ a finite set of prime ideals of $\O_\K$. 
 {We say the extension $\L/\K$ is $S$-ramified if it is unramified outside $S$ and $S$-totally ramified if it is $S$-ramified and moreover all primes in $S$ are totally ramified in  $\L/\K$.}

\end{defi}

Let us conserve the notation introduced in the beginning of this  section \ref{rappel-kummer}: $\L'=\L(\zeta_p)$ and $\F^T=\L'(\sqrt[p]{\V^T})$.
Let us recall the   Theorem of  Gras-Munnier (see \cite{Gras-Munnier}, \cite{gras})
that will be extremely useful to us.

\begin{theo}[Gras-Munnier \cite{Gras-Munnier}]\label{gras-munnier}
Let $S=\{\p_1,\cdots, \p_m\}$ and $T$ be two finite sets of prime ideals of $\O_\L$, such that $S\cap T=\emptyset$,
and such that for all $\p_i \in S$, $\N \p_i \equiv 1 (\bmod p)$. 
For each $i=1,\cdots,m$, let  $\P_i$ be a prime of $\O_{\L'}$ above  $\p_i$.
Then, there exists a $T$-split, $S$-totally ramified cyclic extension $\F/\L$
of degree  $p$ if and only if, for $i=1,\cdots, m$, there exists $a_i \in \fq_p^\times$, such that
$$\prod_{i=1}^m \FF{\F^T/\L'}{\P_i}^{a_i} =1 \ \in \Gal(\F^T/\L'),$$
where  $\FF{\F^T/\L'}{ \bullet }$ is the Artin symbol in the extension $\F^T/\L'$.
\end{theo}

Note that the condition does not depend on the choice of the primes $\P_i$
above $\p_i$ (which merely causes a shift in the exponents $a_i$).

\subsubsection{Chebotarev density Theorem and applications}\label{chebotarev-consequences}

The Chebotarev density Theorem allows us to give a relationship between the Theorem of Gras-Munnier and the section about Kummer Theory.  
We continue to conserve the notations and the context of section   \ref{rappel-kummer}.

\begin{defi} 
Let $U$, $S$ and $T$ be three pairwise disjoint sets of  prime ideals of  $\O_\L$. Put $\Sigma=S\cup U$ and  assume that $\Sigma$ is {tame, i.e.\ $(\Sigma,p)=1$}. Denote by  $\I_S^T(U,\L)$ the subgroup of $\GG_\Sigma^T(\L)/\Phi(\GG_\Sigma^T(\L))$ generated by the inertia groups of the  prime ideals  of   $U$.
\end{defi}

\begin{lemm} \label{remarque-ramification0}With notation as above, the following conditions are equivalent.
\begin{itemize}
\item $\I_S^T(U,\L)=\{1\}$
\item Every $T$-split $\Sigma$-ramified cyclic degree $p$ extension of $\L$ is $S$-ramified
\item For every non-empty subset $U'$ of $U$, there does not exist a cyclic degree $p$ $T$-split $U'\cup S$-ramified  extension of $\L$ where all primes of  $U'$ are totally ramified.
\end{itemize}
\end{lemm}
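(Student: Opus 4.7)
The plan is to work within the Galois-theoretic dictionary between the Frattini quotient $V := \GG_\Sigma^T(\L)/\Phi(\GG_\Sigma^T(\L))$ and the $T$-split, $\Sigma$-ramified elementary abelian $p$-extensions of $\L$. The starting observation I would record is that non-zero characters $\chi : V \to \fq_p$ are in bijection with $T$-split $\Sigma$-ramified cyclic degree $p$ extensions $\F_\chi/\L$, and that for each tame prime $\p \in \Sigma$ the image of the inertia subgroup $I_\p$ in $V$ is cyclic of order dividing $p$. Hence for each $\p \in \Sigma$ one has the clean dichotomy: either $\chi(I_\p) = 0$ (so $\p$ is unramified in $\F_\chi/\L$) or $\chi(I_\p) = \fq_p$ (so $\p$ is totally ramified in $\F_\chi/\L$). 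This dichotomy is what makes all three conditions express the same information.

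For the equivalence of the first two, I would argue: $\I_S^T(U,\L) = \{1\}$ iff every inertia image at a prime of $U$ vanishes in $V$, iff every character $\chi$ is trivial on all $I_\p$ with $\p \in U$, iff every $T$-split $\Sigma$-ramified cyclic degree $p$ extension is unramified at every prime of $U$, i.e., is $S$-ramified. The implication from the second to the third condition is then immediate by contradiction: a $T$-split $U'\cup S$-ramified cyclic degree $p$ extension totally ramified at $U' \neq \emptyset$ is in particular $\Sigma$-ramified, hence by the second condition $S$-ramified, forcing $U' = \emptyset$.

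For the remaining implication from the third condition back to the first, I would prove the contrapositive. Suppose $\I_S^T(U,\L) \neq \{1\}$; then some $\p_0 \in U$ has non-zero inertia image in $V$, so there exists $\chi : V \to \fq_p$ with $\chi(I_{\p_0}) \neq 0$. Setting $U' := \{\p \in U : \chi(I_\p) \neq 0\}$, we have $\p_0 \in U'$, and by the dichotomy above the extension $\F_\chi/\L$ is $T$-split, unramified outside $U' \cup S$, and totally ramified at every prime of $U'$, producing an extension forbidden by the third condition. The only mildly delicate point is precisely this dichotomy "unramified or totally ramified at each $\p$", but it is a direct consequence of the running hypotheses $(\Sigma, p) = 1$ and $\#\O_\L/\p \equiv 1 \pmod p$ for $\p \in \Sigma$, which are built into the setup; there is no genuine obstacle.
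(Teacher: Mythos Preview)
Your proposal is correct. The paper's own proof consists of the single word ``Obvious.'' and gives no further argument, so there is nothing to compare approaches against; you have simply written out in full the Galois--Frattini dictionary that the authors take for granted, and your use of the tame dichotomy (unramified versus totally ramified at each $\p \in \Sigma$ in a degree-$p$ quotient) is exactly the mechanism that makes the equivalence immediate.
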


\begin{proof} Obvious.  \end{proof}

\medskip

\begin{coro} \label{corollaire-structure}
Suppose that the $\fq_p[\G]$-module $\Gal(\F^T/\L')$ contains a free submodule  $\H_\varepsilon=\langle x_\varepsilon \rangle_\G$ of rank  $1$. By Chebotarev density Theorem, choose   a  prime ideal $\p$ of $\O_\L$ such that  
$\langle \FF{\F^T/\L'}{\P}\rangle =\langle x_\varepsilon \rangle $, where $\P|\p$. Then for  $U=\{ g(\P)=\P^g, \ g \in \G\}$, we have $\I^T(U,\L)=\{1\}$.
\end{coro}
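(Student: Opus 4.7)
The plan is to combine Lemma \ref{remarque-ramification0} (applied with $S=\emptyset$) and the Gras--Munnier Theorem \ref{gras-munnier} to reduce the claim to a statement about $\fq_p$-linear independence of certain Frobenius elements inside $\H = \Gal(\F^T/\L')$, and then to invoke the rank-$1$ freeness of $\H_\varepsilon$.

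First, by the third equivalent condition in Lemma \ref{remarque-ramification0}, it suffices to show that for every non-empty subset $U' \subseteq U$, no $T$-split cyclic degree-$p$ extension of $\L$ totally ramified exactly at $U'$ exists. Writing $U = \{\P^g : g \in \G/D\}$ where $D$ is the decomposition subgroup of $\P$ in $\G$, and fixing compatible lifts $\widetilde{\P^g}$ of $\P^g$ to $\L'$, Theorem \ref{gras-munnier} recasts this non-existence as the statement that for every non-empty subset $U' = \{\P^{g_1},\ldots,\P^{g_k}\} \subseteq U$ and every tuple $(a_1,\ldots,a_k) \in (\fq_p^\times)^k$, the relation
\[
\sum_{i=1}^{k} a_i \cdot \FF{\F^T/\L'}{\widetilde{\P^{g_i}}} \;=\; 0 \quad \text{in } \H
\]
fails.

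The next step is to identify these Frobenius elements inside $\H_\varepsilon$. Since $\V^T$ is $\G$-stable by construction, $\F^T/\k$ is Galois, and the $\fq_p[\G]$-module structure on $\H$ appearing in Proposition \ref{liberté} (coming from the Kummer pairing (\ref{iso})) matches the functorial action under which the Frobenius symbol transforms covariantly:
\[
\FF{\F^T/\L'}{\widetilde{\P^{g}}} \;=\; g \cdot \FF{\F^T/\L'}{\widetilde{\P}}.
\]
Combined with the Chebotarev hypothesis $\FF{\F^T/\L'}{\widetilde\P} = c\, x_\varepsilon$ for some $c \in \fq_p^\times$, this gives $\FF{\F^T/\L'}{\widetilde{\P^{g_i}}} = c\,(g_i \cdot x_\varepsilon)$ for every $i$.

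The conclusion is then immediate: the isomorphism $\H_\varepsilon \simeq \fq_p[\G]$ makes $\{g \cdot x_\varepsilon : g \in \G\}$ an $\fq_p$-basis of $\H_\varepsilon$, so in particular the elements $g_i \cdot x_\varepsilon$ (for distinct cosets $g_i \in \G/D$) are $\fq_p$-linearly independent. Hence no non-trivial combination with coefficients in $\fq_p^\times$ can vanish, which rules out any Gras--Munnier relation at $U'$ and proves $\I^T(U,\L) = \{1\}$. The main point requiring care is the Frobenius covariance: it is a standard compatibility between Kummer duality and class field theory, but one must keep track of the cyclotomic twist present in (\ref{iso}) to ensure that the resulting $\fq_p^\times$-scalar ambiguities do not interfere with the linear-independence conclusion.
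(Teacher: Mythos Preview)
Your proof is correct and follows essentially the same route as the paper: reduce via Lemma \ref{remarque-ramification0} (with $S=\emptyset$) to the non-existence of a Gras--Munnier relation, then use the Frobenius covariance $\FF{\F^T/\L'}{\P^g}=g\cdot\FF{\F^T/\L'}{\P}$ together with the $\fq_p[\G]$-freeness of $\H_\varepsilon$ to rule out any non-trivial $\fq_p$-linear relation among the conjugates. Your extra bookkeeping with the decomposition group $D$ and the cyclotomic twist is harmless but unnecessary: the freeness hypothesis already forces $D=\{e\}$ (since $d\cdot x_\varepsilon=x_\varepsilon$ implies $d=e$), and the covariance is just the ordinary conjugation action on $\Gal(\F^T/\L')$, independent of the Kummer isomorphism $\Theta$.
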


\begin{proof}
Let first recall the property of the Artin symbol: for $g\in \G$ and $\P \subset \O_\L$, one has:
$$\displaystyle{\FF{\F^T/\L'}{\P^g}=g\FF{\F^T/\L'}{\P}g^{-1}=\FF{\F^T/\L'}{\P}^{g^{-1}}}\cdot $$ {By hypothesis there does not exist a non-trivial relation}  between the conjugates of  $\displaystyle{\FF{\F^T/\L'}{\P}^g}$ with $g\in \G$. Then by   Theorem \ref{gras-munnier}, there exists no $T$-split, $U'$-totally ramified, cyclic degree $p$ extension of $\K$,  for every non empty set $U'$ of $S$, meaning that  $\I^T(U,\L)$ is trivial (thanks to  Lemma \ref{remarque-ramification0} with $S=\emptyset$).
\end{proof}

In fact, we want to say more.  For a finite set  $S$, $\G$-stable, of tame ideal primes of  $\O_\L$ with $S\cap T=\emptyset$, denote by  $\F(S)$ the subgroup of  $\Gal(\F^T/\L')$ generated by the  Frobenius of the ideals of $S$ (with an abuse of notation); here the primes in $S$ are unramified in $\F^T/\L'$. 

\begin{coro}\label{coro-evitement} Suppose that the  $\fq_p[\G]$-module $\Gal(\F^T/\L')$ contains a free submodule  $\H_\varepsilon=\langle x_\varepsilon \rangle_\G$ of rank $1$  such that $${\H_\varepsilon} \  \bigcap \ \F(S)= \{0\}\cdot$$ By Chebotarev density Theorem, choose a prime ideal   $\p$ of $\O_\L$  such that   
$\langle \FF{\F^T/\L'}{\P} \rangle =\langle x_\varepsilon \rangle$, for any $\P|\p$. Put $U=\{ g(\P)=\P^g, \ g \in \G\}$. Then  $\I_S^T(U,\L)=\{1\}$.
\end{coro}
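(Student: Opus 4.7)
The plan is to extend the proof of Corollary \ref{corollaire-structure}, using the new disjointness hypothesis $\H_\varepsilon \cap \F(S) = \{0\}$ to absorb the possible ramification at primes of $S$. By Lemma \ref{remarque-ramification0} it suffices to rule out, for every non-empty subset $U' \subseteq U$, the existence of a $T$-split cyclic degree-$p$ extension $M/\L$ ramified only in $U' \cup S$ and totally ramified at each prime of $U'$. Since $U$ consists of the $\G$-conjugates of a single prime $\P$, I would write $U' = \{\P^g : g \in \G'\}$ for a non-empty $\G' \subseteq \G$, and note that each $\q \in S$ is unramified in $\F^T/\L'$ (being tame and outside $T \cup S_p(\L')$), so its Frobenius in $\Gal(\F^T/\L')$ is well-defined.

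The next step is to translate the existence of such an $M/\L$ into a relation in $\Gal(\F^T/\L')$ via Theorem \ref{gras-munnier}. Applied to the ramification set $U' \cup S_0$ for each subset $S_0 \subseteq S$ and then assembled, the Gras--Munnier criterion yields coefficients $a_g \in \fq_p^\times$ (for $g \in \G'$) and $b_\q \in \fq_p$ (for $\q \in S$, allowed to vanish on those primes unramified in $M$) such that, in additive notation,
\begin{equation*}
\sum_{g \in \G'} a_g \,\FF{\F^T/\L'}{\P^g} \;+\; \sum_{\q \in S} b_\q \,\FF{\F^T/\L'}{\Q} \;=\; 0 \quad \text{in } \Gal(\F^T/\L').
\end{equation*}
By the Chebotarev choice of $\p$ and the $\G$-functoriality of the Artin symbol, each $\FF{\F^T/\L'}{\P^g}$ lies in $\H_\varepsilon = \fq_p[\G]\cdot x_\varepsilon$; so the left-hand sum $y$ lies in $\H_\varepsilon$, while the right-hand sum $z$ lies in $\F(S)$ by definition. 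The relation reads $y = -z \in \H_\varepsilon \cap \F(S) = \{0\}$, so $y = 0$.

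Finally, because $\H_\varepsilon$ is free of rank one over $\fq_p[\G]$, the vanishing $y = \bigl(\sum_{g \in \G'} a_g g^{-1}\bigr)\cdot x_\varepsilon = 0$ forces $\sum_{g \in \G'} a_g g^{-1} = 0$ in $\fq_p[\G]$, which is impossible since $\G' \neq \emptyset$ and $a_g \in \fq_p^\times$. This contradiction establishes $\I_S^T(U,\L) = \{1\}$.

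The only delicate point is the Gras--Munnier step in this mixed form, where ramification at $U'$ is mandatory and total, while ramification at $S$ is optional. The cleanest way to handle it is to observe that the theorem applied separately to each $U' \cup S_0$ ($S_0 \subseteq S$) produces exactly the same kind of relation with the $b_\q$'s restricted to $S_0$; extending them by zero gives a uniform statement over all choices of $S_0$, so the relation displayed above really is equivalent to the existence of \emph{some} such $M/\L$. Everything else is a direct transcription of the argument used in Corollary \ref{corollaire-structure}.
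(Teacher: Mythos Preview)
Your proof is correct and follows essentially the same line as the paper's: both invoke Gras--Munnier to produce a relation among Frobenius elements, then use the freeness of $\H_\varepsilon$ together with $\H_\varepsilon \cap \F(S)=\{0\}$ to force the $U$-part of the relation to vanish, yielding a contradiction. The only cosmetic difference is that you work with the third equivalent condition of Lemma~\ref{remarque-ramification0} (no $U'$-totally ramified extension) and spell out the ``optional ramification at $S$'' step explicitly, whereas the paper uses the second condition (every $\Sigma$-ramified extension is $S$-ramified) and leaves that step implicit.
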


\begin{proof}  Let $\L_0/\L$ be a $T$-split, $S\cup U$-ramified, degree $p$ cyclic  extension of $\L$.
As the  free $\fq_p[\G]$-module  $\langle \displaystyle{\FF{\F^T/\L'}{\P}}\rangle_\G $ intersects trivially  $\F(S)$, one has thanks to Theorem  \ref{gras-munnier} that the extension $\L_0/\L$ is unramified at  $U$. By Lemma \ref{remarque-ramification0},  one concludes that $\I^T_S(U,\L)=\{1\}$. 
\end{proof}

\subsection{The set $\SS$}

 \label{kummer0}

We are now going  to give a non free situation that will be used  in the proof of Theorem  \ref{maintheorem}. It is essential for the definition of the sets $\SS$.

Let us start from the existence of a free submodule $\fq_p[\G]^{|\G|}$ of $\V^T/(\L^\times)^p$, of rank  $|\G|$. Let  $(\varepsilon_g)_g$ be a basis of $\fq_p[\G]^{|\G|}$ indexed by the elements of  $\G$.

As $\fq_p[\G]$ is a Frobenius ring, the free module $\displaystyle{\bigoplus_{g\in \G} \fq_p[\G] \varepsilon_g}$ is a direct factor in $\V_\L^T/(\L^\times)^p $; put then  $$\V_\L^T/(\L^\times)^p = \bigoplus_{g\in\G}\fq_p[\G]  \varepsilon_g \oplus W,$$as the  sum of  $\G$-modules.

\medskip

 Let $\displaystyle{N=\sum_{h\in \G} h}$  be the algebraic norm.  Let us mention an easy lemma:
 
 \begin{lemm}\label{calcul-trivial} The module
  $\fq_p N$ is a sub-$\fq_p[\G]$-module of $\fq_p[\G]$ generated by $N$. In other words, $\langle N \rangle_\G = \langle N \rangle$.
 It is also the only sub-$\G$-module  of $\fq_p[\G]$ on which  $\G$ acts trivially.
 \end{lemm}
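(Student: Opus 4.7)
The plan is to verify both assertions by a direct computation with the group algebra, since $N = \sum_{h \in \G} h$ is a very explicit element and the claims are essentially about its stabilizer properties.

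First I would prove the identity $\langle N \rangle_\G = \langle N \rangle$. The key observation is that for any $g \in \G$, left multiplication gives $g \cdot N = \sum_{h \in \G} gh = \sum_{h' \in \G} h' = N$, since $h \mapsto gh$ is a bijection of $\G$. Thus $\G$ fixes $N$, and so $\fq_p[\G] \cdot N = \fq_p \cdot N$, which is precisely the first claim.

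For the uniqueness statement, I would take an arbitrary sub-$\G$-module $M \subset \fq_p[\G]$ on which $\G$ acts trivially and show $M \subset \fq_p N$. Writing any $x \in M$ as $x = \sum_{h \in \G} x_h\, h$ with $x_h \in \fq_p$, the condition $g \cdot x = x$ for every $g \in \G$ translates (after the change of summation index $h \mapsto g^{-1}h$) into $x_{g^{-1}h} = x_h$ for all $g, h$. Hence all coefficients $x_h$ coincide with $x_e$, which forces $x = x_e \cdot N \in \fq_p N$. Combined with the first part, this gives $M = \fq_p N$ (unless $M = 0$, but the statement implicitly refers to the unique nontrivial such submodule, which is $\fq_p N$ itself).

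There is no real obstacle here: the lemma is a standard fact about the trivial isotypic component $(\fq_p[\G])^\G$ of the regular representation, and both parts reduce to the bijectivity of left translation on $\G$. The only thing to be careful about is the orientation of the $\G$-action (left versus right multiplication) when performing the index substitution, but this is routine.
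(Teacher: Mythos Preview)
Your proof is correct and follows essentially the same approach as the paper: the paper computes $\lambda\cdot N$ for a general $\lambda=\sum_{g\in\G}a_g g$ and observes it lands in $\fq_p N$, while you check $g\cdot N=N$ for each $g\in\G$ and deduce the same; for the second part both you and the paper note that $g\cdot x=x$ forces all coefficients of $x$ to be equal. The only cosmetic difference is that the paper handles the zero submodule implicitly (it too sits inside $\fq_p N$), so your parenthetical caveat is unnecessary.
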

 
 \begin{proof}
 Put $\displaystyle{\sum_{g\in \G} a_g g \in \fq_p[\G]}$, $a_g \in \fq_p$. Then
 $$\sum_{g\in \G} a_g g\big(\sum_{h\in \G} h \big)= \sum_{g \in \G} a_g \sum_{h \in \G} gh=\sum_{g\in \G} a_g N \in \fq_p N , $$
which proves the first part. 
Now clearly   $\G$ acts trivially on $N$  and moreover if we start with an  element $\displaystyle{\sum_{g\in \G} a_g g }$ on which  $\G$ acts trivially, then obviously,  $a_g$ is constant (not depending on $g \in \G$).
 \end{proof}
 
 \medskip
 
Take   $\varepsilon_0 \in \V_\k^T(\L^\times)^p/(\L^\times)^p$ and write $\displaystyle{\varepsilon_0=\big(\sum_{g\in \G} y_g\big) +z}$, with $y_g\in \fq_p[\G] \varepsilon_g$ and $z\in W$.  As  $\G$ acts trivially  on $\varepsilon_0$, then  $\G$ acts trivially on the elements $y_g$ and Lemma \ref{calcul-trivial} shows that  $y_g \in  \fq_p N \cdot \varepsilon_g$. Denote by abuse, $\langle N \rangle :=\fq_p N \cdot \varepsilon_g$.

The morphism of $\fq_p[\G]$-modules $$\V_\L^T/(\L^\times)^p\twoheadrightarrow \bigoplus_{g\in \G} \big( \fq_p[\G]  \varepsilon_g / \langle N \rangle \big) $$ factors through  $\V_\k^T(\L^\times)^p/(\L^\times)^p$. Passing to the dual, on obtains: 
$$  \big(\bigoplus_{g\in \G} \fq_p[\G] \varepsilon_g / \langle N \rangle \big)^*(1) \hookrightarrow \big(\V_\L^T/(\L^\times)^p\V_\k^T\big)^*(1) $$ where   $$\big(\V_\L^T/(\L^\times)^p\V_\k^T\big)^*(1) = \ker\big[ \big(\V_\L^T/(\L^\times)^p\big)^*(1) \twoheadrightarrow \big(\V_\k^T(\L^\times)^p/(\L^\times)^p\big)^*(1)\big]\cdot$$

By passing to Kummer theory and by using  the isomorphism $\Theta$ of (\ref{iso}), we get:
$$\xymatrix{ 0 \ar[r] & \big(\V_\L^T/(\L^\times)^p\V_k^T\big)^*(1)\ar[d]^\approxeq  \ar[r] &  \big(\V_\L^T/(\L^\times)^p\big)^*(1) \ar[d]^\approxeq_\Theta \ar[r] & 
 \big(\V_\k^T (\L^\times)^p/(\L^\times)^p\big)^*(1) \ar[d]^\approxeq \ar[r] & 0 \\
  0 \ar[r] & \Gal(\F^T/\F_\k^T\L') \ar[r] & \Gal(\F^T/\L') \ar[r] & 
 \Gal(\F_\k^T\L'/\L') \ar[r] & 0 
 }$$ 
Put   \begin{eqnarray}\label{sous-espace} \displaystyle{\H':= \Theta\big( \big(\bigoplus_{g\in \G} \fq_p[\G] \varepsilon_g / \langle N \rangle \big)^*(1)  \big) };\end{eqnarray} then  $\H' \subset \Gal(\F^T/\F_\k^T\L')$.

\medskip

Let us study more carefully  $\H'$. First, 
  by Kummer duality, one has $$\bigoplus_{g\in \G}  \big( \fq_p[\G] \varepsilon_g / \langle \N \rangle  \big)^* \hookrightarrow \bigoplus_{g\in \G} \big(\fq_p[\G]\varepsilon_g\big)^* \twoheadrightarrow \bigoplus_{g\in \G}\langle N\rangle ^*\cdot$$
We will continue to denote  by  $(\varepsilon_g)_g$ the dual basis of  $\varepsilon_g$.

  \
  
 Let us fix an element $\varepsilon_g$. 
Then $\big(\fq_p[\G]/\langle N \rangle \big)^* \simeq  \{f \in \Hom(\fq_p[\G],\fq_p), \ f(N)=0\}$, see for example \cite{CR}, \S 60, chapter IX. Let  $$\I=\ker \big( \fq_p[\G] \rightarrow \fq_p\big)$$  be the augmentation  ideal of the algebra $\fq_p[\G]$. 
 Obviously,  \emph{via}  the isomorphism between $\fq_p[\G]^*$ and $\fq_p[\G]$, one has $\I \subset  \{f \in \Hom(\fq_p[\G],\fq_p), \ f(N)=0\}$; these two  $\fq_p$-spaces vector have the  same dimension, \emph{i.e.} $|\G|-1$, and then finally   $\I= \{f \in \Hom(\fq_p[\G],\fq_p), \ f(N)=0\}$. The exact sequences  
 $$ 1\longrightarrow \langle N \rangle \longrightarrow \fq_p[\G] \longrightarrow \fq_p[\G]/\langle N \rangle  \longrightarrow 1$$
and  $$ 1\longrightarrow \I \longrightarrow \fq_p[\G] \longrightarrow \fq_p \longrightarrow 1$$
are dual to each other, and the same holds after tensoring by $\mu_p$. 

\medskip

Put  $x_g=\varepsilon_g \otimes \zeta_p$: it is a generator of the free module $\big(\fq_p[\G]\varepsilon_g\big)(1)$.
 In the sum $\displaystyle{\bigoplus_{g\in \G} \I \cdot x_g  } \hookrightarrow \bigoplus_{g \in \G} \fq_p[\G] x_g$, let us choose the  particular element  $x$ defined by  \begin{eqnarray}\label{elementclef} x:= \big(\sum_{g\in \G} (g-1) x_g\big)\cdot \end{eqnarray}

Obviously the algebraic norm kills each component  $g-1$ of $x_g$ and then $N(x)=0$. In fact:

\begin{lemm}\label{relation-triviale}
The relation $N(x)=0$ is the unique non trivial relation  of $x$, \emph{i.e} if $\displaystyle{\sum_{h\in \G} a_h h \cdot x=0}$ then $a_{h}=a_e$ for all $h\in \G$. Equivalently,  ${\rm Ann}(x)=  \fq_p N $.
\end{lemm}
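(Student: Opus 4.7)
The plan is to exploit freeness of the ambient module. The element $x = \sum_{g\in\G}(g-1)x_g$ sits inside $M := \bigoplus_{g\in\G}\fq_p[\G]\cdot x_g$, which is a free $\fq_p[\G]$-module with basis $\{x_g : g\in \G\}$. First I would compute, for an arbitrary $\alpha = \sum_{h\in\G}a_h h \in \fq_p[\G]$, that
\[
\alpha\cdot x \;=\; \sum_{g\in\G}\alpha(g-1)\cdot x_g \;\in\; M.
\]
Since the $x_g$ form a basis of $M$, the vanishing $\alpha\cdot x = 0$ is equivalent to the system of identities $\alpha(g-1)=0$ in $\fq_p[\G]$ for every $g\in\G$. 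In other words, $\alpha$ must annihilate the augmentation ideal $\I$ of $\fq_p[\G]$.

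The next (and really the only substantive) step is to identify the right annihilator of $\I$ in $\fq_p[\G]$. I claim it equals $\fq_p\cdot N$. The inclusion $\fq_p N\subseteq\mathrm{Ann}_r(\I)$ is immediate, since $N(g-1)=Ng-N=N-N=0$ for all $g$. For the reverse, writing $\alpha=\sum_k c_k k$, the condition $\alpha g=\alpha$ for all $g\in\G$ rewrites, after the change of variable $k'=kg$, as $c_{k'g^{-1}}=c_{k'}$ for all $k',g\in\G$, which forces the coefficient function $k\mapsto c_k$ to be constant on $\G$, i.e.\ $\alpha=c N$ for some $c\in\fq_p$. (This is the same observation as Lemma \ref{calcul-trivial}, applied to the right regular action rather than the left.)

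Combining the two steps, $\alpha\cdot x=0$ forces $\alpha\in\fq_p N$, hence all $a_h$ are equal to a common scalar $a_e$; conversely $N\cdot x = \sum_g (Ng-N)x_g = 0$. This gives $\mathrm{Ann}(x)=\fq_p N$ and shows that $N(x)=0$ is, up to the scalar $\fq_p$-action, the unique relation satisfied by $x$. The only minor obstacle is bookkeeping in the $(1)$-twist: the twist only reinterprets the $\G$-action on the generators $x_g = \varepsilon_g\otimes\zeta_p$, but each $\fq_p[\G]\cdot x_g$ remains free of rank one, and the basis property used in the first step is unaffected.
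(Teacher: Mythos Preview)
Your proof is correct and follows essentially the same approach as the paper's: use freeness of $\bigoplus_g \fq_p[\G]x_g$ to reduce $\alpha\cdot x=0$ to $\alpha(g-1)=0$ for every $g$, then show that this forces all coefficients of $\alpha$ to coincide, i.e.\ $\alpha\in\fq_p N$. Your remark on the $(1)$-twist is also in line with the paper, which already records that each $(\fq_p[\G]\varepsilon_g)(1)$ is free with generator $x_g$; the one cosmetic point is that the annihilator you compute is the \emph{left} annihilator of $\I$ (elements $\alpha$ with $\alpha\I=0$), not the right one, but your subsequent computation $\alpha g=\alpha$ is exactly what is needed.
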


\begin{proof} 
Write  $\displaystyle{\lambda= \sum_{h\in \G} a_h h \in \fq_p[\G]}$ such that $\lambda \cdot x=0$. Then 
$$0 = \lambda x = \sum_{g\in \G}  \lambda (g-1) x_g \cdot $$
As the modules $\langle x_g \rangle $ are in direct factor, one has for every $g\in \G$, $\lambda  (g-1) x_g=0$. The modules $\langle x_g \rangle $ being moreover free, one gets  $\lambda (g-1)=0$. Thus  $\displaystyle{\lambda \in \bigcap_{g\in \G} {\rm Ann}(g-1)} \in \fq_p[\G]$. 
To conclude, it suffices to remark that the intersection is reduced to $(N)=\fq_p N$. 
Indeed, when  $g$ is fixed, we get $\displaystyle{\sum_{h\in \G} a_h h(g-1)=0}$ if and only if,  $a_{h^{-1}g}=a_g$ for all $h$. 
When varying $g$, one obtains  $a_{hg}=a_g$ for all $h$ and $g$, implying  $a_g=a_e$ for all $g\in \G$.
\end{proof}

\subsection{Some consequences}

Let us start now with  $x$ given by Definition (\ref{elementclef}). 

Recall that $\displaystyle{ x\in \bigoplus_{g\in \G}\I x_g}$, where $\I=\{f \in \Hom(\fq_p[\G],\fq_p), f(N)=0\}$. 

Put  $x_0= \Theta(x) \in \Gal(\F^T/\L')$, where $\Theta$ is the isomorphism coming from Kummer theory, see (\ref{iso}).
The element  $x_0 $ is in  $\H'$ and then $x_0 \in \Gal(\F^T/\F^T_k\L')$.

\medskip

By Chebotarev density Theorem, let us choose  a prime ideal $\P$ of $\O_\L$ which splits totally in  $\L/\k$ and such that $\langle \FF{\F^T/\L'}{\P}\rangle = \langle x_0\rangle $.

Let  $\p_\k={\rm N}(\p)={\rm N}_{\L/\k}(\P)$ be  the unique prime ideal of  $\O_\k$ under $\p$. 
Put  $U=\{\p_\k\}$ and still denote by abuse $U=U(\F)=\{ \P \subset \O_\F, \ \P | \p_\k\}$ when $\F/\k$ is a finite extension.

\begin{rema} When  $S=\emptyset$ and $s=1$, in the main theorems (Theorems \ref{maintheo1bis} 
and \ref{maintheorem0}
) the set $\SS$ considered is composed of such prime ideals. The set $\SS$ is of positive density. This density  depends on  the discriminant of $\F^T/\Q$ and on the size of $\Gal(\F^T/\Q)$.
The discriminant of $\L'/\Q$ is related to the number field $\K$; the discriminant of  $\F^T/\L'$ depends
on the wild ramification in  $\F^T/\L'$ and on the tame ramification at $T$;  and the size of  $\Gal(\F^T/\L')$ depends the $p$-class group of $\K$, on the  signature of $\K$ and on the size of $|T|$.  
\end{rema}

\begin{prop} \label{lemmepreparatoire} 
With  the previous notations and conditions (especially the choice of $\P$),  we get the isomorphism of $\G$-modules:  $\I^T(U,\L) \simeq \I^T(U,\k) \simeq \fq_p$.\end{prop}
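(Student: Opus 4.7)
The plan is to compute $\I^T(U,\L)$ and $\I^T(U,\k)$ separately, showing that both are isomorphic to $\fq_p$ with trivial $\G$-action. The algebraic engine is Lemma \ref{relation-triviale}: the annihilator of $x$ (hence of $x_0 = \Theta(x)$) in $\fq_p[\G]$ is exactly $\fq_p N$.

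First I would exploit the fact that $\P$ splits totally in $\L/\k$: its decomposition group in $\G$ is trivial, so $\G$ acts simply transitively on $U(\L) = \{\P^g : g \in \G\}$ and, taking $\P$ as base point, one obtains a canonical $\fq_p[\G]$-isomorphism $\bigoplus_{\P' \in U(\L)} \fq_p \simeq \fq_p[\G]$. Since the Frobenius of $\P$ in the Kummer extension $\F^T/\L'$ is non-trivial, one inherits $\N\P^g \equiv 1 \pmod p$ for each $g$, so the inertia at $\P^g$ in $\GG_U^T(\L)/\Phi(\GG_U^T(\L))$ is cyclic of order dividing $p$. Assembling these local inertia contributions yields a natural $\fq_p[\G]$-surjection
$$\pi : \fq_p[\G] \twoheadrightarrow \I^T(U,\L).$$

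Next, I would determine $\ker(\pi)$ via Theorem \ref{gras-munnier}. Consider the $\fq_p[\G]$-linear map
$$\phi : \fq_p[\G] \longrightarrow \Gal(\F^T/\L'), \qquad e_{\P^g} \longmapsto \FF{\F^T/\L'}{\P^g},$$
which by the Chebotarev choice of $\P$ is right-multiplication by $x_0$, so by Lemma \ref{relation-triviale} $\ker(\phi) = {\rm Ann}(x_0) = \fq_p N$. Gras-Munnier identifies the $T$-split, $U$-ramified cyclic degree $p$ characters of $\GG_U^T(\L)$ (modulo those unramified at $U$) precisely with $\ker(\phi)$, and pairing such a character with the local inertia produces its ramification exponents. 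A vector $(a_g) \in \fq_p[\G]$ lies in $\ker(\pi)$ exactly when every such character annihilates $\sum_g a_g \tau_{\P^g}$, i.e.\ when $(a_g) \in \ker(\phi)^\perp = (\fq_p N)^\perp$, the augmentation ideal $\ker(\fq_p[\G] \to \fq_p)$. Therefore
$$\I^T(U,\L) \simeq \fq_p[\G]/\ker(\fq_p[\G] \to \fq_p) \simeq \fq_p$$
with $\G$ acting trivially. For $\I^T(U,\k)$, the set $U(\k) = \{\p_\k\}$ is a single prime with $\N\p_\k \equiv 1 \pmod p$, so $\I^T(U,\k)$ is a quotient of $\fq_p$; it suffices to produce one $T$-split, $\{\p_\k\}$-totally ramified cyclic degree $p$ extension of $\k$. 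By construction (\ref{sous-espace}), $x_0 \in \Gal(\F^T/\F_\k^T \L')$, so $\FF{\F^T/\L'}{\P}$ restricts to the identity on $\F_\k^T \L'$, whence $\FF{\F_\k^T/\k'}{\p_\k} = 1$. Applying Gras-Munnier to $\k$ with the singleton $\{\p_\k\}$ then produces the required extension and gives $\I^T(U,\k) \simeq \fq_p$.

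The principal delicate point will be justifying the equality $\ker(\pi) = \ker(\phi)^\perp$: this requires checking that degree $p$ characters of $\GG_U^T(\L)$ unramified at $U$ (which factor through $\Cl^T(\L)/p$) contribute trivially to the inertia pairing, so that no relations beyond those detected by Gras-Munnier appear. Once this dictionary between inertia and Kummer characters is formalized, the rest of the argument is linear algebra over $\fq_p[\G]$.
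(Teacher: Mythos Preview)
Your approach and the paper's diverge at a crucial point. You try to compute $\I^T(U,\L)$ directly via a duality: you assert that Gras--Munnier identifies the quotient (characters of $\GG_U^T(\L)$)/(unramified characters) with $\ker(\phi)$, and then take orthogonals. But Theorem~\ref{gras-munnier} as stated in the paper is only an \emph{existence} result: it tells you for which subsets $U'\subset U$ a $U'$-totally ramified extension exists, not that the ramification exponents of such an extension are \emph{exactly} the coefficients $(a_g)$ of the Frobenius relation. The bijection you need --- that the ``ramification exponent'' map lands surjectively onto $\ker(\phi)$ --- is a genuine strengthening. It is true (it falls out of the governing-field machinery behind Gras--Munnier), but it is not what the paper gives you, and you have not proved it. Incidentally, the ``delicate point'' you flag (that unramified characters pair trivially with inertia) is in fact trivial; the real gap is the surjectivity onto $\ker(\phi)$.

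The paper sidesteps this entirely. It uses Gras--Munnier only in the weak form to get the \emph{upper} bound $d_p\,\I^T(U,\L)\le 1$: since the only Frobenius relation is the norm $N$, any $T$-split extension ramified at one $\P^g$ must be ramified at all of them, so (via Lemma~\ref{remarque-ramification0}) the inertia images are all proportional. For the \emph{lower} bound, rather than working at $\L$, the paper uses the natural $\G$-surjection
\[
\I^T(U,\L)\ \twoheadrightarrow\ \I^T(U,\k),
\]
which exists because $\p_\k$ splits completely in $\L/\k$. Your own computation of $\I^T(U,\k)\simeq\fq_p$ (which is fine and matches the paper's) then forces $d_p\,\I^T(U,\L)\ge 1$, and equality follows. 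This surjection is the trick you are missing: it converts a hard direct computation into a squeeze between an easy upper bound over $\L$ and an easy lower bound over $\k$.
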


\begin{proof}
 Suppose that there exists a non-trivial relation between the conjuguates  $\displaystyle{\FF{\F^T/\L'}{\P}}^g$, $g\in \G$, of $\displaystyle{\FF{\F^T/\L'}{\P}}$: $\displaystyle{\big(\sum_{g\in \G} a_g g\big)\cdot \FF{\F^T/\L'}{\P}=0},$
with $a_{g_0}\neq 0$ for at least one  $g_0\in \G$.  Then, as $\langle \FF{\F^T/\L'}{\P}\rangle = \langle x_0\rangle $, by 
Lemma  \ref{relation-triviale},  one gets $a_g =a_{g_0} \neq 0$ for all $g\in \G$. Thus by Theorem \ref{gras-munnier},   every $T$-split   degree $p$ cyclic  extension  of $\L$ which is ramified at one prime  $\P_0|\p$ is totally ramified  at all $\P_0^g$, $g\in \G$. That means that   $d_p \I^T(U) \leq 1$  (it is an easy 
generalization of Lemma \ref{remarque-ramification0}). 

We now show that the number field  $\k$ has a $T$-split, $\{\p_\k\}$-totally ramified,  degree  $p$ cyclic extension. 
Indeed, by the choice of $\P$, one knows that $\FF{\F^T/\L'}{\P} \in \langle x_0 \rangle  \subset  \H'$ and consequently, $\displaystyle{\FF{\L'\F_\k^T/\L'}{\P} =1}$. By the properties of the  Artin symbol, one gets 
$$\FF{\L'\F_\k^T/\k'}{{\rm N}_{\L'/\k'}(\P)}=\FF{\L'\F_\k^T/\L'}{\P}=1,$$
where $\FF{\F_\k^T/\k'}{{\rm N}_{\L'/\k'}(\P)}=1$. We then remark that ${\rm N}_{\L'/\k'}(\P)$ is a prime ideal of $\O_\k$ above $\p$. By Theorem \ref{gras-munnier}, it proves the existence of a $T$-split, $\{\p_\k\}$-totally ramified, degree $p$ cyclic extension of $\k$.
Then,  $\I^T(U,\k)\simeq \fq_p$ as  $\G$-modules.
But one still has $\I^T(U,\L) \twoheadrightarrow_\G \I^T(U,\k)$, because $\p_\k$ splits totally in  $\L/\k$. By comparing the $p$-rank, one  finally obtains: $\I^T(U,\L) \simeq \I^T(U,\k) \simeq \fq_p$. 
\end{proof}

\medskip

To finish this part, we present a result of avoidance.

\begin{prop} \label{inertie} Suppose that the  $\fq_p[\G]$-module $\Gal(\F^T/\L')$ contains a free sub-module $\H'$ of rank $|\G|$  with basis $(x_g)_{g\in \G}$. Put $\displaystyle{x_0=\sum_{g\in \G} (g-1) x_g \in \H'}$. By Chebotarev density Theorem, take a prime ideal  $\P$ of $\O_\L$ such that $\langle \FF{\F^T/\L'}{\P}\rangle = \langle x_0\rangle $. 
Suppose moreover that $$\H' \bigcap \F(S)= \{0\},$$
where  $\F(S)$ is the subgroup of  $\Gal(\F^T/\L')$ generated by the  Frobenius of a $\G$-stable set $S$ of ideals of $\O_\L$.
Then, as $\G$-modules,  $\I_S^T(U,\L) \simeq \I_S^T(U,\k) \simeq \I^T(U,\k) \simeq \fq_p$, where $U=\{ \P^g, \ g\in \G\}$. 
Moreover $\I_S^T(U,\L) \cap \I_U^T(S,\L)=\{e\}$.
\end{prop}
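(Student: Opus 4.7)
The proof extends Proposition \ref{lemmepreparatoire} to the setting with a nontrivial ramification set $S$, using the avoidance hypothesis $\H' \cap \F(S) = \{0\}$ in the spirit of Corollary \ref{coro-evitement}. To establish $\I_S^T(U,\L) \simeq \fq_p$, I would first bound $d_p \I_S^T(U,\L) \leq 1$. Any nontrivial relation $\sum_{g \in \G} a_g\, \iota_{\P^g} = 0$ in $\I_S^T(U,\L)$ corresponds, via Theorem \ref{gras-munnier} applied in the $T$-split $S \cup U'$-ramified setting, to a combination $\sum_{g \in \G} a_g\, g \cdot \FF{\F^T/\L'}{\P} \in \F(S)$. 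Since $\langle \FF{\F^T/\L'}{\P}\rangle = \langle x_0 \rangle$, such a combination lies in $\langle x_0 \rangle_\G \cap \F(S) \subseteq \H' \cap \F(S) = \{0\}$, and Lemma \ref{relation-triviale} then forces $(a_g)$ to be a scalar multiple of the all-ones vector. This single universal relation yields $d_p \I_S^T(U,\L) \leq 1$.

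For the reverse inequality and the identification with $\I^T(U,\k)$, I would repeat the descent argument of Proposition \ref{lemmepreparatoire} essentially verbatim. By the construction of \S\ref{kummer0}, $x_0 \in \Gal(\F^T / \F_\k^T \L')$, so $\FF{\F^T/\L'}{\P}$ restricts trivially to $\Gal(\F_\k^T \L' / \L')$. Taking norms yields $\FF{\F_\k^T/\k'}{{\rm N}_{\L'/\k'}\P} = 1$, and Theorem \ref{gras-munnier} applied over $\k$ supplies a nontrivial $T$-split, $\{\p_\k\}$-totally-ramified cyclic degree $p$ extension of $\k$. Hence $\I^T(U,\k) \simeq \fq_p$; combined with the complete splitting of $\p_\k$ in $\L/\k$, we obtain a $\G$-equivariant surjection $\I_S^T(U,\L) \twoheadrightarrow \I_S^T(U,\k) \twoheadrightarrow \I^T(U,\k)$, and comparing $p$-ranks forces all three groups to be isomorphic to $\fq_p$ as $\G$-modules.

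For the final claim $\I_S^T(U,\L) \cap \I_U^T(S,\L) = \{e\}$: the natural projection $\pi : \GG_{S\cup U}^T(\L)^{\pel} \twoheadrightarrow \GG_S^T(\L)^{\pel}$ has kernel exactly $\I_S^T(U,\L)$ (the Frattini image of the normal closure of inertia at $U$), so the intersection equals $\ker(\pi|_{\I_U^T(S,\L)})$, whose image in $\GG_S^T(\L)^{\pel}$ is the inertia-at-$S$ subgroup $\I^T(S,\L)$. The required injectivity of $\I_U^T(S,\L) \to \I^T(S,\L)$ translates, via Gras-Munnier dually applied to both $\GG_{S\cup U}^T(\L)$ and $\GG_S^T(\L)$, into the statement that no new relation among the Frobenii at $S$ in $\Gal(\F^T/\L')$ appears when one enlarges the Kummer dual to accommodate ramification at $U$; any such new relation would produce a nonzero element of $\H' \cap \F(S)$, which is excluded by hypothesis.

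The main obstacle I anticipate is precisely this last step: rigorously converting the kernel of $\I_U^T(S,\L) \to \I^T(S,\L)$ into a clean Gras-Munnier avoidance statement for two disjoint ramification pieces simultaneously. The rest of the proof is a fairly mechanical adaptation of Proposition \ref{lemmepreparatoire} together with Corollary \ref{coro-evitement}.
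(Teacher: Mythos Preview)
Your treatment of the first two claims is essentially the paper's own argument: bound $d_p \I_S^T(U,\L)\leq 1$ via Gras--Munnier and the avoidance hypothesis $\H'\cap\F(S)=\{0\}$ together with Lemma~\ref{relation-triviale}, then invoke Proposition~\ref{lemmepreparatoire} (which you reproduce) to get the lower bound and the descent to $\k$.

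For the final claim $\I_S^T(U,\L)\cap\I_U^T(S,\L)=\{e\}$, however, you are working much harder than necessary, and your proposed route through a comparison of Gras--Munnier relations for two different ramification sets is both vague and not obviously correct as stated (the governing field $\F^T$ does not change when you enlarge the ramification set, so the phrase ``enlarging the Kummer dual to accommodate ramification at $U$'' has no clear content). The paper's argument is a one-line cardinality trick that uses what you have already proved: since $\I_S^T(U,\L)\simeq\fq_p$ has order $p$, a nontrivial intersection would force $\I_S^T(U,\L)\subset\I_U^T(S,\L)$. But then any $T$-split cyclic degree-$p$ extension of $\L$ unramified outside $U$ (i.e.\ killing $\I_U^T(S,\L)$) would also kill $\I_S^T(U,\L)$ and hence be everywhere unramified, contradicting $\I^T(U,\L)\simeq\fq_p$. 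So the obstacle you anticipated dissolves once you exploit $|\I_S^T(U,\L)|=p$.
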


\begin{proof} 
As  $x_0 \in \H'$,   the module $\langle x_0 \rangle_\G $ intersects   $\F(S)$ trivially. 
As for Proposition \ref{lemmepreparatoire}, it implies that any  $T$-split cyclic degree  $p$ extension  of $\L$,  $S$-ramified and totally ramified at $\P_0|\p$ is totally ramified at  all  $\P_0^g$, $g\in \G$. 
Hence,  $d_p \I_S^T(U,\L) \leq 1$. 
But by Proposition  \ref{lemmepreparatoire}, one knows that $d_p \I^T(U,\L) \geq 1$. As  $\I_S^T(U,\L) \twoheadrightarrow \I^T(U,\L)$
one obtains  $\I_S^T(U,\L) \simeq_\G \fq_p$.

 Suppose now $\I_S^T(U,\L) \cap \I_U^T(S,\L) \neq \{e\}$. As $\I_S^T(U,\L)$ is of order $p$, it implies that  $\I_S^T(U,\L) \subset \I_U^T(S,\L)$ and then every $T$-split, $U$-ramified, cyclic degree $p$ extension  of $\L$,  is in fact everywhere unramified, which contradicts   $\I^T(U,\L) \simeq \fq_p$.
\end{proof}

\begin{rema} 
The main question it to find an element $x$ in $\big(\V^T/(\L^\times)^p\big)^*$ such that ${\rm Ann}(x)=\fq_p N$. In some cases,  one can find a such element in a free module of rank  $1$. Typically if  $\G=\langle x \rangle$ is cyclic, it suffices to take  $x=g-1$. Or, in the semisimple case,  \emph{i.e.} when  the order of $\G$ is coprime to $p$, take  $\displaystyle{x=\sum_{g \in \G}(g-1)}$.   
\end{rema}


 

\part{Proof of the main results} \label{PartIII}

\section{The strategy}\label{section-proof}

\subsection{}

Let $\L/\K /\k$ be a $\sigma$-uniform tower; put $\Gamma=\Gal(\L/\K)$, $\G=\Gal(\L/\k)$ and $\Delta=\langle \sigma \rangle$. We still assume that $\sigma$ is of order $\ell~|~(p-1)$.

Denote by $d$ the $p$-rank of $\Gamma$ and by $r$ the $p$-rank  of the fixed points of $\sigma$ acting on $\Gamma^{\pel}=\Gamma/\Phi(\Gamma)$.  Let $x_1,\cdots , x_{n} \in \Gamma$ be some lifts of some generators of $\Gamma^{\pel}$ respecting the action of $\sigma$ (see \S \ref{semisimpleaction}). We fix $x_1\cdots, x_r$ the lifts of the fixed points. Hence, by Proposition \ref{pointfixe-uniform}, $\gamchapzero=\langle x_1,\cdots, x_r\rangle$, the pro-$p$ group $\gamchap$ is topologically generated by the conjuguates $x_i^g, i=1, \cdots, r$,  $g\in \GG:=\Gamma/\gamchap$  of the  $x_i$. Moreover 
by  proposition \ref{generateurs}, $\gamchappel$ is minimaly generated as $\fq_p[[G]]$-module by the family  $\{x_1 \Phi(\gamchap),\cdots, x_r\Phi(\gamchap)\}$.

\subsection{} \label{section-clef}
Now assume that $\Gamma$ is the Galois group of a pro-$p$ extension unramified outside $S$ and totally split at $T$, \emph{i.e.} a quotient of $\GG_S^T=\Gal(\K_S^T/\K)$. Suppose moreover that the places in $S$ are coprime to $p$, in other words, $S$ is tame. Then $\GG_S$ and $\Gamma$ are  {FAb}. Put $\F:=\L^{\gamchap}$ and $\GG:=\Gal(\F/\K)$. The situation is summarized in the diagram below.

$$\xymatrix{& \K_S^T\\
 \L \ar@{-}[ur] \ar@{-}[dd]\ar@{.}@/^3pc/[ddd]^{\Gamma} \ar@{-}[dd]\ar@{.}@/^1pc/[dd]^{\gamchap} \ar@{.}@/_3pc/[dddd]_{\G} & \\
 &\\ 
 \F \ar@{-}[d]\ar@{.}@/_1pc/[d]_{G}
 &\\
 \K \ar@{-}[d]\ar@{.}@/^1pc/[d]^{\langle \sigma \rangle}& \\
\k &}$$

By Proposition \ref{fini1}: $[\F:\K]< \infty$, and by maximality of $\K_S^T$, one has $\K_S^T=\F_S^T$; put $\GG_S^T(\F):=\Gal(\K_S^T/\F)$. 

\medskip

Then the natural map $\GG_S^T(\F) \twoheadrightarrow \gamchap$ factors through $\psi:\GG_S^{T,ab}(\F) \twoheadrightarrow (\gamchap)^{ab} $.

Of course, $\GG$ acts on $\GG_S^T(\F)$ and on $\gamchap$ and then $\psi$ is a $\GG$-morphism of abelian groups.

\

We recall that $x_1\cdots, x_r $ are in $\Gamma $, they can be lifted to $\GG_S^T$. In fact, by construction, the  elements $x_1\cdots, x_r $ are in $\GG_S^T(\F)$  and by Proposition \ref{generateurs}, their classes generate   $\displaystyle{\gamchappel}$ as $\fq_p[\GG]$-modules. Put $\MM:=\langle \GG\cdot x_i \Phi(\GG_S^T(\F)), \ i=1,\cdots, r\rangle \subset \big(\GG_S(\F)\big)^{\pel}$.

\begin{prop}\label{analytic-arithmetic} 
The morphism  $\psi$ induces a surjective $\GG$-morphism from $\MM $ to $\displaystyle{\gamchappel}$.

\end{prop}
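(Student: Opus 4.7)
The plan is to produce $\bar\psi$ as the mod-$p$ reduction of $\psi$, restrict it to $\MM$, and then invoke Proposition \ref{generateurs}(i) to conclude surjectivity.

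First, I would observe that the natural surjection $\pi:\GG_S^T(\F)\twoheadrightarrow\gamchap$ (obtained because $\Gamma=\GG_S^T(\F)/\Gal(\K_S^T/\L)$ in a way compatible with the quotient $\Gamma\twoheadrightarrow \Gamma/\gamchap^{\rm kernel}$—or more concretely, because $\gamchap$ is the image of $\GG_S^T(\F)$ under $\GG_S^T\twoheadrightarrow\Gamma$ and the further quotient) is equivariant for the conjugation action of $\GG=\Gal(\F/\K)=\Gamma/\gamchap$ on both sides. On the source, $\GG$ acts via any lift to $\GG_S^T$, and this action descends unambiguously to $\GG_S^{T,ab}(\F)$ since inner automorphisms trivialize on the abelianization; on the target, $\GG=\Gamma/\gamchap$ acts on $\gamchap^{ab}$ by the conjugation action of $\Gamma$ on its normal subgroup $\gamchap$. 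The compatibility $\pi(gxg^{-1})=g\pi(x)g^{-1}$ holds at the group level for every lift, hence after abelianizing, $\psi:\GG_S^{T,ab}(\F)\twoheadrightarrow\gamchap^{ab}$ is $\GG$-equivariant.

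Next, I would reduce modulo $p$. Because $\Phi(\gamchap)=\gamchap^p[\gamchap,\gamchap]$, one has $\gamchappel=\gamchap^{ab}/p$, and similarly $\GG_S^T(\F)^{\pel}=\GG_S^{T,ab}(\F)/p$. So $\psi$ induces a $\GG$-equivariant surjection
$$\bar\psi:\GG_S^T(\F)^{\pel}\twoheadrightarrow\gamchappel.$$
Now lift each $x_i\in\gamchap\subset\Gamma$ ($i=1,\dots,r$) to an element—still denoted $x_i$—of $\GG_S^T(\F)$; by construction $\pi(x_i)=x_i$, so $\bar\psi$ sends the Frattini class of $x_i$ in $\GG_S^T(\F)^{\pel}$ to the class $x_i\Phi(\gamchap)$ in $\gamchappel$. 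By equivariance, $\bar\psi$ sends the $\GG$-orbit of $x_i\Phi(\GG_S^T(\F))$ into the $\GG$-orbit of $x_i\Phi(\gamchap)$, and therefore the $\fq_p[\GG]$-submodule $\MM$ of $\GG_S^T(\F)^{\pel}$ generated by these orbits is mapped into the $\fq_p[\GG]$-submodule of $\gamchappel$ generated by the classes $x_1\Phi(\gamchap),\dots,x_r\Phi(\gamchap)$.

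Finally, by Proposition \ref{generateurs}(i), those classes generate all of $\gamchappel$ as $\fq_p[\GG]$-module. Hence $\bar\psi(\MM)=\gamchappel$, which gives the desired surjective $\GG$-morphism $\MM\twoheadrightarrow\gamchappel$. The only genuinely delicate point is the matching of the two conjugation actions via $\pi$; once equivariance at the level of $\psi$ is justified, the rest is a direct application of the already established generation result for $\gamchappel$.
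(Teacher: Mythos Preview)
Your proposal is correct and follows essentially the same approach as the paper. The paper does not give a separate proof of this proposition; rather, the surrounding text in \S\ref{section-clef} already records that $\psi$ is a $\GG$-morphism, that the $x_i$ lift to $\GG_S^T(\F)$, and that by Proposition~\ref{generateurs} their classes generate $\gamchappel$ as $\fq_p[\GG]$-module, which is exactly the chain of observations you spell out.
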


Now, we make our key observation: the group $\MM$ is a subgroup of   $\big(\GG_S^T(\F)\big)^{\pel}$, it may be described by class field theory, and the $G$-structure of $\displaystyle{\gamchappel}$  depends only on the pro-$p$ group $\Gamma$.

\medskip

{\it As we have mentioned  in the beginning of this work, the goal is  to find some situations where the $\GG$-structures of $\MM$ and of $\displaystyle{\Gamma_\sigma^{ab}}$ are not compatible.}

\section{Proof of Theorem \ref{maintheorem0}}

Let us start with a notation. For a finitely generated pro-$p$-group $G$, denote by $(G_n)_n$ the central series of $G$:   $G_0=G$, and for $n \geq 0$, $G_{n+1}=[G_n,G_n]$. Put   $G^{n}=G/G_n$.

\begin{defi}
Let $n\geq 1$. Denote by $\K_S^{(n)}$ the subfield of $\K_S$ fixed by $(\GG_S)_n$, whose Galois
group over $\K$ is thus $\GG_S^n$. It is also the $n$th step of the  $p$-tower  $\K_S/\K$ of $\K$, unramified outside $S$.
\end{defi}

Recall the integer $m(\ell)$ defined in (\ref{entier-m}): it is an upper bound of the solvability length of the quotient $G:=\Gamma/\gamchap$. See also remark \ref{longueur-derivee}.

\medskip

We are now able to prove Theorem \ref{maintheorem0} of the section  \ref{section-presentation}.

\begin{theo}[Theorem \ref{maintheorem0}]\label{maintheorem}  Let $\K$ be a number field  equipped with an automorphism $\sigma $ of order  $\ell~|~(p-1)$; put $\k=\K^\sigma$.
Suppose that there exists a finite set  $S$ of tame primes of  $\O_\k$ such that the action of  $\sigma$ on  $\GG_S^{ab}$ is fixed point free. Fix $s \in \Z_{>0}$.

Let $T$ be a finite set of prime ideals of $\O_\k$ that totally split  in  $\K^{(m(\ell))}_S/\k$ and such  that  $|T|\geq \A + s|\G|(|S||\G|+1)$, where $\A=\A(\K^{(m(\ell))}_S/\K) $  (see  remarks \ref{remaA0} and \ref{remaA1}). Then there exists  $s$ sets $\SS_1, \cdots, \SS_s$, of ideal primes of  $\O_\k$, all of positive density,   such that for $\Sigma=S\cup S'$ with  $S'=\{\p_1,\cdots, \p_s\}$, where   $\p_i \in \SS_i$,  $i=1,\cdots, s$, one has: 
\begin{enumerate}
\item[(i)]  $(\GG_\Sigma^{T})^{\pel} \simeq_\G (\GG_S )^{\pel} \bigoplus \ ( \fq_p)^{\oplus^s}$;
\item[(ii)] there is no continuous Galois representation $\rho : \GG_\Sigma^T \rightarrow \Gl_m(\Q_p)$  which is \fpm \ and $\Gamma_\sigma$ is supported at $S'$, where $\Gamma$ is the image of $\rho$.
\end{enumerate} 
\end{theo}

\begin{proof}
The proof is a combination of the previous results. 
First,  the extension  $\K^{(m(\ell))}/\k$ is a Galois extension. Put $\L_0=\K_S^{(m(\ell))}$ and $\G=\Gal(\L_0/\k)$. 
 Consider  the  $\fq_p[\G]$-module $\fq_p\otimes \O_{\L_0}^T$ and let  $\fq_p[\G]^t \oplus N$ be its decomposition  as $\fq_p[\G]$-modules where   $N$ is of torsion (see \S \ref{section5.1.3}). Thanks to Theorem  \ref{constante-liberte}, as $T$ is  sufficiently large, one gets    $t \geq s|\G|(|S||\G|+1)$.

\medskip

Let us conserve the notations of \S \ref{rappel-kummer}.
Let $\F(S)$  be the sub-$\fq_p[\G]$-module  of $\Gal(\F^T/\L')$ generated by the Frobenius of the prime ideals of $S$ (see \S \ref{chebotarev-consequences}).

\begin{lemm}\label{lemme-technique}
Suppose that $t\geq s|\G|(|S||\G|+1)$.  Then there exists  $s|\G|$  $T$-units $\varepsilon_g^i \in \O_{\L_0}^T$,  $g\in \G$, $i=1,\cdots, s$,  such that 
\begin{enumerate}
\item[(i)] for every  $i=1,\cdots, s$, the $\fq_p[\G]$-module   $\displaystyle{\sum_{g\in \G}\fq_p[\G] \varepsilon_g^i}$ is free of rank $|\G|$, with basis $\{\varepsilon_g^i,\ g\in \G\}$;
\item[(ii)] the $\fq_p[\G]$-modules  $\displaystyle{\sum_{g\in \G}\fq_p[\G] \varepsilon_g^i}$ are in direct factors: 

$\displaystyle{\sum_{i=1}^s \sum_{g\in \G}\fq_p[\G] \varepsilon_g^i=\bigoplus_{i=1}^s \big(\sum_{g\in \G}\fq_p[\G] \varepsilon_g^i\big) }$;
\item[(iii)] following the notations of  Section \ref{rappel-kummer},  for $i=1,\cdots, s$, $$\Theta\big(\big( \displaystyle{\sum_{g\in \G} \fq_p[\G] \varepsilon_g^i\big)^*(1)\big) \cap \F(S)=\{0\}}.$$ 
\end{enumerate}
\end{lemm}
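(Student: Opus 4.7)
The strategy is to translate the problem via Kummer duality to an existence question about well-placed direct summands in a free $\fq_p[\G]$-module, and then to solve it with a greedy basis-selection argument.

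The first step is Kummer translation. Starting from the free direct summand $M_0 = \bigoplus_{j=1}^{t} \fq_p[\G]\eta_j$ of $\fq_p\otimes\O_{\L_0}^T$ of rank $t \geq s|\G|(|S||\G|+1)$, the Frobenius property of the semi-local ring $\fq_p[\G]$ (\S\ref{section5.1.3}) combined with the Kummer isomorphism $\Theta$ of (\ref{iso}) extends Proposition~\ref{liberté} to a bijection $N \leftrightarrow \Theta(N^*(1))$ between direct $\fq_p[\G]$-summands of $M_0$ and direct $\fq_p[\G]$-summands of $\H_0 := \Theta(M_0^*(1)) \subset \Gal(\F^T/\L')$, respecting direct-sum decompositions. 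Hence it suffices to exhibit inside $\H_0$ a collection of $s$ rank-$|\G|$ free $\fq_p[\G]$-direct summands $\H^1,\dots,\H^s$ in direct sum, each intersecting $\F(S)$ trivially; any $\fq_p[\G]$-basis of the corresponding unit-side summand $M_i$ then furnishes the desired $\varepsilon_g^i$.

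Next, since $\F(S)$ is generated by Frobenius elements at the primes of $\L_0$ above $S\subset\O_\k$, and each prime of $S$ has at most $|\G|$ places in $\L_0$, one has $d := \dim_{\fq_p}\F(S) \leq |S|\cdot|\G|$. Partition $\{1,\ldots,t\}$ into $s$ disjoint blocks $B_1,\dots,B_s$ of size $|\G|(|S||\G|+1)$, and work inside the rank-$|B_i|$ free direct summand $P_i := \bigoplus_{j \in B_i}\fq_p[\G]\eta_j$ of $M_0$ for each $i$ independently; the direct-sum property (ii) is then automatic from block disjointness.

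The core step is a greedy selection inside a single block. On the Galois side $\Theta(P_i^*(1))$, iteratively choose basis vectors $\bar{e}_{j_1},\dots,\bar{e}_{j_{|\G|}}$ so that at step $k$, after passing to the quotient by the $\fq_p[\G]$-spans of the previous choices, the rank-1 free $\fq_p[\G]$-span of $\bar{e}_{j_k}$ has trivial intersection with the image of $W_i := \F(S)\cap \Theta(P_i^*(1))$. The number of ``bad'' basis elements at each step is at most $\dim \bar{W}_i \leq d$, since picking one nonzero element from each bad rank-1 summand yields $\fq_p$-linearly independent elements of $\bar{W}_i$ (the summands being in direct sum). Because $|B_i|-(k-1) > d$ throughout the $|\G|$ steps, a good choice always exists. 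A ``leading coefficient'' argument then shows that $\H^i := \bigoplus_k \fq_p[\G]\bar{e}_{j_k}$ avoids $W_i$ globally: if $\sum_k \lambda_k \bar{e}_{j_k}\in W_i$ has largest nontrivial index $k^*$, projecting to the step-$k^*$ quotient contradicts the goodness of $\bar{e}_{j_{k^*}}$.

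The main obstacle lies in two places: first, justifying that the Kummer correspondence actually preserves direct-summand structure (this is where the Frobenius property of $\fq_p[\G]$ from \S\ref{section5.1.3} is essential, extending Proposition~\ref{liberté} from rank $1$ to arbitrary rank); and second, the leading-coefficient verification that upgrades stepwise avoidance to the global statement $\H^i \cap \F(S) = 0$. Once these are established, properties (i) and (ii) of the lemma are immediate from the basis construction and from the disjointness of the blocks, while (iii) is exactly the content of the greedy construction.
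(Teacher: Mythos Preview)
Your proof is correct, but the paper's argument is considerably simpler and proceeds by a single pigeonhole step rather than an iterative greedy construction. After passing to the Galois side via Kummer duality (as you also do), the paper partitions the free rank-$t$ summand of $\Gal(\F^T/\L')$ into $|S||\G|+1$ blocks each of rank $s|\G|$, which is possible since $t\ge s|\G|(|S||\G|+1)$. Because $\dim_{\fq_p}\F(S)\le |S||\G|$ and the blocks are in direct sum, $\F(S)$ can intersect at most $|S||\G|$ of these blocks nontrivially (one nonzero witness per block would give linearly independent vectors in $\F(S)$). Hence at least one block of rank $s|\G|$ misses $\F(S)$ entirely, and splitting that block into $s$ sub-blocks of rank $|\G|$ yields all the $\varepsilon_g^i$ at once.

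By contrast, you partition into $s$ blocks of rank $|\G|(|S||\G|+1)$ and run a $|\G|$-step greedy selection inside each, needing the additional leading-coefficient argument to upgrade stepwise avoidance to global avoidance. Both approaches rest on the same elementary observation (a $d$-dimensional subspace meets at most $d$ direct summands), but the paper applies it once at the top level while you apply it $|\G|$ times per block. The paper's route is shorter and in fact yields the slightly stronger conclusion that $\bigoplus_{i=1}^s \H^i$ (not just each $\H^i$ separately) avoids $\F(S)$; your route, while heavier, is more flexible in that it would still work under weaker hypotheses on how the free summands are distributed.
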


\begin{proof}  
Let us start with $\fq_p\otimes \O_{\L_0}^T \simeq \fq_p[\G]^t \oplus N$. By Kummer duality, $\Gal(\F^T/\L')$ contains  $\fq_p[\G]^t$ in direct factor, the free modules coming from the image of the  dual of  $T$-units by $\Theta$ (see \S \ref{kummer0}). As $d_p \F(S) \leq |S||\G|$, the $\fq_p[\G]$-module $\F(S)$ intersects at most $|S||\G|$ modules each one isomorphic to $\fq_p[\G]^{s|\G|}$. Hence as $t\geq s|\G|(|S||\G|+1)$,  there exists at least one module isomorphic to $\fq_p[\G]^{s|\G|}$ that does not intersect
  $\F(S)$, in other words, there exists $s|\G|$ free submodules  $\MM_i$ of $\Gal(\F^T/\L')$, $i=1,\cdots, s|\G|$, all in direct factors, such that  $\displaystyle{\F(S) \cap \big( \sum_i \MM_i\big) =\{0\}}$. Then  the $T$-units given by $\Theta^{-1}(\MM_i)$ satisfy (i), (ii) and (iii) of the Lemma.
\end{proof}

 \medskip
 
 Let us adapt  the Proposition \ref{inertie} in our context. 
 For $i=1,\cdots, s$, let $\H^i $ be the free $\fq_p[\G]$-modules of basis  $\{x_{g}^i, \ g\in  \G\}$. Recall that these modules are obtained by Kummer duality from the $T$-units  of Lemma \ref{lemme-technique}. Put also $\displaystyle{x_0^i:= \sum_{g\in \G} (g-1)x_g^i \in \H'}$. 
 By Chebotarev density Theorem, let  $\SS_i$ be the set of prime ideals   $\p$ of $\O_\K$, such that the  (class of) Frobenius of $\p$ 
 in $\F^T/\k$ corresponds to  $x_0^i$: the  $\SS_i$ is of positive density. 
   
   \medskip
   
   Then consider $S'=\{\p_1,\cdots, \p_s\}$ a set of prime ideals of  $\O_\k$, with $\p_i \in \SS_i$;  put  $\Sigma= S\cup S'$.
   
   \medskip

For $i=1,\cdots, s$, choose a  prime ideal $\P_i|\p_i$ of $\O_{\L_0}$ above  $\p_i$. Put $U_i=\{\P_i^g, g\in \GG\}$.

\medskip

Let us fix $i\in\{1, \cdots, s\}$, and put $$S_i=S\cup U_1 \cup \cdots \cup U_{i-1}\cup U_{i+1} \cup \cdots \cup U_r,$$ here, we drop $U_i.$

\begin{lemm} \label{lemme-technique1}
\begin{enumerate}
\item[(i)] Let $\R'/\k$ be a Galois subextension of $\L_0/\k$ of Galois group $G'$.
Then  as  $\fq_p[G']$-modules: $$\I_S^T(S',\R') \simeq \bigoplus_{i=1}^s \I^T_{S_i}(U_i,\R' ) \simeq \big(\fq_p\big) ^{\oplus^s}.$$
\item[(ii)] At the level of $\K$,  one has: $$\big(\GG_\Sigma^{T}(\K)\big)^{\pel} \simeq_\G \big(\GG_S^T(\K)\big)^{\pel} \bigoplus \big(\fq_p\big)^{\oplus^{s}} \simeq_\G \big(\GG_S(\K)\big)^{\pel} \bigoplus \big(\fq_p\big)^{\oplus^{s}} \cdot$$
\end{enumerate}
\end{lemm}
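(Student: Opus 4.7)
The plan is to apply Proposition~\ref{inertie} iteratively, one application per index $i\in\{1,\ldots,s\}$, exploiting the direct-sum structure provided by Lemma~\ref{lemme-technique}, and then to descend from $\L_0$ to the arbitrary Galois subextension $\R'/\k$.

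For part (i), I would fix $i$ and verify the hypotheses of Proposition~\ref{inertie} with $S$ replaced by $S_i=S\cup\bigcup_{j\neq i}U_j$ and $\H'=\H^i$. The Frobenius subgroup $\F(S_i)\subset\Gal(\F^T/\L_0')$ is contained in $\F(S)+\sum_{j\neq i}\fq_p[\G]\cdot x_0^j$, because by the defining property of $\SS_j$ the Frobenius at any prime of $U_j$ is a $\G$-conjugate of $x_0^j\in\H^j$; by Lemma~\ref{lemme-technique}(ii)--(iii) this ambient module meets $\H^i$ trivially. Proposition~\ref{inertie} then yields $\I_{S_i}^T(U_i,\L_0)\simeq\fq_p$ as $\G$-modules, together with the independence $\I_{S_i}^T(U_i,\L_0)\cap\I_{U_i}^T(S_i,\L_0)=\{e\}$; the latter says precisely that in the Frattini quotient $(\GG_\Sigma^T(\L_0))^{\pel}$, the inertia at $U_i$ meets trivially the combined inertia at $S\cup\bigcup_{j\neq i}U_j$. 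Varying $i$ assembles these into the direct-sum decomposition $\I_S^T(S',\L_0)\simeq\bigoplus_i\I_{S_i}^T(U_i,\L_0)\simeq\fq_p^{\oplus s}$, each summand carrying the trivial $\G$-action by the construction of $x_0^i$.

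Descent to a general Galois subextension $\R'/\k$ of $\L_0/\k$ is then routine: since $\L_0=\K_S^{(m(\ell))}$ is unramified outside $S$ and $S\cap S'=\emptyset$, every prime of $\R'$ above $S'$ is unramified in $\L_0/\R'$. Restriction therefore identifies inertia subgroups in $\R'$ with those in $\L_0$ equivariantly for the projection $\G\twoheadrightarrow G'$, transferring the isomorphism to $\R'$.

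For part (ii), I would apply (i) with $\R'=\K$, so $G'=\Delta=\langle\sigma\rangle$. The tame inertia sits in a $\Delta$-equivariant exact sequence
\[
0\longrightarrow\I_S^T(S',\K)\longrightarrow(\GG_\Sigma^T(\K))^{\pel}\longrightarrow(\GG_S^T(\K))^{\pel}\longrightarrow 0,
\]
which splits because $\fq_p[\Delta]$ is semisimple ($|\Delta|=\ell$ is coprime to $p$); by (i) the kernel is $\fq_p^{\oplus s}$ with trivial $\Delta$-action, giving the first isomorphism. For the second, I would use that $T$ splits completely in $\K_S^{(m(\ell))}/\K$, hence a fortiori in the maximal abelian elementary $p$-extension of $\K$ unramified outside $S$, so the $T$-split condition is vacuous at the Frattini level and $(\GG_S^T(\K))^{\pel}\simeq(\GG_S(\K))^{\pel}$ as $\Delta$-modules. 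The main obstacle I anticipate is the bookkeeping in part (i): converting the direct-sum structure of the $\H^i$'s inside $\Gal(\F^T/\L_0')$ into a genuine direct-sum decomposition of inertia subgroups in the Frattini quotient. This translation rests on applying Gras--Munnier (Theorem~\ref{gras-munnier}) functorially and simultaneously over all indices $i$, keeping careful track of which primes are permitted to ramify and which must split at each stage of the iteration.
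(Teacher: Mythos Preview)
Your approach matches the paper's: apply Proposition~\ref{inertie} once for each index~$i$ with the enlarged set $S_i=S\cup\bigcup_{j\neq i}U_j$, use the resulting intersection conditions $\I_{S_i}^T(U_i,\L_0)\cap\I_{U_i}^T(S_i,\L_0)=\{e\}$ to assemble the direct sum at $\L_0$, then descend; part~(ii) is handled identically via the split $\fq_p[\Delta]$-sequence and the splitting of $T$ in $\K_S^{(m(\ell))}$. One point deserves tightening. Your descent ``restriction identifies inertia subgroups \ldots\ transferring the isomorphism to $\R'$'' is too quick: although the inertia subgroups at $S'$ coincide inside $\GG_\Sigma^T(\R')$ and $\GG_\Sigma^T(\L_0)$ (because $\L_0/\R'$ is unramified at $S'$), the objects $\I_S^T(S',\L_0)$ and $\I_S^T(S',\R')$ are their images in \emph{different} Frattini quotients, so no direct identification is available. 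The paper instead uses that $\Phi(\GG_\Sigma^T(\L_0))\subset\Phi(\GG_\Sigma^T(\R'))\subset\Phi(\GG_\Sigma^T(\k))$ to produce surjections $\I_S^T(S',\L_0)\twoheadrightarrow\I_S^T(S',\R')\twoheadrightarrow\I_S^T(S',\k)$, and then sandwiches $\R'$ between the two endpoints, both already known to be $\fq_p^{\oplus s}$ from Proposition~\ref{inertie}.
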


\begin{proof} (i) First, take $R'=\L_0$ and fix $i$.
By Lemma \ref{lemme-technique}, $\H_i \cap \F(S_i)=\{0\}$.
The proposition \ref{inertie} applied to $U_i$ and to $S_i$ allows us  to get: $\I_{S_i}^T(U_i,\L_0)\simeq \I_{S_i}^T(U_i,\K) \simeq \I^T(U_i,\k) \simeq  \fq_p$ and $\I_{S_i}^T(U_i,\L_0)\cap \I_{U_i}^T(S_i,\L_0)=\{1\}$.
Hence when $i$ varies,  the groups  $\I^T_{S_i}(U_i,\L_0)$ are in direct factors  in $\big(\GG_\Sigma^T(\L_0)\big)^{\pel}$.

Take now $\R'$ in $\L_0/\k$. As $\L_0$ is $S'$-ramified, one has $\I_S^T(S',\L_0)\twoheadrightarrow  \I_S^T(S',\R')\twoheadrightarrow \I_S^T(S',\k)$ and one concludes thanks to  $\I_S^T(S',\L_0)\simeq  \I_S^T(S',\k)$.

(ii)  comes from the exact sequence of $\fq_p[\langle \sigma\rangle]$-modules (which splits by semisimplicity):
$$1\longrightarrow \bigoplus_{i=1}^s \I^T_{S_i}(U_i,\K) \longrightarrow \big(\GG_\Sigma^T(\K)\big)^{\pel} \longrightarrow \big(\GG_S^T(\K)\big)^{\pel} \longrightarrow 1 \cdot $$
and by  the choice of $T$: $\big(\GG_S^T(\K)\big)^{\pel} \simeq \big(\GG_S(\K)\big)^{\pel}$.
\end{proof}

\

 Let us start with a  $\sigma$-uniform extension $\L/\K/\k$ such that  $\Gal(\L/\K)$ is a uniform quotient of $\GG_\Sigma^T(\K)$.
Put $\Gamma=\Gal(\L/\K)$ and assume that  $d\geq 1$.

\

As $(\GG_\Sigma^T(\K))^{\pel} \twoheadrightarrow \Gamma^{\pel}$, the action of $\sigma$ on $\Gamma^{\pel}$ has at most $s$ "fixed points". Moreover by Boston \cite{Boston2} and \cite{Boston3}, this action must have at least one non-trivial fixed point. Hence, here  as $\Gamma$ is supposed to be non trivial, we get   $1 \leq r \leq s$, where  $r=\dim_{\fq_p} (\Gamma^{\pel})_\sigma$. Denote by $x_1,\cdots, x_{n} \in \Gamma $ the element of $\Gamma$ that respect the  action of $\sigma$, with the choice:  $\sigma(x_i)=x_i$, for $i=1,\cdots, r$ (see section \ref{semisimpleaction}).

By Lemma \ref{pointfixe-uniform} and Proposition \ref{generateurs}, one knows that $\gamchap$ is topologically generated by the  $\GG$-conjuguates of the $x_i$, $i=1,\cdots,r$, where $\GG:=\Gamma/\gamchap$.
 It is the notion of \fpm \ that will give us some information about the $x_i$, $i=1,\cdots, r$.
 
 \medskip
 
 For $i=1,\cdots, s$, and $\p_i \in S'$, let  $y_i$ be a generator of the inertia group $I_{\p_i}$ of a ideal prime $\p_i$ in $\L_0$: $I_{\p_i}=\langle y_i \rangle$.

 \medskip It is clear that $I_{\p_i}$ intersects non trivially $\gamchap$. 
The  \fpm  \ impose then $$\langle y_1,\cdots, y_{s} \rangle^{\rm Norm}=  \gamchap\cdot $$ In particular
\begin{enumerate}\item[(i)]  if we note by $\F$ the subfield of $\L$ fixed by $\gamchap$,  then the extension $\F/\K$ is unramified at $S'$,  $\F \subset \K_S^T$ and $\GG_S^T(\K) \twoheadrightarrow \Gal(\F/\K)$;
\item[(ii)] the pro-$p$ group $\gamchap$ is generated by  the $G$-conjuguates of the $y_i$, $i=1,\cdots, s$.
\end{enumerate}

\medskip

On the other hand, the action of $\sigma$ on $G=\Gamma/\gamchap$ is fixed point free, hence  $G$ is nilpotent of length at most $n(\ell)$ (see remark \ref{longueur-derivee}).  Consequently, we get:  $\F\subset (\K_S^{T})^{(m(\ell))}=\K_S^{(m(\ell))}=\L_0$ by the choice of  $T$.

\medskip

By Lemma \ref{lemme-technique1}, the $\fq_p$-vector space $\I_S^T(S',\F) $ is of dimension $s$ and the action of  $G:=\Gal(\F/\K)$  on it is trivial: indeed, $\I_S^T(S',\F)\simeq_\G (\fq_p)^{\oplus^s}$.  But, by  Proposition \ref{analytic-arithmetic} and by the condition above the ramification at the prime ideals  $\p_i \in S'$,   $\I_S^T(S',\F) \twoheadrightarrow (\gamchap)^{\pel}$ and then  $G$ acts trivially on $(\gamchap)^{\pel}$. At this point, one uses the condition \fpm \ to obtain a contradiction: indeed in this case $G$ should act non trivially on $\displaystyle{(\gamchap)^{\pel}}$!
\end{proof}

\medskip

 \section{Applications}

\subsection{When  $\sigma$ is of order $2$}

 Theorem \ref{maintheo1bis} gives a context where the condition about the ramification is automatically satisfied.  Lets us give  a proof.

 \medskip
 
 We still conserve the main notations of Theorem  \ref{maintheorem}: let   $\K/\k$ be a quadratic extension; put  $\Gal(\K/\k)=\langle\sigma \rangle$. Let  $S$ be a finite set of ideal primes of $\O_\k$ such that  $p\nmid |\Cl_S(\k)|$.

 Let $S'=\{\p_1,\cdots, \p_s\}$ be a finite set of prime ideals of  $\O_\k$ such that $$\displaystyle{(\GG_\Sigma(\K))^{\pel} \simeq   (\GG_S(\K))^{\pel} \bigoplus (\fq_p)^{\oplus^s}},$$ where $\Sigma=S\cup S'$, with a slight abuse of notation. Let  $\I(S')$ be the subgroup of $\GG_\Sigma^{ab}(\K)$ generated by the inertia groups of the primes in $S'$. One then has $1 \longrightarrow \I(S') \longrightarrow \GG_\Sigma^{ab}(\K) \longrightarrow \GG_S^{ab}(\K) \longrightarrow 1$.
 
 Take a minimal set of generators $\{x_1,\cdots, x_s, y_1,\cdots, y_t\}$ of $\GG_\Sigma^{ab}=\GG_\Sigma^{ab}(\K)$ as follows: the elements $x_1,\cdots,x_s$ satisfy $\sigma(x_i)=x_i^{-1}$ and the elements $y_1,\cdots, y_t $ satisfy  $\sigma(y_i)=y_i $ (see for example \cite{HR}, Theorem 2.3).

 \begin{lemm}\label{non-ram0}  Under the conditions of this section, one has $\I(S')=\langle y_1,\cdots, y_t\rangle$.
 \end{lemm}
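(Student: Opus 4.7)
The plan is to identify both sides of the claimed equality with the $\sigma$-fixed part $M^+$ of the pro-$p$ abelian group $\GG_\Sigma^{ab}(\K)$. Because $p$ is odd and $\sigma$ has order $2$, the $\Z_p[\langle\sigma\rangle]$-module $\GG_\Sigma^{ab}(\K)$ splits canonically as $M^+\oplus M^-$; modulo the Frattini subgroup, the chosen family $\{y_1,\dots,y_t\}$ is by construction a basis of $M^+/p$, so by Burnside's basis theorem the $y_j$'s topologically generate $M^+$ and $\langle y_1,\dots,y_t\rangle = M^+$. It then suffices to prove the equality $\I(S') = M^+$ in $\GG_\Sigma^{ab}(\K)$.

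For the inclusion $\I(S')\subseteq M^+$, I would analyze the $\sigma$-action locally at each $\p\in S'$. The construction of the sets $\SS_i$ performed in Section~\ref{section-proof} (through the Chebotarev prescription of Proposition~\ref{inertie} and the local setup of Proposition~\ref{lemmepreparatoire}) forces each $\p\in S'$ to be inert in $\K/\k$: a split prime would produce a non-trivial $-1$-eigenvector in the Frattini quotient, coming from the difference of the two conjugate inertia generators, which would contradict the hypothesis that the new contribution to $(\GG_\Sigma)^{\pel}$ is entirely $\sigma$-fixed. Given that $\p$ is inert, with unique prime $\P\mid\p$ in $\K$, the involution $\sigma$ acts on the cyclic pro-$p$ group $I_\P$ through the local Frobenius of $\p$ in $\K_\P/\k_\p$, i.e.\ by multiplication by $q=\N_\k(\p)$. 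Since $|I_\P|$ divides the $p$-part of $q^2-1$ and since $p\mid q-1$ (otherwise $I_\P/p$ would contain no $\sigma$-fixed line), $p\nmid q+1$ forces $|I_\P|$ to divide $q-1$; thus $q\equiv 1\pmod{|I_\P|}$ and $\sigma$ acts trivially on all of $I_\P$, not merely on its Frattini quotient. Summing over $\p\in S'$ yields $\I(S')\subseteq M^+$.

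The reverse inclusion $M^+\subseteq \I(S')$ is equivalent to the vanishing $(\GG_S^{ab}(\K))^\sigma=0$, which I would establish by a Schur--Zassenhaus descent. Suppose $\K'/\K$ were a non-trivial abelian $p$-extension unramified outside $S$ on which $\sigma$ acts trivially. Then $\K'/\k$ is Galois, and Theorem~\ref{ZST}, applied to the coprime orders $|\sigma|=2$ and $|\Gal(\K'/\K)|$, combined with the trivial $\sigma$-action gives a direct product decomposition $\Gal(\K'/\k)\simeq\Gal(\K'/\K)\times\langle\sigma\rangle$. The subfield $\K''$ of $\K'$ fixed by $\{1\}\times\langle\sigma\rangle$ is then a non-trivial abelian $p$-extension of $\k$, linearly disjoint from $\K$. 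An elementary local check on ramification indices, using that $\K'/\K$ is unramified outside $S$ and that $\Gal(\K''/\k)$ has $p$-power order while $\Gal(\K/\k)$ has order $2$, shows that $\K''/\k$ is unramified outside $S$, contradicting the hypothesis $p\nmid|\Cl_S(\k)|$, which asserts $\GG_S(\k)^{ab}=0$.

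The main obstacle is making the local claim in the first inclusion watertight: one must check that the Chebotarev choice of the $\SS_i$ really does produce inert primes $\p$ with $q\equiv 1\pmod p$, and that the resulting $\sigma$-action on the inertia $I_\P$ is trivial at the integral (and not only Frattini) level. Once that local analysis has been spelled out, the two inclusions combine to yield $\I(S')=M^+=\langle y_1,\dots,y_t\rangle$.
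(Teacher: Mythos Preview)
Your argument for the inclusion $M^+\subseteq\I(S')$ is correct and matches the paper's: once one knows that $\sigma$ acts by $-1$ on $\GG_S^{ab}(\K)$ (equivalently, $(\GG_S^{ab}(\K))^\sigma=0$, which your Schur--Zassenhaus descent establishes), every $\sigma$-fixed element of $\GG_\Sigma^{ab}(\K)$ must lie in the kernel $\I(S')$.

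The gap is in the other inclusion. Your entire local analysis rests on the claim that each $\p\in S'$ is \emph{inert} in $\K/\k$, but this is precisely the opposite of what the construction gives. In Proposition~\ref{lemmepreparatoire} and Proposition~\ref{inertie}, the primes $\p$ defining the sets $\SS_i$ are chosen (via Chebotarev) so that they \emph{split completely} in $\L_0/\k$, hence in particular in $\K/\k$. Your heuristic for forcing inertness --- that a split prime would contribute a $-1$-eigenvector ``from the difference of the two conjugate inertia generators'' --- is also wrong: the construction is arranged exactly so that $\I^T(U_i,\K)\simeq\fq_p$ with trivial $\G$-action (Lemma~\ref{lemme-technique1}), meaning the images of $I_{\P_1}$ and $I_{\sigma(\P_1)}$ in the Frattini quotient coincide, and no $-1$-eigenvector appears at that level. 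Once the primes are split, $\sigma$ does not lie in the decomposition group at $\P$, so there is no ``local Frobenius of $\p$ in $\K_\P/\k_\p$'' acting on $I_\P$; instead $\sigma$ interchanges $I_{\P_1}$ and $I_{\P_2}$, and your computation with $q=\N_\k(\p)$ has no footing.

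The paper's argument for $\I(S')\subseteq\langle y_1,\dots,y_t\rangle$ avoids any local analysis. It is a pure $p$-rank count: one squeezes $t\leq d_p\I(S')\leq |S'|=t$ to get $d_p\I(S')=t$, observes the direct-sum splitting $\GG_\Sigma^{ab}=\langle y_j\rangle\oplus\langle x_i\rangle$, and notes that any element of $\I(S')\setminus\langle y_j\rangle$ would force $\I(S')$ to meet $\langle x_i\rangle$ nontrivially, pushing $d_p\I(S')$ to at least $t+1$. This global counting argument is both shorter and independent of the splitting behaviour of the individual primes; you should replace your local attempt by something along these lines.
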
 
 
 \begin{proof} As $\I(S')\big(\GG_\Sigma^{ab}\big)^p/\big(\GG_\Sigma^{ab}\big)^p \simeq \ker \left( \GG_\Sigma^{\pel} \rightarrow \GG_S^{\pel}\right)$, 
  $$t = d_p \big[ \I(S')\big(\GG_\Sigma^{ab}\big)^p/\big(\GG_\Sigma^{ab}\big)^p\big] \leq d_p \I(S') \leq |S'|=t,$$
 hence, $d_p \I(S')=|S'|$.
 
 \medskip 
 As $\sigma$ acts by $-1$ on $\GG_S^{ab}$, we get $\langle y_1,\cdots, y_t\rangle \subset \I(S')$. Suppose  $\langle y_1,\cdots, y_t\rangle \subsetneq \I(S')$. Put  $x\in \I(S') \backslash \langle y_1,\cdots, y_t\rangle$. Then
 there exists $ y \in \langle y_1,\cdots, y_t\rangle$ such that $xy \in \langle x_1,\cdots, x_s \rangle$ 
 with $xy \neq e$.
 As $\GG_\Sigma^{ab}=\langle y_1,\cdots, y_t\rangle \oplus \langle x_1,\cdots, x_s \rangle$, one gets $d_p \I(S') \geq d_p \langle y_1,\cdots, y_t\rangle +1$, and so a contradiction. 
 \end{proof}

    Let  $\L/\K/\k$ be a $\sigma$-uniform tower in  $\K_\Sigma/\k$. Put $\F:=\L^{\gamchap}$.
   
   Let us recall that $\Gal(\F/\K)$ is fixed point free under the action of $\sigma$ of order $2$: hence $\F/\K$
   is an abelian subextension of $\K_\Sigma^{ab}$.

  \begin{lemm} \label{non-ram}
The extension $\F/\K$ is  $S$-ramified. Moreover, $\F=\L\cap \K_S^{ab} $. \end{lemm}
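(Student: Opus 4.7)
The plan is to establish both assertions by combining the structural result of Lemma \ref{non-ram0}---which identifies $\I(S')$ with the $\sigma$-fixed summand of $\GG_\Sigma^{ab}$---with the observation that $G=\Gal(\F/\K)=\Gamma/\gamchap$ is the maximal $\sigma$-fixed-point-free quotient of $\Gamma$.

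First I would verify that $\F/\K$ is $S$-ramified. Since $\sigma$ has order $\ell=2$ and acts without non-trivial fixed points on $G$ (Proposition \ref{fini1}), Proposition \ref{FPF} yields that $G$ is abelian. Thus $\F\subset \K_\Sigma^{ab}$ and the restriction $\GG_\Sigma \twoheadrightarrow \Gal(\F/\K)=G$ factors through a map $\pi:\GG_\Sigma^{ab}\twoheadrightarrow G$. Each generator $y_j$ of $\I(S')$ satisfies $\sigma(y_j)=y_j$, so $\pi(y_j)$ is a $\sigma$-fixed element of $G$ and hence trivial. By Lemma \ref{non-ram0} this forces $\pi(\I(S'))=\{1\}$, so $\F/\K$ is unramified at $S'$ and, being unramified outside $\Sigma$, is $S$-ramified. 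In particular, $\F\subseteq \L\cap\K_S^{ab}$.

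For the reverse inclusion I would set $\F''=\L\cap \K_S^{ab}$ and exploit the semisimple isotypic decomposition
$$\GG_\Sigma^{ab}=\langle y_1,\ldots,y_t\rangle\oplus \langle x_1,\ldots,x_s\rangle,$$
valid because $p>2$ and $\sigma$ has order $2$; the first summand is the $\sigma$-fixed part and the second is the $\sigma$-antifixed one. Lemma \ref{non-ram0} identifies the first summand with $\I(S')$, so $\GG_S^{ab}=\GG_\Sigma^{ab}/\I(S')$ is identified with the antifixed summand, on which $\sigma$ acts by inversion; since $p$ is odd, this action (and that on every quotient) is FPF. Consequently $\Gal(\F''/\K)$, which is simultaneously a quotient of $\Gamma$ and of $\GG_S^{ab}$, is a $\sigma$-FPF quotient of $\Gamma$.

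The concluding step is to observe that $G=\Gamma/\gamchap$ is the maximal $\sigma$-FPF quotient of $\Gamma$: in any FPF quotient each $\sigma$-fixed element of $\Gamma$ must die, so the kernel contains $\gamchapzero$ and therefore, being normal, contains $\gamchap$. Applying this to $\Gal(\F''/\K)$ yields $\F''\subseteq \F$, and combined with the first step gives $\F=\L\cap\K_S^{ab}$. I do not foresee a serious obstacle: the only delicate point is the correct identification of the $\sigma$-isotypic decomposition of $\GG_\Sigma^{ab}$, which is automatic here since $\ell=2$ is coprime to $p$.
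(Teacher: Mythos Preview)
Your proof is correct. The first half—showing $\F/\K$ is $S$-ramified by observing that $\I(S')$ is $\sigma$-fixed and $G$ is $\sigma$-FPF—is essentially identical to the paper's argument.

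For the reverse inclusion the two proofs diverge. The paper sets $\F_1=\L\cap\K_S^{ab}$ and argues that $\sigma$ acts simultaneously by $-1$ on $\Gal(\F_1/\F)$ (as a subquotient of $\GG_S^{ab}$) and by $+1$ on it (via the surjection $(\Gamma_\sigma^{ab})_G\twoheadrightarrow\Gal(\F_1/\F)$ and Proposition~\ref{generateurs}(ii)), forcing $\F_1=\F$. You instead observe directly that $\Gal(\F''/\K)$, being a quotient of $\GG_S^{ab}$, is $\sigma$-FPF, and then invoke the maximality of $G=\Gamma/\gamchap$ among FPF quotients of $\Gamma$—a fact that follows immediately from the definition of $\gamchap$ as the normal closure of $\gamchapzero$. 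Your route is slightly more economical in that it avoids Proposition~\ref{generateurs}(ii); the paper's route, on the other hand, makes explicit the tension between the arithmetic action on $\Gal(\F_1/\F)$ and the group-theoretic one, a theme that recurs in the proof of the main theorem.
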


   \begin{proof} By Lemma \ref{non-ram0},  the involution $\sigma$ acts trivially on $\I(S')$.  As $\sigma$ acts without non-trivial fixed point fixed on $G=\Gamma/\gamchap$ and that  $G_\Sigma^{ab} \stackrel{\theta}{\twoheadrightarrow} G$, one then gets $\theta(\I(S'))=\{1\}$, meaning exactly that  $\F/\K$ is $S$-ramified, \emph{i.e.} $\F \subset \K_S^{ab}$.
   Put $\F_1= \L\cap \K_S^{ab} $. Obviously, $\F \subset \F_1$. As  $\sigma$ acts by  $-1$ on $\Cl_S(\K)$, 
   $\sigma$ acts by $-1$ on     $\Gal(\F_1/\F)$: indeed if not, $\Gal(\K_S^{ab}/\F)$ would have a fixed point (see the proof of Proposition \ref{fini1}). On the other hand, as $\F_1/\K$ is abelian, one still has $\big(\Gamma_\sigma^{ab}\big)_G \twoheadrightarrow \Gal(\F_1/\F)$. But by  Proposition \ref{generateurs}, the involution   $\sigma$ acts trivially on $\big(\Gamma_\sigma^{ab}\big)_G$, which implies that  $\sigma$ acts trivially on $\Gal(\F_1/\F)$.  
To conclude: $\sigma$ acts at a time by $-1$ and by $+1$ on $\Gal(\F_1/\F)$, consequently  $\F_1=\F$.
   \end{proof}

 \begin{rema}\label{remarque-ramification}
 Lemma \ref{non-ram} shows that the inertia groups  of the prime ideals $\p \in S_0$ are in  $\gamchap$.
  \end{rema}

\begin{prop}\label{ramification-pointfixe} Let us conserve the notations and the conditions of this section.
By Chebotarev density Theorem, choose a finite set $T$ of prime ideals of $\O_\k$, disjoint from $S$, such that:
\begin{itemize}
\item each prime ideal of $T$ totally splits in  $\K_S^{ab}/\K$;
\item $\Cl_S^T(\K_S^{ab})$ is trivial.
\end{itemize} 
Let $\rho : \GG_\Sigma^T \rightarrow \Gl_m(\Q_p)$ be a continuous representation 
with  $\sigma$-uniform image $\Gamma$. Then   $\gamchap$ is supported at  $S'$, meaning the inertia groups  of the prime ideals of $S'$ generate  the  group $\gamchap$.
\end{prop}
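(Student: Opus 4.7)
My plan is to show that the closed normal subgroup $\I'$ of $\Gamma$ generated by the inertia groups at the primes of $\L$ above $S'$ coincides with $\gamchap$. By Remark \ref{remarque-ramification} these inertia subgroups all sit inside $\gamchap$, so $\I' \subseteq \gamchap$ is automatic and the task reduces to proving $\L^{\I'} = \F$. I will set $\E := \L^{\I'}$ and begin by identifying $\E$ with $\L \cap \K_S^T$: killing the inertia at $S'$ produces the maximal subextension of $\L/\K$ unramified at $S'$, and since $\L/\K$ is itself $\Sigma$-ramified and $T$-split, this gives $\E \subseteq \K_S^T$; conversely, every $S$-ramified subextension of $\L/\K$ is fixed by each inertia group at $S'$, so $\L \cap \K_S^T \subseteq \E$. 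This yields the chain $\F \subseteq \E \subseteq \L$ together with $\E = \L \cap \K_S^T$.

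The heart of the argument will be to exploit the two hypotheses on $T$ in tandem in order to identify the maximal abelian $S$-ramified $T$-split pro-$p$ extension $\F^*$ of $\F$ with $\K_S^{ab}$. One inclusion is free: $\K_S^{ab}/\F$ is abelian and $S$-ramified, and is $T$-split because the primes of $T$ split completely in $\K_S^{ab}/\K$, hence in $\K_S^{ab}/\F$. For the opposite inclusion I will use restriction: the natural injection $\Gal(\F^*\K_S^{ab}/\K_S^{ab}) \hookrightarrow \Gal(\F^*/\F)$ makes $\F^*\K_S^{ab}/\K_S^{ab}$ into an abelian $S$-ramified $T$-split $p$-extension of $\K_S^{ab}$, which the hypothesis $\Cl_S^T(\K_S^{ab}) = 1$ forces to be trivial; hence $\F^* \subseteq \K_S^{ab}$.

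Granted $\F^* = \K_S^{ab}$, the maximal abelian (over $\F$) subextension of $\E/\F$ is $\E \cap \F^* = \E \cap \K_S^{ab}$, which by Lemma \ref{non-ram} is contained in $\L \cap \K_S^{ab} = \F$ and therefore equals $\F$. Thus $\Gal(\E/\F)$ is a pro-$p$ group with trivial abelianization, so its Frattini quotient vanishes; the Burnside basis theorem then forces $\Gal(\E/\F) = 1$, i.e., $\E = \F$ and $\I' = \gamchap$. I expect the only delicate point to be the double inclusion $\F^* = \K_S^{ab}$, which is precisely where both hypotheses on $T$ must be combined; the remainder is Galois-theoretic bookkeeping.
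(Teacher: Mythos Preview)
Your argument is correct and follows the same line as the paper's proof. Both hinge on Lemma~\ref{non-ram} (to get $\F=\L\cap\K_S^{ab}$ and the containment of the $S'$-inertia in $\gamchap$) and on the hypothesis $\Cl_S^T(\K_S^{ab})=1$ to rule out any nontrivial abelian $S$-ramified $T$-split piece above $\F$. The only cosmetic difference is that the paper argues by contradiction---assuming $\L_1/\F$ nontrivial, passing to a degree~$p$ step, and pushing it up to $\K_S^{ab}$---whereas you compute directly that the maximal abelian $S$-ramified $T$-split extension $\F^*$ of $\F$ equals $\K_S^{ab}$ and then conclude $\Gal(\E/\F)^{ab}=1$; this is a cleaner packaging of the same idea.
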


\begin{proof} The $\sigma$-uniform tower $\L/\K/\k$ is in  $\K_\Sigma^T/\k$ and then in $\K_\Sigma/\k$: 
one can apply Lemma  \ref{non-ram} to this situation.
By Lemma \ref{non-ram} the inertia groups  of $\p\in S'$ are in $\gamchap$. Denote by  $\L_1$ the subfield of $\L$  fixed by these  inertia groups: the extension $\L_1/\F$ is  $T$-split and $S$-ramified. Suppose that $\L_1/\F$  is not trivial. Then  one can assume that  $\L_1/\F$ is of degree $p$. Then by  Lemma \ref{non-ram}, we get that $\L_1\K_S^{ab}/\K_S^{ab}$ is $T$-split and $S$-ramified, cyclic degree $p$ extension. But by hypothesis $\Cl_S^T(\K_S^{ab})$  is trivial, and then, by class field theory, one obtains a contradiction. 
\end{proof}

 \medskip
 
 We can now say few words about the proofs of the results of  \S \ref{section-introduction} and \S \ref{section-presentation}.

  \begin{itemize}
  \item
  Theorem \ref{maintheo1bis} can be deduced from Theorem \ref{maintheorem} and from Proposition \ref{ramification-pointfixe}.
 \item
 Theorem of the subsection \ref{rappel-boston} comes from the fact that   every involution $\sigma$ on $\Sl_2^1(\Z_p)$ is of type $t_\sigma(\Gamma)=(1,b)$ and then is  {\fpm}  
 by Proposition \ref{prop-sl2}. (Here $T$ sufficiently large  means  also  that $\Cl^T(\K^H))$ is trivial.)
 \item Corollary  \ref{coro1} comes from the fact that the action of  $\sigma$ on $\Gamma$ should be trivial.  Thus  $\Im(\rho)$ comes  from $\k$ by compositum and then it suffices to remark that $d_p\Cl_S(\k) \leq |S|$. (Here, as previous,  $T$ sufficiently large  means also that  $\Cl^T(\K^H))$ is  trivial.)
\item Corollary \ref{coro-sln} can be deduced from Theorem \ref{maintheo1bis} and Proposition \ref{prop-sln}.
 \end{itemize}

\medskip

 \subsection{Along a  $\Z_p$-extension}
 The context of  the cyclotomic $\Z_p$-extension allows one to take    $T$ as small as possible.
 
\subsubsection{When $\ell=2$} \label{casl=2}  Take $p>2$.  Let  $\K/\k$ be a quadratic extension  such that: 
\begin{itemize}\item[(i)] $\K$ is totally real. Put $r_1=[\k:\Q]$;
\item[(ii)]  the $p$-class group along the $\Z_p$-cyclotomic extension of $\k$ is trivial;
\item[(iii)]  the number field $\K$ satisfies the Greenberg's conjecture.
\end{itemize}

 For $n\geq 0$, put $\K_n$ (resp. $\k_n$) for the $n$th steps  of  the $\Z_p$-cyclotomic extension $\K_\infty$  of $\K$ (resp. of $\k$): $[\K_n:\K]=[\k_n:\k]=p^n$. 
 
 \medskip

 We are going to apply Theorem \ref{maintheo1bis}  to the  $\sigma$-uniform extensions of $\K_n/\k_n$.

 \medskip
 
Take $n_0$ sufficiently large such that
\begin{itemize}
\item for all $n\geq n_0$, $\Cl(\K_{n+1}) \simeq \Cl(\K_n)$, which is always possible by condition (iii);
\item all prime ideals above $p$ are totally ramified  in  $\k_\infty/\k_{n_0}$.
\end{itemize}

  \medskip
  
  Let us fix $s\in \Z_{>0}$.

  \medskip
  
Put  $\C_0\simeq  \Cl(\K_n)$ and   $\C:=\Gal(\L_{n+1}/\k_n) \simeq \big( \C_0 \times \Z/p\Z\big) \rtimes \langle \sigma\rangle$. Let us apply the strategy developped in part \ref{PartII} in order  to find   free $\fq_p[\C]$-modules in $\O_{\L_n}^\times/(\O_{\L_n}^\times)^p$. Let us write $\O_{\L_n}^\times/(\O_{\L_n}^\times)^p=\fq_p[\C]^{t_n}\oplus N_n$, with $N_n$ of  torsion. Following  Theorem \ref{liber}, we get $t_n \geq r_1 p^n -(|\C|-1)d_p C -1$. Hence for large $n$,  we are guarantee that  $t_n \geq s|\C|$, and then the method developed  in the proof of Theorem \ref{maintheorem} can apply with  $T=\emptyset$ ! 
 Hence, there exists $s$ set  $\SS_i$, $i=1,\dots, s$  of prime ideals  of  $\O_{\k_n}$, all of positive density, such that
   for all set $S=\{\p_1,\cdots,\p_s\}$, with  $\p_i\in \SS_i$, one gets:
   $$\big(\GG_S(\K_{n+1})\big)^{\pel} \simeq \big(\GG_S(\K_{n})\big)^{\pel} \simeq  \C_0/p \bigoplus \ \big(\fq_p\big)^{\oplus^s} \cdot$$
  Consequently the  groups  $(\GG_S(\K_n)^{\pel})_n$  stabilize  in two consecutive steps: by a classical argument in Iwasawa theory (see for example \cite{fukuda}, theorem 1), one obtains that for  $n\geq m$, with $m$ sufficiently large:   $\GG_S(\K_n)^{\pel} \simeq \C_0/p \bigoplus \ \big(\fq_p\big)^{\oplus^s}$. Applying the strategy of the proof of Theorem \ref{maintheo1bis}, one obtains the following corollary:

\begin{coro}[Theorem \ref{cyclo}]
Under the conditions of this section, for sufficiently  large $m \in \Z_{>0}$, there exists $s$ set  $\SS_i$, $i=1,\cdots, s$, of prime ideals of $\O_{\k_m}$, all of positive density,  such that for all  set $S=\{\p_1,\cdots,\p_r\}$, with $\p_i \in \SS_i$, and for all $n\geq m$, one has: 
\begin{enumerate}
\item[(i)] $\big(\GG_S(\K_n)\big)^{\pel} \simeq  \C_0/p \bigoplus \ \big(\fq_p\big)^{\oplus^s}$ ;
\item[(ii)] the non existence of continuous representation $\rho : \GG_S(\K_n) \rightarrow \Gl_m(\Q_p)$ with  $\sigma$-uniform image  \fpm \ and $\gamchap$ supported at $S$. 
\end{enumerate}\end{coro}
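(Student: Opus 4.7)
The plan is to apply the strategy of Theorem \ref{maintheorem} (via Theorem \ref{maintheo1bis}) at each finite layer $\K_n/\k_n$ of the cyclotomic tower, exploiting the fact that the degree $[\k_n:\Q]=r_1 p^n$ grows exponentially while, by Greenberg's conjecture, the class group $\C_0=\Cl(\K_n)$ stabilizes for $n$ large. This degree-versus-class-group imbalance forces the constant $\A$ of Theorem \ref{constante-liberte} eventually to become negative, which is what permits us to take $T=\emptyset$ in the construction.

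First I would fix $n_0$ large enough that $\Cl(\K_{n+1})\simeq\Cl(\K_n)\simeq\C_0$ for all $n\geq n_0$ and that every prime of $\k$ above $p$ is totally ramified in $\k_\infty/\k_{n_0}$. For each $n\geq n_0$, working with the finite Galois group $\C\simeq (\C_0\times\Z/p\Z)\rtimes\langle\sigma\rangle$ over $\k_n$ described in \S\ref{casl=2}, Theorem \ref{liber} yields a decomposition of units as an $\fq_p[\C]$-module with free rank $t_n\geq r_1 p^n -(|\C|-1)\,d_p\C - 1$, and this exceeds $s|\C|$ for $n$ sufficiently large. Lemma \ref{lemme-technique} then extracts $s|\C|$ free rank-one $\fq_p[\C]$-submodules in direct sum and disjoint from the Frobenius subgroup at primes above $p$, and the Kummer--Chebotarev construction of Proposition \ref{inertie} produces $s$ positive-density sets $\SS_i$ of primes of $\k_m$ for a suitable $m\geq n_0$. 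By Lemma \ref{lemme-technique1} this already gives the decomposition (i) at level $n=m$.

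To propagate (i) to all $n\geq m$, I would invoke the standard two-step stabilization argument in Iwasawa theory: after possibly enlarging $m$ one ensures that the primes in each $\SS_i$ split completely in $\k_n/\k_m$ (this is arranged by choosing the Chebotarev conditions inside a field containing $\k_n(\zeta_p)$ for the relevant $n$), so the construction is insensitive to going up the tower. The stability $\GG_S(\K_{n+1})^{\pel}\simeq\GG_S(\K_n)^{\pel}$ at two consecutive levels, combined with total ramification of the primes above $p$ in $\K_\infty/\K_m$, then yields $\GG_S(\K_n)^{\pel}\simeq\C_0/p\oplus(\fq_p)^{\oplus s}$ for every $n\geq m$, by a direct Fukuda-type argument.

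For (ii) the proof proceeds exactly as in Theorem \ref{maintheo1bis}: a $\sigma$-uniform representation $\rho$ with $\gamchap$ supported at $S$ yields, via Proposition \ref{analytic-arithmetic}, a $G$-equivariant surjection from a submodule $\MM\subset \GG_S^{ab}(\F)/p$ onto $\gamchappel$, and Lemma \ref{lemme-technique1} applied at level $n$ forces $\MM\simeq(\fq_p)^{\oplus s}$ with trivial $G$-action, contradicting the \fpm{} hypothesis on $\sigma$. \emph{The hard part} I expect is adapting Proposition \ref{ramification-pointfixe} to the $T=\emptyset$ setting: its original proof used triviality of $\Cl^T(\K^H)$ to force the inertia at $S$ to cut out exactly $\gamchap$, and the substitute here must combine the Iwasawa-theoretic stabilization of ray class groups along the tower with the standing hypothesis ``$\gamchap$ supported at $S$'' built into the statement of (ii). Once that point is negotiated, the remainder is a transcription of the arguments for Theorem \ref{maintheorem}.
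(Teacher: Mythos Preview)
Your proposal is essentially correct and follows the same route as the paper: fix $n_0$ so that the class groups stabilise and primes above $p$ are totally ramified, apply Theorem \ref{liber} with $\G=\C$ to get $t_n\geq r_1p^n-(|\C|-1)d_p\C-1\geq s|\C|$ for $n$ large, run the Kummer--Chebotarev construction of \S\ref{kummer0}--\ref{chebotarev-consequences} with $T=\emptyset$ to produce the $\SS_i$, obtain the two-level equality $(\GG_S(\K_{n+1}))^{\pel}\simeq(\GG_S(\K_n))^{\pel}$, and then invoke Fukuda's stabilisation argument to propagate (i) to all $n\geq m$; for (ii) one transcribes the end of the proof of Theorem \ref{maintheorem}.

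Two small points of overcomplication. First, since here the initial tame set is empty ($S=\emptyset$ in the notation of Theorem \ref{maintheorem}), the Frobenius-avoidance condition (iii) of Lemma \ref{lemme-technique} is vacuous; there is no ``Frobenius subgroup at primes above $p$'' to dodge, and the required bound is simply $t_n\geq s|\C|$. Second, and more importantly, your ``hard part'' is not hard at all: the hypothesis ``$\gamchap$ supported at $S$'' is \emph{part of the statement} of (ii), so there is nothing to adapt from Proposition \ref{ramification-pointfixe}. That proposition only enters in the \emph{next} corollary of the paper, where one introduces an auxiliary set $T$ precisely in order to drop the support hypothesis. Here you may simply assume it and proceed directly to the $G$-module incompatibility as in Theorem \ref{maintheorem}.
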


  One can say more. Indeed, let us choose moreover a set 
 $T$  of prime ideals of  $\k_{m}$, such that
 \begin{itemize}
 \item each ideal prime of $T$ splits totally in  $\K_{m+1}^{(1)}/\k_{m}$;
 \item  $\Cl^T(\K_{m+1}^{(1)})$ is trivial.
 \end{itemize}
 Then,   $\Cl^T(\K_n^{(1)})$ is trivial for all $n\geq m$. 
  Proposition \ref{ramification-pointfixe} shows   that the  ramification will be supported by the fixed points.
  
  \begin{coro} With the conditions and notations of this section, for  $m$ sufficiently large,
  there exists $s$ sets  $\SS_i$, $i=1,\cdots, s$, of prime ideals of $\O_{\k_m}$, all of positive density, 
  such that for all set $S=\{\p_1,\cdots,\p_r\}$, with $\p_i \in \SS_i$, and for all $n\geq m$, one gets: 
\begin{enumerate}
\item[(i)] $\big(\GG_S^T(\K_n)\big)^{\pel} \simeq  \C_0/p \bigoplus \ \big(\fq_p\big)^{\oplus^s}$;
\item[(ii)] the non  existence of continuous representation $\rho : \GG_S^T(\K_n) \rightarrow \Gl_m(\Q_p)$ with  $\sigma$-uniform image \fpm. 
\end{enumerate}\end{coro}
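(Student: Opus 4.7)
The plan is to derive this refinement from the preceding corollary (the $T=\emptyset$ version just established) by using the $T$-structure to engineer the ramification support that is needed to invoke Proposition \ref{ramification-pointfixe}. The basic architecture mirrors the passage from Theorem \ref{cyclo} to Theorem \ref{maintheo1bis} in the non-cyclotomic case.

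First I would construct $T$ at level $m$: using Chebotarev density, select finitely many primes of $\k_m$ (disjoint from any candidate $S$ and from the primes above $p$) each splitting completely in $\K_{m+1}^{(1)}/\k_m$, and numerous enough that the $T$-conditions kill every generator of $\Cl(\K_{m+1}^{(1)})$. Such a choice is possible because $\Cl(\K_{m+1}^{(1)})$ is finite. Once $\Cl^T(\K_{m+1}^{(1)})=0$ is secured and the primes of $T$ split completely in $\K_n^{(1)}/\k_m$ for every $n\geq m$, an Iwasawa-theoretic propagation argument, using Greenberg's conjecture to ensure stabilization of the class groups along $\K_\infty/\K$, yields $\Cl^T(\K_n^{(1)})=0$ for every $n\geq m$, after possibly enlarging $m$.

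Next I would apply the previous corollary to produce the positive-density sets $\SS_1,\ldots,\SS_s$ of primes of $\O_{\k_m}$. For any $S=\{\p_1,\ldots,\p_s\}$ with $\p_i\in\SS_i$, that result gives $(\GG_S(\K_n))^{\pel}\simeq \C_0/p\oplus (\fq_p)^{\oplus s}$ for every $n\geq m$. To upgrade this to the $T$-version, I would observe that the triviality of $\Cl^T(\K_n^{(1)})$ together with the complete splitting of $T$ in $\K_n^{(1)}/\K_n$ forces $(\GG_S^T(\K_n))^{\pel}\simeq (\GG_S(\K_n))^{\pel}$, since at the Frattini level neither new generators (ramification at $T$ is forbidden) nor new relations (the $T$-split Hilbert class field collapses) appear. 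This gives (i).

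For (ii) I would argue by contradiction. Suppose $\rho:\GG_S^T(\K_n)\to \Gl_m(\Q_p)$ has $\sigma$-uniform image $\Gamma$ which is FPMF. Since $\Cl^T(\K_n^{(1)})=0$ and $T$ splits completely in $\K_n^{(1)}/\K_n$, Proposition \ref{ramification-pointfixe} applies verbatim (with $\K_n,\k_n$ in place of $\K,\k$, empty ambient $S$, and the present $S$ playing the role of $S'$), showing that $\gamchap$ is supported at $S$. The inertia generators of the $\p_i\in S$ then lie in the summand $(\fq_p)^{\oplus s}$ of (i), on which $\sigma$ acts trivially. Composing with the surjection of Proposition \ref{analytic-arithmetic} exhibits $\gamchap/\Phi(\gamchap)$ as a quotient of a trivial $G$-module, contradicting the FPMF hypothesis that $G=\Gamma/\gamchap$ acts non-trivially on $\gamchap/\Phi(\gamchap)$.

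The main technical obstacle I anticipate is the propagation step in the first paragraph: passing from $\Cl^T(\K_{m+1}^{(1)})=0$ to $\Cl^T(\K_n^{(1)})=0$ for all $n\geq m$. Under Greenberg's conjecture this should come from a standard control-theorem argument, since the Iwasawa module of $T$-split class groups along $\K_\infty$ is governed by the bottom layer once $T$ has been enlarged sufficiently, but it is the one step that genuinely uses the cyclotomic structure rather than simply quoting results from Parts II and III. Everything else is a direct assembly of previously established machinery.
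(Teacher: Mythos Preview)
Your proposal is correct and follows essentially the same approach as the paper. The paper's own argument is extremely terse: it simply records the choice of $T$ (primes of $\k_m$ splitting completely in $\K_{m+1}^{(1)}/\k_m$ with $\Cl^T(\K_{m+1}^{(1)})$ trivial), asserts that then $\Cl^T(\K_n^{(1)})$ is trivial for all $n\geq m$, and invokes Proposition~\ref{ramification-pointfixe} to force $\gamchap$ to be supported at $S$, after which the corollary follows from the machinery already in place. You have correctly unpacked each of these steps, and you are right that the propagation $\Cl^T(\K_{m+1}^{(1)})=0 \Rightarrow \Cl^T(\K_n^{(1)})=0$ for $n\geq m$ is the one point the paper leaves implicit; it rests on the stabilization $\Cl(\K_{n+1})\simeq\Cl(\K_n)$ already secured via Greenberg and the Fukuda-type argument cited earlier in the section.
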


 \

  To conclude this section, let us give an example. 
  
  Take $p=3$ and $\K=\Q(\sqrt{32009})$.  
   
     Let  $\displaystyle{\K_\infty=\bigcup_n\K_n}$  be the $\Z_p$-
   cyclotomic extension of $\K$. Put $\Gal(\K/\Q)=\langle \sigma \rangle$.
   A computation with Pari-GP \cite{pari} shows that for all  $n\geq 1$, $\Cl(\K_n) \simeq \Z/9\Z \times \Z/3\Z$.
  Following  Theorem \ref{constante-liberte},  remark \ref{remaA0} and Theorem \ref{maintheorem}, take  $n$ such that 
   $$r_1 3^n -(|C|-1)d_3 C -1 \geq s|C|,$$
   where $|C|=2\times 3^4$, $r_1=2$ and where $s$ is the number of fixed points that we want to introduce. Hence  $n\geq n_0= \lceil \log_3(2+s) +4\rceil$ holds. 
If moreover we take a set  $T$ of ideal primes of  $\O_{\k_{n_0}}$ all totally splits in $\K_{n_0+1}^H/\Q_{n_0}$ and such that  $\Cl^T(\K_{n_0+1})$ is trivial, one then gets:

\begin{coro}\label{exemple-cyclo}
Let  $\K=\Q(\sqrt{32009})$ and let $s\in \Z_{>0}$. Take $T$  as  before. There exists  $s$ sets $\SS_i$, $i=1,\cdots, s$,of prime ideals of $\O_{\Q_{n_0}}$, all of positive density, such that for all set $S=\{\p_1,\cdots, \p_s\}$, with $\p_i \in \SS_i$, and for $n\geq \log_3(2+s) +4$, one has:
\begin{enumerate}
\item[(i)] $\displaystyle{\GG_S^T(\K_n)^{\pel}}$ has $s$ independant fixed points under the action of $\sigma$;
\item[(ii)] there exists no continuous representation $\rho : \GG_S^T(\K_n) \longrightarrow \Gl_m(\Q_p)$ with  $\sigma$-uniform image  \fpm.
\end{enumerate} 
\end{coro}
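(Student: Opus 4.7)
My plan is to derive Corollary~\ref{exemple-cyclo} as a numerical specialization of Theorem~\ref{cyclo} to the pair $(p, \K) = (3, \Q(\sqrt{32009}))$, with the threshold $n_0$ read off explicitly from the freeness bound of Theorem~\ref{constante-liberte}. The argument breaks into three steps.

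I will begin by verifying the hypotheses of \S\ref{casl=2} and quantifying the freeness bound. Since $\K$ is real quadratic it is totally real with $r_1(\K) = 2$; the cyclotomic $\Z_3$-tower of $\Q$ has trivial $3$-class groups at every level; and the PARI/GP computation $\Cl(\K_n) \simeq \Z/9\Z \times \Z/3\Z$ (for all $n \geq 1$) confirms Greenberg's conjecture for $\K$ at $p=3$ (both Iwasawa invariants vanish) while recording $d_3 \Cl(\K_n) = 2$ uniformly in $n$. Setting $\C_0 = \Cl(\K_n)$ and $\C = (\C_0 \times \Z/3\Z) \rtimes \langle \sigma \rangle$, so $|\C| = 2 \cdot 3^4 = 162$, Theorem~\ref{liber} (or Remark~\ref{remaA0}) yields $t_n \geq r_1 \cdot 3^n - (|\C|-1)\, d_3 \C_0 - 1$ for the rank of a free $\fq_3[\C]$-summand of $\O_{\L_{n+1}}^\times/(\O_{\L_{n+1}}^\times)^3$. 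The requirement $t_n \geq s|\C|$ reduces to $2 \cdot 3^n \geq 162 s + 323$, which is met by $n_0 = \lceil \log_3(s+2) \rceil + 4$, since $2 \cdot 3^{n_0} \geq 162(s+2) = 162 s + 324$.

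Next I will select $T$ and invoke the general construction. By Chebotarev I can pick a finite $T \subset \O_{\Q_{n_0}}$ whose primes split completely in $\K_{n_0+1}^H/\Q_{n_0}$ and for which $\Cl^T(\K_{n_0+1})$ is trivial; norm-stability of ray class groups along the $\Z_3$-tower then forces $\Cl^T(\K_n^{(1)}) = 1$ for all $n \geq n_0$. The free $\fq_3[\C]$-submodule of rank $\geq s|\C|$ produced above gives, via Lemma~\ref{lemme-technique}, $T$-units $\varepsilon_g^i$ whose Kummer duals span free submodules $\H^i \subset \Gal(\F^T/\L')$ avoiding the Frobenius support of any previously chosen primes; the distinguished elements $x_0^i := \sum_{g \in \C}(g-1)\, x_g^i$, combined with Chebotarev on $\F^T/\Q$, single out the desired positive-density sets $\SS_1, \ldots, \SS_s$. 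Lemma~\ref{lemme-technique1} then delivers (i), while Proposition~\ref{ramification-pointfixe} together with the \fpm\ obstruction reviewed in \S\ref{section-proof} rules out any continuous $\sigma$-uniform \fpm\ representation $\rho \colon \GG_S^T(\K_n) \to \Gl_m(\Q_3)$, giving (ii).

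The main subtlety is not conceptual but concerns uniformity in $n$: a single pair $(n_0, T)$ must work for every $n \geq n_0$ simultaneously. This is precisely what Fukuda's stabilization criterion \cite{fukuda}, applied in Iwasawa theory once $\mu = \lambda = 0$ for $\K$ at $p=3$ is known, provides; without Greenberg's conjecture the $3$-rank of $\Cl(\K_n)$ could grow with $n$, invalidating the freeness bound at large $n$ and forcing $n_0$ and $T$ to depend on $n$.
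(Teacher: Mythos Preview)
Your proposal is correct and follows essentially the same approach as the paper: the corollary is derived as a direct numerical specialization of the machinery in \S\ref{casl=2} (Theorem~\ref{constante-liberte}/Theorem~\ref{liber} for the freeness bound, the choice of $T$, and the second corollary of \S\ref{casl=2}), with the same constants $|\C|=2\cdot 3^4$, $r_1=2$, $d_3=2$ and the same resulting threshold $n_0=\lceil\log_3(s+2)\rceil+4$. Your write-up is somewhat more explicit about which lemmas (Lemma~\ref{lemme-technique}, Lemma~\ref{lemme-technique1}, Proposition~\ref{ramification-pointfixe}, Fukuda stabilization) are being invoked, but the argument is the paper's.
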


\begin{rema}
If we start with a situation where the $p$-class group is cyclic along the $\Z_p$-cyclotomic extension, then  $\Cl(\K_n^H)$ is trivial: and then one can take   $T=\emptyset$. But in this case, the  group $\GG_S$ is of type  $(1,b)$, and one has seen in Proposition \ref{calcul-} (or Corollary \ref{coro-calcul-}) that this type is not compatible  with the type of   {FAb} uniform groups. And in this case, the expected conclusion is obvious!
\end{rema}

  \subsubsection{When $\ell$ is odd}
   
Let $\K/\k$ be a cyclic extension of  prime degree $\ell >2$. Assume that $\ell~|~(p-1)$.  Suppose that 
\begin{enumerate}\item[(i)] the extension  $\K/\k$ is totally real;
\item[(ii)]  the $p$-class group along the $\Z_p$-cyclotomic extension of $\k$ is trivial.
\end{enumerate}

Let us take the notation of the beginning of section  \ref{section-proof}. One has seen that $\K^{(m)}$ is the key number field, where $\K^{(m)}$is the  $m$th step of the Hilbert $p$-class field tower of  $\K$  and where $m = \log_2(n(\ell)+1)$.

Thus by the Greenberg's conjecture, for $n_0 \gg 0$ and for  $n\geq n_0$, on has $[\K_{n_0}^{(1)}:\K_{n_0}]=[\K_{n}^{(1)}:\K_{n_0}]$. If moreover, one assumes the Greenberg's conjecture for all the fields $\K_n$, there exists an integer  $n_1\geq n_0$ such that $n\geq n_1$, $[(\K_{n_1})^{(1)}:\K_{n_1}^{(1)}]=[(\K_{n}^1)^{(1)}:\K_n^{(1)}]$  and then $[\K_n^{(2)}:\K_n]=[\K_{n_1}^{(2)}:\K_{n_1}]$. By 
following this  process, one gets the existence of  $n_m \in \Z_{>0}$ such that for all  $n\geq n_m$, we get
   $[\K_n^{(m)}:\K_n]=[\K_{n_m}^{(m)}:\K_{n_m}]$ when supposing  the Greenberg's conjecture for the number fields  $\K_{n_i}$, $i=0,\cdots, m$.
      One can then apply the strategy of  the section \ref{casl=2} to obtain: 
  
  \begin{coro}
Under the conditions of this section,  in particular by assuming the Greenberg's conjecture for totally real number fields, for sufficiently  large $m \in \Z_{>0}$, there exists $s$ sets  $\SS_i$, $i=1,\cdots, s$ of prime ideals of  $\O_{\k_m}$, all of positive density, such that  for all set $S=\{\p_1,\cdots,\p_r\}$, with $\p_i \in \SS_i$, and for all $n\geq m$, one has:
\begin{enumerate}
\item[(i)] $\big(\GG_S(\K_n)\big)^{\pel}$ has $s$ independant fixed points under the action of $\sigma$; 
\item[(ii)] there is no continuous representation $\rho : \GG_S(\K_n) \rightarrow \Gl_r(\Q_p)$ of  $\sigma$-uniform image  
\fpm \ and where $\gamchap$ is supported at $S$.  
\end{enumerate}\end{coro}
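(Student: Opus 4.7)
The plan is to mimic the strategy already executed in section \ref{casl=2} for $\ell=2$, replacing the Hilbert $p$-class field $\K^H=\K^{(1)}$ by the $m(\ell)$-th step $\K^{(m(\ell))}$ of the $p$-class field tower, which is the governing field that appeared in Theorem \ref{maintheorem0}. The growth $[\K_n:\K]=p^n$ will do for us the job that a large auxiliary split-set $T$ did in Theorem \ref{maintheo1bis}, allowing us to take $T=\emptyset$.

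\textbf{Step 1: Stabilization of the solvable $p$-tower via Greenberg.} Since $G=\Gamma/\gamchap$ is a finite $p$-group of solvability length at most $m(\ell)$ (recall Proposition \ref{fini1} and Remark \ref{longueur-derivee}), the relevant governing field at level $n$ is $\K_n^{(m(\ell))}$. By iterated applications of Greenberg's conjecture for the totally real fields $\K_n^{(i)}$, $i=0,\dots,m(\ell)-1$ (together with the hypothesis that the $p$-class groups along $\k_\infty/\k$ are trivial, which forces $\sigma$ to act without fixed points on $\Cl(\K_n)$), there exists $n_{m(\ell)}\in\Z_{>0}$ such that for every $n\geq n_{m(\ell)}$ the finite extension $\K_n^{(m(\ell))}/\K_n$ is of bounded degree, independent of $n$. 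Put $\L_{0,n}:=\K_n^{(m(\ell))}$ and $\C_n:=\Gal(\L_{0,n}/\k_n)$.

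\textbf{Step 2: Enough free summands in the $T$-units with $T=\emptyset$.} Apply Theorem \ref{liber} (or equivalently Theorem \ref{constante-liberte}) to the extension $\L_{0,n}/\k_n$ with $T=\emptyset$. Since $[\k_n:\Q]=[\k:\Q]\,p^n$ grows with $n$, while $|\C_n|$ is bounded, the signature contribution $r_1(\k_n)=[\k:\Q]\,p^n$ outpaces the constant-size error terms, so the integer $\A$ of Remark \ref{remaA0} becomes negative. Hence for $n$ sufficiently large, the free rank $t_n$ of $\fq_p\otimes\O_{\L_{0,n}}^\times$ as an $\fq_p[\C_n]$-module satisfies $t_n\geq s|\C_n|$, which is exactly what is required to run Lemma \ref{lemme-technique} and the Kummer/Chebotarev construction of the sets $\SS_i$ in the proof of Theorem \ref{maintheorem} with $T=\emptyset$.

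\textbf{Step 3: Construction of $\SS_i$ and verification of (i), (ii).} Choose $m\geq n_{m(\ell)}$ large enough that the preceding inequality holds. Applying the Chebotarev construction of Section \ref{chebotarev-consequences} at level $\K_m$, we obtain $s$ positive-density sets $\SS_i$ of primes of $\O_{\k_m}$ such that for any choice $S=\{\p_1,\dots,\p_s\}$ with $\p_i\in\SS_i$ one has, by Lemma \ref{lemme-technique1}, the splitting
\[
\big(\GG_S(\K_m)\big)^{\pel}\simeq_\G \C_0/p\;\oplus\;(\fq_p)^{\oplus s},
\]
where $\C_0=\Cl(\K_m)$ (which is trivial or FPF under $\sigma$ by hypothesis). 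Propagating along $\K_\infty/\K_m$ using Fukuda's stabilization criterion as in Section \ref{casl=2}, the isomorphism persists for every $n\geq m$; this yields (i). For (ii), one reruns the contradiction at the end of the proof of Theorem \ref{maintheorem}: any $\sigma$-uniform quotient $\Gamma$ of $\GG_S(\K_n)$ with $\gamchap$ supported at $S$ would force $G=\Gamma/\gamchap$ to act trivially on $\gamchap/\Phi(\gamchap)$ (because the $\fq_p[G]$-module $\I(S,\F)\simeq(\fq_p)^{\oplus s}$ carries trivial $G$-action and surjects onto $\gamchap/\Phi(\gamchap)$ via Proposition \ref{analytic-arithmetic}), contradicting the FPMF hypothesis.

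\textbf{Main obstacle.} The essential difficulty compared to the $\ell=2$ case is controlling the tower up to the $m(\ell)$-th derived step: for $\ell>2$ the quotient $G$ need no longer be abelian, so Lemma \ref{non-ram} (which identified $\F=\L\cap \K_S^{ab}$ in the involution case) does not apply, and one genuinely needs Greenberg's conjecture at every intermediate level $\K_n^{(i)}$, $i\leq m(\ell)-1$, to secure the bounded-degree stabilization in Step 1. Once that is in hand, the rest is a mechanical adaptation of the proof of Theorem \ref{cyclo}.
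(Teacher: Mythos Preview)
Your proposal is correct and follows essentially the same approach as the paper. The paper itself gives no detailed proof of this corollary beyond the sentence ``One can then apply the strategy of the section \ref{casl=2} to obtain'' the statement; your three steps (iterated Greenberg stabilization of the $m(\ell)$-step tower, the $T=\emptyset$ application of Theorem \ref{liber} exploiting the growth of $[\k_n:\Q]$, and the Chebotarev/Fukuda argument feeding into the FPMF contradiction of Theorem \ref{maintheorem}) are exactly that strategy spelled out, and your identification of the main obstacle --- that Lemma \ref{non-ram} fails for $\ell>2$ because $G$ need not be abelian, forcing Greenberg at each intermediate derived level --- matches the paper's discussion preceding the corollary.
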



\end{document}